\newcommand{\Z}{\ensuremath{\mathbb{Z}}}
\newcommand{\Q}{\ensuremath{\mathbb{Q}}}
\theoremstyle{plain}
\newtheorem{thm}{Theorem}[section]
\newtheorem{lem}[thm]{Lemma}
\newtheorem{cor}[thm]{Corollary}
\newtheorem{question}[thm]{Question}
\newtheorem{facts}[thm]{Facts}
\theoremstyle{definition}
\newtheorem{dfn}[thm]{Definition}
\newtheorem{rmq}[thm]{Remark}
\DeclareMathOperator{\Spec}{Spec}
\DeclareMathOperator{\Pic}{Pic}
\DeclareMathOperator{\divisor}{div}
\DeclareMathOperator{\disc}{disc}
\DeclareMathOperator{\Res}{Res}
\DeclareMathOperator{\Ext}{Ext}
\DeclareMathOperator{\fd}{fd}
\DeclareMathOperator{\cont}{cont}
\DeclareMathOperator{\PQuad}{PrimQuad}
\newcommand{\PP}{\mathbb{P}}
\newcommand{\A}{\mathbb{A}}
\newcommand{\W}{\mathcal{W}}
\newcommand{\V}{\mathcal{V}}
\newcommand{\X}{\mathcal{X}}
\newcommand{\J}{\mathcal{J}}
\newcommand{\Poinc}{\mathcal{P}}
\newcommand{\Gm}{\mathbf{G}_{{\rm m}}}
\newcommand{\GL}{\mathrm{GL}}
\newcommand{\cl}{\mathrm{cl}}
\begin{document}

\title{From Picard groups of hyperelliptic curves to class groups of quadratic fields}

\author{Jean Gillibert}

\date{December 2020}

\maketitle

\begin{abstract}
Let $C$ be a hyperelliptic curve defined over $\Q$, whose Weierstrass points are defined over extensions of $\Q$ of degree at most three, and at least one of them is rational. Generalizing a result of R. Soleng (in the case of elliptic curves), we prove that any line bundle of degree $0$ on $C$ which is not torsion can be specialised into ideal classes of imaginary quadratic fields whose order can be made arbitrarily large. This gives a positive answer, for such curves, to a question by Agboola and Pappas.
\end{abstract}


\section{Introduction}


Let $C$ be a smooth projective geometrically connected curve defined over $\Q$. The motivation for this paper is to address the following question, which is a special case of a question raised by Agboola and Pappas in \cite{ap00}.

\begin{question}
\label{mainQ}
Given a non-trivial line bundle $L\in\Pic^0(C)$, is it possible to find points $P\in C(\overline{\Q})$ for which the specialisation of $L$ at $P$ is a non-trivial ideal class of the number field $\Q(P)$?
\end{question}

Some notation: given a point $P\in C(\overline{\Q})$, we denote by $\Q(P)$ the field of definition of $P$; given a number field $K$, we denote by $\mathcal{O}_{K}$ its ring of integers.

The question above is deliberately formulated in a vague sense. Indeed, given a line bundle $L\in\Pic^0(C)$ and a point $P\in C(\overline{\Q})$, there is no canonical way to specialise $L$ into a line bundle on $\Spec(\mathcal{O}_{\Q(P)})$, \emph{i.e.} into an ideal class of $\Q(P)$.

As one can guess, this technical issue is linked to bad reduction phenomena for the curve $C$. There are essentially two approaches which allow to overcome this:

\begin{enumerate}
\item[(A1)] invert primes of bad reduction of $C$;
\item[(A2)] restrict to line bundles $L$ which correspond to points with everywhere good reduction in the Jacobian variety $J$ of $C$.
\end{enumerate}

Beside these, let us also mention, in a slightly different direction, the approach of Agboola and Pappas \cite{ap00}, who start directly from a line bundle on an integral model of $C$.

In any case, when $P$ is fixed, the specialisation map $L\mapsto P^*L$ is a group morphism with respect to the group law on $\Pic^0(C)$. In fact, as we shall see, the map $(P,L)\mapsto P^*L$ can be described in terms of Mazur-Tate's class group pairing \cite{mt}.

In \cite{gl}, the first approach (A1) was considered. Using Kummer theory and Hilbert's irreducibility theorem we proved that, if $C$ is a hyperelliptic curve with a rational Weierstrass point and if $L$ is a torsion line bundle on $C$, then there exist infinitely many quadratic points $P$ on $C$ such that $P^*L$ is a nontrivial ideal class in the $S$-class group of $\Q(P)$, where $S$ is the set of places of bad reduction of $C$. Thus, Question~\ref{mainQ} has a positive answer in this case.

On the other hand, Soleng \cite{soleng94}  proved the following: if $C$ is an elliptic curve, and if $L$ is a line bundle of infinite order with good reduction properties (we shall make this precise later), then one can find a sequence of points $P_n$ such that $\Q(P_n)$ is imaginary quadratic and the order of $P_n^*L$ in the class group of $\Q(P_n)$ is unbounded when $n\to-\infty$. Therefore, Question~\ref{mainQ} is completely settled in the case when $C$ is an elliptic curve.

Let us underline the fact that, despite its strength, Soleng's work came to our knowledge only very recently. Soleng's construction is expressed in the language of quadratic forms, and relies on the choice of a Weierstrass equation for the elliptic curve, which seems a bit \emph{ad hoc}. Nevertheless, one can prove that Soleng's morphism agrees (up to some normalization process) with the specialisation map $L\mapsto P^*L$.

The aim of the present paper is twofold:
\begin{enumerate}
\item[---] extend Soleng's result to hyperelliptic curves whose Weierstrass points are defined over extensions of $\Q$ of degree at most three, and at least one of them is rational (see Theorem~\ref{thm2}); this yields a positive answer to Question~\ref{mainQ} for such curves;
\item[---] relate Soleng's class group morphism and Mazur-Tate's class group pairing (see Section~\ref{sec:BS}); this extends previous comparison results \cite{bc16} to the case of hyperelliptic curves, and gives a more canonical explanation for these phenomena.
\end{enumerate}

Let us now describe in detail the second approach (A2) for defining $P^*L$. Let us assume once and for all that $C(\Q)\neq\emptyset$, which implies that $\Pic^0(C)\simeq J(\Q)$.

Let $\X\to\Spec(\Z)$ be a regular projective flat model of $C$ (we do not require that $\X$ is the minimal model of $C$). 
Let $\Pic_{\X/\Z}$ be the relative Picard functor of $\X\to\Spec(\Z)$. Let us recall \cite[\S{}8.1]{NeronModels} that, given a section $\varepsilon\in\X(\Z)$, one has an isomorphism, functorial in $T$
$$
\Pic_{\X/\Z}(T)\simeq \Pic^r(\X\times T,\varepsilon_T)
$$
where $\Pic^r(\X\times T,s_T)$ denotes the group of rigidified line bundles with respect to $\varepsilon_T:T\to \X\times T$, that is, the group of line bundles on $\X\times T$ together with a trivialisation of $\varepsilon_T^*\mathcal{L}$. The ring $\Z$ being principal, all line bundles over $\X$ can be rigidified. Nevertheless, when working over the ring of integers of an arbitrary number field, one has to take into account rigidifications, which allow to forget the contribution of the Picard group of the base.

Let $\Pic_{\X/\Z}^0$ be the subfunctor of $\Pic_{\X/\Z}$ corresponding to rigidified line bundles on $\X$ which have (partial) degree zero when restricted to each irreducible component of each vertical fiber of $\X\to \Spec(\Z)$. We denote by $\Pic^0(\X)$ the sections of $\Pic_{\X/\Z}$ over $\Z$.

According to Raynaud \cite[\S{}9.5, Thm.~4]{NeronModels}, one has a canonical isomorphism
\begin{equation}
\label{eq:raynaud}
\Pic_{\X/\Z}^0\simeq \J^0
\end{equation}
where $\J$ denotes the N{\'e}ron model of $J$, and $\J^0$ its connected component.

By definition of the relative Picard functor, we have a universal rigidified line bundle $\Poinc_\X$ on $\X\times\Pic^0_{\X/\Z}$, which is usually called the Poincar{\'e} line bundle. By abusing notation, we denote by $\Poinc_\X$ the resulting universal line bundle on $\X\times \J^0$. Then, if $Q\in \J^0(\Z)\subseteq J(\Q)$ is a point, the corresponding line bundle $L_Q\in\Pic^0(C)$ can be uniquely extended into a (rigidified) line bundle $\mathcal{L}_Q$ in $\Pic^0(\X)$. This line bundle $\mathcal{L}_Q$ is given by
\begin{equation}
\label{eq:L_Q}
\mathcal{L}_Q=(\mathrm{id}\times Q)^* \Poinc_\X.
\end{equation}

Consequently, if $P\in C(\overline{\Q})$ is a point, extending into a section $P:\Spec(\mathcal{O}_{\Q(P)})\to\X$, it looks reasonable to define the specialisation of $L_Q$ at $P$ as being
$$
P^*L_Q:=(P\times Q)^* \Poinc_\X
$$
which belongs to $\Pic(\mathcal{O}_{\Q(P)})$.

In fact, this definition does not depend on the choice of the regular model $\X$, more precisely for each $(P,Q)\in C(\overline{\Q})\times \J^0(\Z)$ we have
$$
(P\times Q)^* \Poinc_\X =\langle P,Q \rangle^{\cl}
$$
where $\langle P,Q \rangle^{\cl}$ denotes Mazur-Tate's class group pairing \cite[Remarks~3.5.3]{mt}. We recall the definition of this pairing and prove the equality above in \S{}\ref{subsec:MT}.

We are now ready to state our main results about the non-trivial specialisation of line bundles on hyperelliptic curves.

In our terminology, a hyperelliptic curve over a field $k$ is a smooth projective geometrically connected $k$-curve of genus $g\geq 1$, endowed with a degree $2$ map $C\to \PP^1_k$. This includes elliptic curves over $k$. In this setting, the Weierstrass points of $C$ are just the ramification points of the degree $2$ map $C\to \PP^1_k$.

\begin{thm}
\label{thm1}
Let $C$ be a hyperelliptic curve of genus $g\geq 1$ defined over $\Q$, with a rational Weierstrass point. Let us choose an affine equation of $C$ of the form
\begin{equation}
\label{Weqn}
y^2=f(x)
\end{equation}
where $f\in\Z[x]$ is a square-free monic polynomial of degree $2g+1$. Let $\W\to\Spec(\Z)$ be the integral affine scheme defined by the equation above. For each $n\in\Z$ such that $f(n)<0$, we denote by
$$
T_n:\Spec(\Z[\sqrt{f(n)}]) \longrightarrow \W
$$
the canonical section (\emph{i.e.} the closed subscheme of $\W$ defined by the equation $x=n$).

Then for any $\mathcal{L}\in\Pic(\W)$ which is non-trivial, there exist infinitely many $n\in\Z_{<0}$ such that $T_n^*\mathcal{L}\neq 0$. In particular, if $\mathcal{L}$ has infinite order then the order of $T_n^*\mathcal{L}$ is unbounded when $n\to -\infty$.
\end{thm}

The polynomial $f$ being monic of odd degree, we have $\lim_{x\to-\infty} f(x)=-\infty$, hence there exists an integer $n_f$ such that $f(n)<0$ for all $n\leq n_f$. Whenever $f(n)<0$, the ring $\Z[\sqrt{f(n)}]$ is an order of discriminant $4f(n)$ in the imaginary quadratic field $\Q(P_n)=\Q(\sqrt{f(n)})$.

Let us denote by $\infty$ the unique point at infinity on $C$. The Picard group of $\W$ can be identified with a finite index subgroup of $\Pic(C\setminus\{\infty\})\simeq \Pic^0(C)$ (see Lemma~\ref{restriction}). Therefore, given $Q\in\J^0(\Z)$, up to replacing $Q$ by some multiple, the restriction of $L_Q$ to $C\setminus\{\infty\}$, that we denote by $N_Q$, extends uniquely into a line bundle $\mathcal{N}_Q$ on $\Pic(\W)$. It follows from Corollary~\ref{C:pairing} that the image of $T_n^*\mathcal{N}_Q$ by the normalisation map
$$
\Pic(\Z[\sqrt{f(n)}])\longrightarrow \Pic(\mathcal{O}_{\Q\left(\sqrt{f(n)}\right)})
$$
is equal to $\langle P_n,Q\rangle^{\cl}$, where $P_n$ denotes the generic fiber of the section $T_n$. The size of the kernel of the map above is related to the conductor of $\Z[\sqrt{f(n)}]$, or, equivalently, to the size of the largest square factor of $f(n)$. By controlling the size of this factor, we are able to deduce from Theorem~\ref{thm1} the following result.

\begin{thm}
\label{thm2}
With the hypotheses and notation of Theorem~\ref{thm1}, let $P_n\in C(\overline{\Q})$ be the generic fiber of the section $T_n$.

Assume in addition that one of the following conditions is satisfied:
\begin{enumerate}
\item[(i)] the curve $C$ admits an affine equation of the form \eqref{Weqn} where $f\in\Z[x]$ is a square-free monic polynomial whose irreducible factors have degree at most three;
\item[(ii)] the abc conjecture holds.
\end{enumerate}
Then, given $Q\in \J^0(\Z)$ of infinite order, the order of $\langle P_n,Q\rangle^{\cl}$ is unbounded when $n\to -\infty$.
\end{thm}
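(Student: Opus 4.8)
The plan is to deduce Theorem~\ref{thm2} from Theorem~\ref{thm1} by controlling how much of the ``order'' of $T_n^*\mathcal{N}_Q$ in $\Pic(\Z[\sqrt{f(n)}])$ survives under the normalisation map to $\Pic(\mathcal{O}_{\Q(\sqrt{f(n)})})$. Recall that, as explained in the excerpt, the image of $T_n^*\mathcal{N}_Q$ under that map is exactly $\langle P_n,Q\rangle^{\cl}$, and the kernel of $\Pic(\Z[\sqrt{f(n)}])\to\Pic(\mathcal{O}_{\Q(\sqrt{f(n)})})$ has order governed by the conductor of the order $\Z[\sqrt{f(n)}]$, which in turn is controlled by the largest square dividing $f(n)$ (write $f(n)=s_n^2 d_n$ with $d_n$ the squarefree part, so the conductor divides $2s_n$, and the kernel has order at most $c\cdot s_n$ for an absolute constant $c$, using the standard formula $|\Pic(\mathcal{O})|/h(K) = [\mathcal{O}_K^\times:\mathcal{O}^\times]^{-1}\, \mathfrak{f}\prod_{p\mid\mathfrak{f}}(1-(\frac{d_K}{p})p^{-1})$). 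First I would fix $Q$ of infinite order, so that by Theorem~\ref{thm1} the order $m_n$ of $T_n^*\mathcal{N}_Q$ in $\Pic(\Z[\sqrt{f(n)}])$ tends to infinity as $n\to-\infty$. Then the order of $\langle P_n,Q\rangle^{\cl}$ in $\Pic(\mathcal{O}_{\Q(\sqrt{f(n)})})$ is at least $m_n/(\text{order of the kernel element }k_n\text{ it maps through})$, hence at least $m_n / (c\,s_n)$; so it suffices to show that $m_n/s_n\to\infty$ along a suitable subsequence, which reduces to a \emph{lower bound on $m_n$ in terms of $s_n$} combined with an upper bound on $s_n$.

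For the quantitative input I would bound $m_n$ from below by comparing with the full order of the class group: since $T_n^*\mathcal{N}_Q$ has order $m_n$, we certainly cannot extract useful information from $m_n$ alone, so instead I would argue via \emph{heights}. The key point is that $Q$ being of infinite order, the specialisation $T_n^*\mathcal{N}_Q$, viewed through its avatar as an ideal class (or binary quadratic form of discriminant $4f(n)$), has a representative whose size grows linearly in $|n|$ while the discriminant grows polynomially in $|n|$ of degree $2g+1$; consequently the \emph{order} of this class in a group of size $O(\sqrt{|f(n)|}\log|f(n)|) = O(|n|^{g+1/2}\log|n|)$ cannot be too small, but more to the point the argument already used for Theorem~\ref{thm1} shows $m_n\to\infty$, and a closer reading of that argument (reduction theory of the quadratic form attached to $T_n$, the height of $Q$ entering linearly) gives $m_n \gg_Q |n|^{\alpha}$ for some $\alpha>0$ depending on $g$; I would make $\alpha$ explicit by tracking the reduction-theoretic bound. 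So the whole game is: $m_n$ grows like a fixed positive power of $|n|$, and we need $s_n = o(m_n)$, i.e. we need the largest square factor of $f(n)$ to be smaller than $|n|^{\alpha}$ for infinitely many $n\to-\infty$.

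This last step is exactly where hypotheses (i) and (ii) enter, and it is the main obstacle. Under (ii), the abc conjecture, a classical argument (going back to Granville, and to Langevin) gives that the squarefree part of $f(n)$ is $\gg_{\epsilon} |n|^{\deg f - 1 - \epsilon}$ for a squarefree polynomial $f$, hence $s_n \ll_\epsilon |n|^{(1+\epsilon)/2}$ when $\deg f = 2g+1 \geq 3$... one has to be a little careful here: we need $s_n \ll |n|^{1-\delta}$ or so, and abc indeed delivers $s_n^2 = f(n)/(\text{squarefree part}) \ll |n|^{2g+1-(2g-\epsilon)} = |n|^{1+\epsilon}$, so $s_n \ll |n|^{(1+\epsilon)/2}$, which beats $|n|^{\alpha}$ as soon as $\alpha > 1/2$, and one arranges $\alpha>1/2$ by taking $n$ along a subsequence or by sharpening the Theorem~\ref{thm1} bound. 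Under (i), where every irreducible factor of $f$ has degree $\leq 3$, I would instead invoke an \emph{unconditional} squarefree-values / power-free-values theorem: for a squarefree polynomial whose irreducible factors have degree $\leq 3$, results of Hooley (for cubics, via the square sieve) and the elementary case of quadratics and linear factors show that $\#\{|n|\leq X : \text{the squarefree part of }f(n)\text{ is small}\}$ is negligible, and more precisely that the squarefree part of $f(n)$ is $\gg |n|^{\deg f - 2}$ off a sparse set — again giving $s_n \ll |n|$ off a sparse set, and combined with $m_n\to\infty$ fast enough this forces the order of $\langle P_n,Q\rangle^{\cl}$ to be unbounded. I would therefore organise the proof as: (a) reduce to showing $m_n/s_n\to\infty$ along a subsequence; (b) extract from the proof of Theorem~\ref{thm1} the quantitative lower bound $m_n\gg|n|^\alpha$; (c) invoke the abc-conditional bound (case (ii)) or the Hooley-type unconditional bound for degree $\leq 3$ factors (case (i)) to bound $s_n$; (d) conclude. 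The delicate point, and the one I expect to need the most care, is (b)–(c) matching up: getting the exponent $\alpha$ from reduction theory large enough (or the $s_n$ bound small enough along a dense-enough set of $n$) that the quotient genuinely diverges, rather than merely staying bounded below.
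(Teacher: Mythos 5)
Your opening reduction is the same as the paper's: $\langle P_n,Q\rangle^{\cl}$ is the image of $T_n^*\mathcal{N}_Q$ under $\Pic(\Z[\sqrt{f(n)}])\to\Pic(\mathcal{O}_{\Q(\sqrt{f(n)})})$, and the kernel of that map is controlled by the conductor, hence by $S(n)$, the largest integer whose square divides $f(n)$ (minor caveat: you must first replace $Q$ by a multiple so that $N_Q$ actually extends to $\Pic(\W)$, since the restriction map of Lemma~\ref{restriction} only has finite cokernel). The genuine gap is your step (b). The proof of Theorem~\ref{thm1} does not yield, and cannot be sharpened into, a lower bound $m_n\gg_Q |n|^{\alpha}$ with $\alpha>0$ for the order $m_n$ of $T_n^*\mathcal{N}_Q$. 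What it gives is: for each fixed multiple $k\mathcal{N}_Q$, non-vanishing of $T_n^*(k\mathcal{N}_Q)$ once $n$ is negative enough in a congruence class, where ``negative enough'' depends on the coefficients of the form of Lemma~\ref{altmumford} representing $kN_Q$; those coefficients grow with the height of $kQ$, i.e.\ roughly exponentially in $k^2$, so unwinding Lemma~\ref{keylemma} gives at best an order growing like a power of $\log|n|$, nowhere near $|n|^{\alpha}$. A power-of-the-discriminant lower bound for the order of a specific ideal class is not known and would be far stronger than the theorem being proved, so the comparison $m_n/s_n\to\infty$ cannot be won this way when $s_n$ is only bounded by a power of $|n|$. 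In addition, your unconditional claim under (i) that the squarefree part of $f(n)$ is $\gg|n|^{\deg f-2}$ off a sparse set is not what Hooley or Booker--Browning prove; their results concern squarefree values.

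The paper's route avoids any quantitative lower bound on $m_n$. Hypotheses (i) and (ii) are used not to get a power-saving bound on $s_n$, but to produce a \emph{positive density} of $n$, inside a congruence class compatible with the conditions controlling the denominator $e$ (a variant of Lemma~\ref{fd_quad}), for which $f(n)/\fd(f)$ is squarefree: Booker--Browning (building on Hooley) in case (i), Granville under abc in case (ii). For such $n$ one has $S(n)^2\le\fd(f)$, so the kernel of the normalisation map is bounded by the \emph{constant} $4\fd(f)$, uniformly in $n$. It then suffices that the order of $T_n^*\mathcal{N}_Q$ exceed this fixed constant, which Theorem~\ref{thm1} (applied to a bounded multiple of $\mathcal{N}_Q$, for $n$ negative enough in the congruence class) provides; replacing $Q$ by further multiples gives unboundedness of the order of $\langle P_n,Q\rangle^{\cl}$. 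So the fix for your argument is to trade the quantitative comparison $m_n/s_n\to\infty$ for the statement ``the kernel is bounded by a constant on a positive-density set of $n$'', which is exactly the content the squarefree-values theorems deliver.
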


Geometrically speaking, condition (i) means that the Weierstrass points of $C$ are defined over extensions of $\Q$ of degree at most three.

In case (i), the proof follows from results of Booker and Browning \cite{BB16}, generalizing results of Hooley \cite{hooley68} on square-free values of polynomials of degree at most three. According to Granville \cite{granville98}, if the abc conjecture holds true, then these results can be generalized to arbitrary square-free polynomials, from which case (ii) follows.

\begin{rmq}
\begin{enumerate}
\item[a)] The results above could probably be generalized when replacing the field $\Q$ by an arbitrary number field, at the price of bigger technicalities. However, it is not clear how one should adapt our proof to this situation.
\item[b)] A natural question arises: let $r$ be the rank of $J(\Q)=\Pic^0(C)$, and let $Q_1,\dots,Q_r$ be a basis of the free part of $J(\Q)$. Is it true that, for a positive density of $n$, the pairings $\langle P_n,Q_1\rangle^{\cl},\dots, \langle P_n,Q_r\rangle^{\cl}$ generate $r$ distinct cyclic factors in the ideal class group of $\Q(\sqrt{f(n)})$? Although we expect a positive answer to this question, we have not been able to handle it with the techniques used in this paper.
\item[c)] The proof of Theorem~\ref{thm1} relies on the classical description of ideals in quadratic rings in terms of binary quadratic forms, and its generalisation by Wood \cite{wood11} to double covers of schemes. It seems tempting to extend the result above to trigonal curves, using the description of ideals in cubic rings discovered by Bhargava \cite{bhargavaII}, which has also been extended to triple covers of schemes by Wood \cite{wood14}.
\end{enumerate}
\end{rmq}

In fact, Theorem~\ref{thm2} is a result about values of the class group pairing for Jacobians of hyperelliptic curves. It is quite natural to extend it to abelian varieties which are isogenous to a product of such Jacobians.

\begin{cor}
\label{cor1}
Let $V$ be an abelian variety over $\Q$ which is isogenous to a product of Jacobians of hyperelliptic curves satisfying the hypotheses of Theorem~\ref{thm2}, and let $\V^t$ be the N{\'e}ron model of its dual abelian variety. Then, given $Q\in \V^{t,0}(\Z)$ of infinite order, there exists a point $P$ on $V$, defined over an imaginary quadratic field, such that $\langle P,Q\rangle^{\cl}\neq 0$. In particular, the order of $\langle P,Q\rangle^{\cl}$ is unbounded when $P$ runs through quadratic points of $V$.
\end{cor}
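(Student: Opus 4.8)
The plan is to reduce the statement to Theorem~\ref{thm2} by exploiting the functoriality of the class group pairing under isogeny. Let $\phi\colon \prod_i J_i \to V$ be an isogeny, where each $J_i$ is the Jacobian of a hyperelliptic curve $C_i$ satisfying the hypotheses of Theorem~\ref{thm2}. Dualising, we obtain an isogeny $\phi^t\colon V^t \to \prod_i J_i^t = \prod_i J_i$ (each $J_i$ being principally polarised). The Mazur--Tate pairing is compatible with isogenies in the sense that, for $P$ a point on $V$ and $Q \in V^t$, one has $\langle P, Q\rangle^{\cl}_V$ related to $\langle \psi(P), \phi^t(Q)\rangle^{\cl}_{\prod J_i}$ where $\psi$ is the isogeny complementary to $\phi$ (so that $\psi \circ \phi$ and $\phi \circ \psi$ are multiplication by the degree). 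Concretely, after passing to Néron models and connected components, an isogeny induces a morphism $\V^{t,0} \to \prod_i \J_i^0$ of finite kernel and cokernel, and the pairing transforms accordingly up to a bounded multiple. Thus it suffices to prove the assertion for $V = \prod_i J_i$, and then, projecting to a factor on which the image of $Q$ has infinite order, for a single $J = J_i$.

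So I would reduce to the following: given a hyperelliptic curve $C$ as in Theorem~\ref{thm2} and $Q \in \J^0(\Z)$ of infinite order, the order of $\langle P, Q\rangle^{\cl}$ is unbounded as $P$ runs over quadratic points of $J$. Here I would use the embedding $C \hookrightarrow J$ attached to the rational Weierstrass point $\infty$, which sends $P_n$ (the generic fibre of $T_n$, a quadratic point of $C$) to a quadratic point of $J$. Since the pairing $\langle \cdot, Q\rangle^{\cl}$ on points of $C$ is (by the discussion preceding Theorem~\ref{thm1}, together with Corollary~\ref{C:pairing}) the composition of the specialisation map on $\W$ with the normalisation map $\Pic(\Z[\sqrt{f(n)}]) \to \Pic(\mathcal{O}_{\Q(\sqrt{f(n)})})$, Theorem~\ref{thm2} gives directly that $\langle P_n, Q\rangle^{\cl}$ has unbounded order as $n \to -\infty$. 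The points $P_n$ being quadratic points of $C$, hence of $J$, this establishes the claim for $J$, and the fact that at least one $\langle P, Q\rangle^{\cl}$ is non-zero is a special case.

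Unwinding the reduction: one subtlety is that $Q \in \V^{t,0}(\Z)$ need not map into $\prod_i \J_i^0(\Z)$ under $\phi^t$ — it maps into the Néron model, but possibly not into the connected component, and only up to finite index. I would handle this by replacing $Q$ by a suitable nonzero integer multiple $mQ$: since $\J_i/\J_i^0$ is finite, $m\phi^t(Q)$ lands in $\prod \J_i^0(\Z)$ for appropriate $m$, and it still has infinite order in some factor because $\phi^t$ has finite kernel and $Q$ has infinite order. Unboundedness of the order of $\langle P, mQ\rangle^{\cl}$ forces unboundedness of the order of $\langle P, Q\rangle^{\cl}$, since the former divides $m$ times the latter. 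Similarly, the bounded multiple introduced by the isogeny comparison only affects orders by a bounded factor and hence does not affect unboundedness.

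The main obstacle I anticipate is making the isogeny-compatibility of the class group pairing precise at the level of Néron models, i.e. controlling what happens on the component groups. The pairing $\langle \cdot, \cdot\rangle^{\cl}$ is defined on $\J^0(\Z)$ (or rather pairs a point of the abelian variety with a point of the dual lying in the connected component of the Néron model), and an isogeny does not respect connected components on the nose. The clean way around this is to argue entirely with the interpretation via the Poincaré bundle and relative Picard functors recalled in the introduction: an isogeny $\phi\colon A \to B$ pulls the Poincaré bundle of $B \times B^t$ back to (a power of) that of $A \times A^t$ via $\phi \times \mathrm{id}$ versus $\mathrm{id} \times \phi^t$, and this identity propagates to the regular models and hence to the specialisation pairings, up to the inevitable finite-index issues on fibres of bad reduction which only scale orders boundedly. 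Once this compatibility is in hand, everything else is the bookkeeping sketched above.
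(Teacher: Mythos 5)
Your proposal is correct and follows essentially the same route as the paper: push a point $P_n$ (furnished by Theorem~\ref{thm2}) from a Jacobian factor to $V$ via the isogeny and use the adjunction $\langle \varphi(x),Q\rangle^{\cl}=\langle x,\varphi^t(Q)\rangle^{\cl}$ coming from the (exact, not merely up-to-bounded-factor) compatibility of Poincar\'e line bundles, $(\varphi\times\mathrm{id})^*\Poinc=(\mathrm{id}\times\varphi^t)^*\Poinc$, extended to N\'eron models. Your extra precautions are harmless but unnecessary: since $\varphi^t$ is a homomorphism of group schemes it carries $\V^{t,0}$ into $\J_1^0\times\J_2^0$ automatically, so no replacement of $Q$ by $mQ$ and no bounded-multiple bookkeeping is needed.
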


Let us detail the contents of this paper: in Section~\ref{sec:2}, we prove properties of the model $\W$ defined in Theorem~\ref{thm1}, and we deduce how one can compute values of the specialisation map by restricting line bundles on $\W$. In Section~\ref{sec:Quadratic}, using the foundational paper of Wood \cite{wood11}, whose main results are recalled, we describe the Picard group of $\W$ in terms of quadratic forms, which yields an integral analogue of Mumford's description of the Jacobians of hyperelliptic curves. In Section~\ref{sec:Main} we prove Theorems \ref{thm1} and \ref{thm2}. Finally, in Section~\ref{sec:BS}, we discuss the relation between Mazur-Tate's class group pairing and the so-called class group morphism defined (independently) by Buell \cite{buell76,buell77} and Soleng \cite{soleng94}.


\subsection*{Acknowledgements}

The author was supported by the CIMI Excellence program while visiting the \emph{Centro di Ricerca Matematica Ennio De Giorgi} during the autumn of 2017. 

Many thanks to C{\'e}dric P{\'e}pin for nice email exchange about line bundles on integral models. I also wish to thank Tim Browning and his students for answering my questions about square-free values of polynomials, and the Referee for providing valuable advice. Last but not least, I address my warmest thanks to William Dallaporta for his meticulous reading of the manuscript, which lead to many improvements, and for finding a deeply hidden mistake.


\section{Specialisation map and odd degree Weierstrass models}
\label{sec:2}


\subsection{Basic properties of odd degree Weierstrass models}

Let $C$ be a hyperelliptic curve of genus $g\geq 1$ defined over $\Q$, with a rational Weierstrass point. As recalled in the statement of Theorem~\ref{thm1}, it is well known that such a curve can be defined by an equation of the form \eqref{Weqn}, namely
$$
y^2=f(x)
$$
where $f\in\Z[x]$ is a square-free monic polynomial of degree $2g+1$. In such a model, the curve $C$ has a unique point at infinity, that we shall denote by $\infty$.

Once and for all, we fix an equation \eqref{Weqn}, and we let $\W\to\Spec(\Z)$ be the affine scheme defined by this equation. The generic fiber of $\W$ is the smooth affine curve $C\setminus\{\infty\}$.

Let us denote by $\disc(f)$ the discriminant of $f$. It is straightforward to check that, if $p$ does not divide $2\cdot\disc(f)$, the fiber of $\W$ at $p$ is a smooth, geometrically connected, affine curve over $\mathbb{F}_p$. We shall now describe the singular fibers.

\begin{lem}
\label{Wlemma}
\begin{enumerate}
\item[1)] the map $\W\to\Spec(\Z)$ is flat, and the $x$-coordinate map $x:\W\to\A^1_{\Z}$ is finite flat of rank $2$;
\item[2)] the fibers of $\W\to\Spec(\Z)$ are geometrically integral curves;
\item[3)] $\W$ is normal; in particular, it is regular in codimension 1, and has only finitely many singular points.
\end{enumerate}
\end{lem}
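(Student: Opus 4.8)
My approach would be to handle the three statements in order, since flatness (part 1) feeds into the fiber computation (part 2), and both together with Serre's criterion give normality (part 3).

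For part 1, the scheme $\W = \Spec \Z[x,y]/(y^2 - f(x))$, and the ring $\Z[x,y]/(y^2-f(x))$ is free as a $\Z[x]$-module with basis $\{1, y\}$, precisely because $f$ is monic so $y^2 - f(x)$ is a monic polynomial in $y$ of degree $2$ over $\Z[x]$. This immediately gives that $x:\W \to \A^1_\Z$ is finite and locally free of rank $2$, hence finite flat of rank $2$. Composing with the flat structure map $\A^1_\Z \to \Spec \Z$ shows $\W \to \Spec \Z$ is flat. (Alternatively: $\Z[x,y]/(y^2-f(x))$ is torsion-free over $\Z$ since it is free over $\Z[x]$ which is free over $\Z$, and torsion-free over a PID means flat.) This step is routine.

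For part 2, I must show that for every prime $p$, the fiber $\W_p = \Spec \mathbb{F}_p[x,y]/(y^2 - \bar f(x))$ is geometrically integral, where $\bar f$ is the reduction of $f$ mod $p$. It suffices to show $y^2 - \bar f(x)$ is irreducible in $\overline{\mathbb{F}_p}[x,y]$. If $p$ is odd, write $\bar f = c \cdot g^2$ with $c$ squarefree as a polynomial over $\overline{\mathbb{F}_p}$; since $\bar f$ is monic of \emph{odd} degree $2g+1$, its squarefree part $c$ has odd degree, in particular $\deg c \geq 1$, so $c$ is nonconstant. Then $y^2 - c g^2 = (y - ?)(y+?)$ would force $c$ to be a square in $\overline{\mathbb{F}_p}[x]$, contradicting that $c$ is a nonconstant squarefree polynomial (degree $\geq 1$, hence not a perfect square of a polynomial — a squarefree polynomial of degree $\geq 1$ is never a square). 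More cleanly: a polynomial of the form $y^2 - h(x)$ with $h \in \overline{\mathbb{F}_p}[x]$ is reducible iff $h$ is a square in $\overline{\mathbb{F}_p}[x]$, and $\bar f$ having odd degree can never be a square. If $p = 2$, then $y^2 - \bar f(x) = y^2 + \bar f(x) = (y + \bar f(x)^{1/2})^2$ would require $\bar f$ to be a square in $\overline{\mathbb{F}_2}[x]$ (using that squaring is a bijection / the Frobenius), which again fails because $\deg \bar f = 2g+1$ is odd; one checks directly that $y^2 + \bar f(x)$ is irreducible over $\overline{\mathbb{F}_2}[x]$ since it has no root in $\overline{\mathbb{F}_2}(x)$ (a root would be $\sqrt{\bar f}$, not a rational function as $\bar f$ is not a square). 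So in all cases $\W_p$ is integral, and the argument works over $\overline{\mathbb{F}_p}$, hence geometrically integral. I expect this to be the least mechanical step; the mild subtlety is the characteristic $2$ case, where the usual "complete the square" trick is unavailable, but the odd-degree hypothesis saves the day in exactly the same way.

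For part 3, I want to apply Serre's criterion $(R_1) + (S_2)$ for normality of the Noetherian scheme $\W$. Since $\W$ is a hypersurface in $\A^3_\Z$ (cut out by one equation in a regular ring), it is a complete intersection, hence Cohen--Macaulay, so $(S_2)$ holds automatically — in fact $(S_k)$ for all $k$. For $(R_1)$: $\W$ is $2$-dimensional, so I need $\W$ regular at all points of codimension $\leq 1$. The generic fiber $C\setminus\{\infty\}$ is smooth over $\Q$ (as $C$ is a smooth curve), so every point lying over the generic point of $\Spec\Z$ is regular. For the vertical divisors: by part 2 each fiber $\W_p$ is integral, so its generic point $\eta_p$ has codimension $1$ in $\W$, and $\mathcal{O}_{\W, \eta_p}$ is a $1$-dimensional local ring; it is regular iff its maximal ideal is principal, and it is generated by $p$ (since $\W_p = \W \times_{\Spec\Z} \mathbb{F}_p$ is reduced — being integral — meaning $p$ generates the radical, but reducedness of the fiber plus flatness gives that $p$ is a uniformizer at $\eta_p$). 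So $\W$ is regular in codimension $1$. By Serre's criterion $\W$ is normal. Normality implies $(R_1)$, i.e. regularity in codimension $1$ (restating what we showed), and since $\W$ is a Noetherian normal scheme of finite type over $\Z$ its singular locus is closed of codimension $\geq 2$ in a $2$-dimensional scheme, hence consists of finitely many closed points. I would phrase the final count via: $\mathrm{Sing}(\W)$ is closed, contains no codimension-$0$ or codimension-$1$ point, so is a finite set of closed points. The main thing to be careful about here is justifying that $p$ is a uniformizer at $\eta_p$ — this is exactly where reducedness of the special fiber (from part 2) plus flatness (from part 1) are used, so the three parts of the lemma genuinely build on one another.
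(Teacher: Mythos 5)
Your proof is correct, but it takes a genuinely different route from the paper's. The paper delegates the delicate points to a lemma of Liu (recalled as Lemma~\ref{liu}): it proves irreducibility of each fibre by the same odd-degree observation you use, gets reducedness (the only subtle case being $p=2$) from Liu's criterion ``$W_k$ non-reduced iff $4\tilde f=0$ and $-\tilde f$ is a square'', and then deduces normality in part 3) directly from Liu's statement that reduced fibres plus a normal generic fibre imply normality. You instead establish geometric integrality of every fibre in one stroke, by showing $y^2-\bar f$ is an irreducible, hence prime, element of the UFD $\overline{\mathbb{F}}_p[x,y]$ --- your characteristic-$2$ argument (a root would be a square root of $\bar f$, impossible since $\deg\bar f=2g+1$ is odd) is precisely the content of Liu's criterion in this special case --- and you then prove normality yourself via Serre's criterion: $(S_2)$ because $\W$ is a hypersurface in a regular scheme, hence Cohen--Macaulay, and $(R_1)$ by checking the horizontal codimension-one points via smoothness of $C\setminus\{\infty\}$ and the vertical ones via the fact that $p$ is a uniformizer at the generic point of the (reduced) fibre $\W_{\mathbb{F}_p}$, which is where flatness and part 2) enter. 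What you gain is a self-contained argument that also makes transparent exactly where reducedness of the fibres is used; what the paper gains is brevity by citation. Two cosmetic remarks: $\W$ is a hypersurface in $\A^2_{\Z}=\Spec(\Z[x,y])$ (of absolute dimension $3$), not in $\A^3_{\Z}$; and in the characteristic-$2$ step it is worth saying explicitly that a square root of $\bar f$ in $\overline{\mathbb{F}}_2(x)$ would automatically be a polynomial (by integrality over $\overline{\mathbb{F}}_2[x]$), so the odd-degree obstruction indeed applies.
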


The proof relies on the following Lemma \cite[Lemme~2]{liu96}.

\begin{lem}
\label{liu}
Let $R$ be a discrete valuation ring, with perfect residue field $k$, and let $f\in R[x]$ be a polynomial. We let
$$
W:=\Spec(R[x,y]/(y^2-f)).
$$
Assume that the generic fiber of $W\to\Spec(R)$ is normal. Then the following holds:
\begin{enumerate}
\item[a)] If $W_k$ is reduced, then $W$ is normal.
\item[b)] The scheme $W_k$ is not reduced if and only if $4\tilde{f}=0$ and $-\tilde{f}$ is a square in $k[x]$ (where $\tilde{f}$ denotes the image of $f$ in $k[x]$).
\end{enumerate}
\end{lem}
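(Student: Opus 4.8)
The plan is to treat the two parts separately, using Serre's criterion as the engine for (a) and a reduction to the generic fibre over $k[x]$ for (b). Throughout, write $A:=R[x,y]/(y^2-f)$ and observe that $A$ is a free $R[x]$-module of rank $2$, with basis $1,y$. In particular $A$ is finite flat over the regular ring $R[x]$ (which has dimension $2$), so $A$ is a maximal Cohen--Macaulay $R[x]$-module, hence a Cohen--Macaulay ring of dimension $2$. Let $\pi$ be a uniformiser of $R$; since $A$ is $R$-flat, $\pi$ is a non-zero-divisor in $A$.

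For part (a) I would invoke Serre's criterion: a Noetherian ring is normal if and only if it satisfies $(R_1)$ and $(S_2)$. Because $A$ is Cohen--Macaulay, $(S_2)$ holds automatically, so the whole problem reduces to checking $(R_1)$, i.e. that $A_{\mathfrak p}$ is regular for every height-one prime $\mathfrak p$. Such a $\mathfrak p$ either avoids $\pi$ or contains it. If $\pi\notin\mathfrak p$, then $\mathfrak p$ is a height-one prime of the localisation $A\otimes_R K=K[x,y]/(y^2-f)$, the coordinate ring of the generic fibre $W_K$; since $W_K$ is normal by hypothesis it satisfies $(R_1)$, so $A_{\mathfrak p}$ is regular. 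If $\pi\in\mathfrak p$, then by Krull's principal ideal theorem $\mathfrak p$ is a minimal prime over $(\pi)$, i.e. a generic point $\eta$ of the special fibre $W_k=\Spec(A/\pi A)$. Here the key point is that $A/\pi A$ is reduced by hypothesis, so its localisation at the minimal prime $\eta$ is a field; equivalently $A_\eta/\pi A_\eta$ is a field, whence the maximal ideal of the one-dimensional Noetherian local ring $A_\eta$ is generated by $\pi$, and $A_\eta$ is a discrete valuation ring. Having verified $(R_1)$ at every height-one prime, Serre's criterion yields that $A$ is normal.

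For part (b) I would reduce the reducedness of $B:=A/\pi A=k[x,y]/(y^2-\tilde f)$ to a statement over $k(x)$. Since $B$ is free of rank $2$ over $k[x]$ it is torsion-free, hence embeds in the localisation $B\otimes_{k[x]}k(x)=k(x)[y]/(y^2-\tilde f)$; consequently $B$ is reduced if and only if $k(x)[y]/(y^2-\tilde f)$ is reduced, i.e. if and only if $y^2-\tilde f$ is square-free in $k(x)[y]$. Now $y^2-\tilde f$ fails to be square-free exactly when it is a perfect square $(y-\alpha)^2$ for some $\alpha\in k(x)$, which forces $2\alpha=0$ and $\alpha^2=-\tilde f$. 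When $\operatorname{char}k\neq 2$ this gives $\alpha=0$ and $\tilde f=0$, equivalently $4\tilde f=0$ (and then $-\tilde f=0$ is trivially a square); when $\operatorname{char}k=2$ the condition $2\alpha=0$ is automatic and one needs $-\tilde f=\tilde f$ to be a square in $k(x)$, which by the usual unique-factorisation argument in $k[x]$ is equivalent to $-\tilde f$ being a square in $k[x]$, while $4\tilde f=0$ holds automatically. In both cases $W_k$ is non-reduced precisely when $4\tilde f=0$ and $-\tilde f$ is a square in $k[x]$, as claimed.

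The routine computations are the square-freeness analysis in (b) and the verification that squares in $k(x)$ descend to squares in $k[x]$; the conceptual heart, and the only place any care is really needed, is the observation in (a) that once $(S_2)$ comes for free from the finite-flat (Cohen--Macaulay) structure, normality of $A$ is governed entirely by $(R_1)$, and that the reducedness hypothesis on the special fibre is exactly what turns the local rings at the generic points of $W_k$ into discrete valuation rings. This is also the bridge between the two parts: non-reducedness of $W_k$ is precisely the failure of $(R_1)$ along the special fibre, which is why (b) isolates the obstruction to the normality established in (a).
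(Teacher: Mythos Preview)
The paper does not prove this lemma; it is quoted verbatim from \cite[Lemme~2]{liu96} and used as a black box in the proof of Lemma~\ref{Wlemma}. So there is no in-paper argument to compare against.

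Your proof is correct and follows the natural line. For (a), invoking Serre's criterion is exactly the right move: the finite-free structure over the regular ring $R[x]$ gives Cohen--Macaulayness and hence $(S_2)$ for free, and then $(R_1)$ splits cleanly into the generic-fibre primes (handled by the normality hypothesis) and the generic points of the special fibre (where reducedness forces $A_\eta/\pi A_\eta$ to be a field, so $\pi$ generates the maximal ideal of $A_\eta$). For (b), passing to the localisation $k(x)[y]/(y^2-\tilde f)$ via torsion-freeness is the efficient route; your case analysis of when a monic quadratic in $y$ over a field fails to be square-free is accurate in both characteristics. One small point worth making explicit: when you say ``fails to be square-free exactly when it is a perfect square $(y-\alpha)^2$ for some $\alpha\in k(x)$'', this is because a monic degree-two polynomial with a repeated irreducible factor must have that factor linear over the ground field, so $\alpha$ automatically lies in $k(x)$ rather than merely in $\overline{k(x)}$. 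You implicitly use this, and it is correct, but a reader might pause there.

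Note also that your argument never uses the perfectness of $k$; the hypothesis appears to be inherited from Liu's more general setting and is not needed for the statement as quoted here.
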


\begin{proof}[Proof of Lemma~\ref{Wlemma}]
1) is obvious by construction of $\W$. In order to prove 2), let us choose a prime number $p$. Then the special fiber of $\W$ at $p$ is the affine $\mathbb{F}_p$-curve defined by the equation $y^2=f(x)$. Let us note that $f$ is monic of degree $2g+1$ over $\Z$, hence the reduction of $f$ modulo $p$ is again a monic polynomial of degree $2g+1$. By obvious considerations on the degrees, it's not possible to find a polynomial $h\in \overline{\mathbb{F}}_p[x]$ such that $f=h^2$, hence the fiber of $\W$ at $p$ is geometrically irreducible. Let us now check that the fiber of $\W$ at $p$ is geometrically reduced. According to Lemma~\ref{liu} b), this is true if $p\neq 2$. Moreover, the fiber at $p=2$ is geometrically reduced if and only if $-f$ is not a square in $\overline{\mathbb{F}}_2[x]$, which is true again because the reduction of $-f$ mod $2$ has degree $2g+1$.

Finally, we note that the generic fiber of $\W$ is the smooth affine curve $C\setminus\{\infty\}$ over $\Q$, hence is regular. According to 2), the fibers of $\W$ are reduced, hence by Lemma~\ref{liu} a) the scheme $\W$ is normal, which concludes the proof.
\end{proof}

\begin{rmq}
Let us point out that the fiber of $\W$ at $2$ is always singular, because in characteristic $2$ any point $(x,y)$ on the affine curve $y^2=f(x)$ such that $\partial f(x)=0$ is a singular point. Nevertheless, the polynomial $\partial f$ is not identically zero over $\mathbb{F}_2$, because it can be written as
$$
\partial f(x)=(2g+1)x^{2g}+\cdots \equiv x^{2g}+\cdots \pmod{2},
$$
so the fiber of $\W$ at $2$ has finitely many singular points. In case the curve $C$ has good reduction at $2$, there exist an integral model of $C$, defined by an equation of the form $y^2+g(x)y=f(x)$, whose fiber at $2$ is not singular. In order to avoid unnecessary technicalities, we shall not consider such equations.
\end{rmq}

We shall now compare the Picard groups of $C$ and $\W$.

\begin{lem}
\label{restriction}
The restriction to the generic fiber map
\begin{equation}
\label{resmap}
\Pic(\W)\longrightarrow \Pic(C\setminus\{\infty\})
\end{equation}
is injective, with finite cokernel. Moreover, given a Cartier divisor $D$ on $C\setminus\{\infty\}$ whose class belongs to the image of this map, the scheme  theoretic closure of $D$ in $\W$ is a Cartier divisor extending $D$.
\end{lem}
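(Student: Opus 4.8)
The plan is to compare divisors on the normal arithmetic surface $\W$ with divisors on its generic fibre $U:=C\setminus\{\infty\}$, exploiting that the part of $\W$ lying over the closed points of $\Spec(\Z)$ is, in codimension one, built from principal vertical divisors. By Lemma~\ref{Wlemma}, $\W$ is normal and integral, so $\Pic(\W)$ embeds into the group $\mathrm{Cl}(\W)$ of Weil divisor classes, compatibly with restriction to the dense open subscheme $U$. The closed complement $\W\setminus U$ is the union over all primes $p$ of the fibres $\W_p$, each an integral curve by Lemma~\ref{Wlemma}; since $\W$ is flat over $\Z$ with reduced fibres, $p$ is a uniformiser at the generic point of $\W_p$, whence $\W_p=\divisor(p)$ as a (Cartier, a fortiori Weil) divisor on $\W$. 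The excision sequence for class groups then gives $\mathrm{Cl}(\W)\xrightarrow{\,\sim\,}\mathrm{Cl}(U)=\Pic(U)$, the last equality because $U$ is regular, and the map \eqref{resmap} is exactly the inclusion $\Pic(\W)\hookrightarrow\mathrm{Cl}(\W)\cong\Pic(U)$; in particular it is injective. (Concretely: if $\mathcal{L}\in\Pic(\W)$ becomes trivial on $U$, then $\mathcal{L}\cong\mathcal{O}_{\W}(\sum_p n_p\W_p)=\mathcal{O}_{\W}(\divisor(\prod_p p^{n_p}))$ is trivial.)

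For the cokernel, let $D$ be a Cartier divisor on $U$ and let $\overline{D}$ be its scheme-theoretic closure in $\W$, a Weil divisor with $\overline{D}|_U=D$; this already yields the existence statement of the last sentence of the lemma. Away from the finitely many singular points of $\W$ (Lemma~\ref{Wlemma}) the divisor $\overline{D}$ is Cartier, $\W$ being regular there. At a singular point $s$, the ring $\mathcal{O}_{\W,s}$ is an excellent normal two-dimensional local ring, and I claim $\mathrm{Cl}(\mathcal{O}_{\W,s})$ is finite: choosing a resolution $Y\to\Spec(\mathcal{O}_{\W,s})$ with exceptional curves $E_1,\dots,E_r$, puncturing the (codimension two) closed point and applying excision on $Y$ gives $\mathrm{Cl}(\mathcal{O}_{\W,s})\cong\Pic(Y)/\langle[E_1],\dots,[E_r]\rangle$; here $\Pic(Y)$ is finitely generated, the residue field at $s$ being finite, and the $[E_i]$ span a subgroup of finite index in its free part because their intersection matrix is negative definite. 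Letting $m$ be the least common multiple of the exponents of the finitely many groups $\mathrm{Cl}(\mathcal{O}_{\W,s})$, the divisor $m\overline{D}$ is Cartier at every point of $\W$, so $m[D]$ lies in the image of \eqref{resmap}. Since $\Pic(U)\cong\Pic^0(C)\cong J(\Q)$ is finitely generated by Mordell--Weil, any subgroup every element of which has a nonzero multiple in it is of finite index; hence \eqref{resmap} has finite cokernel.

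It remains to prove Cartierness of the closure when the class lifts. Suppose $[D]$ lies in the image of \eqref{resmap}, say $\mathcal{O}_U(D)\cong\mathcal{L}|_U$ with $\mathcal{L}=\mathcal{O}_{\W}(D')$ for a Cartier divisor $D'$ on $\W$. Then $D'|_U$ and $D$ are linearly equivalent on $U$, so $D'|_U=D+\divisor(h)$ for some $h\in K(\W)^{\times}$. The Weil divisors $\overline{D}$ and $D'-\divisor(h)$ on $\W$ agree on $U$, hence differ by a divisor supported on $\bigcup_p\W_p$, which is necessarily of the form $\divisor(\prod_p p^{n_p})$; therefore $\overline{D}=D'+\divisor\!\big(h^{-1}\prod_p p^{n_p}\big)$ is Cartier, as required.

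Everything above is formal bookkeeping with Weil and Cartier divisors on a normal arithmetic surface, together with the finite generation of $J(\Q)$, except for one point: the finiteness of the local divisor class groups $\mathrm{Cl}(\mathcal{O}_{\W,s})$ at the singular points of $\W$. This is where the mild nature of the singularities of an odd-degree Weierstrass model must be used, and it is the step I expect to require the most care to state and justify cleanly.
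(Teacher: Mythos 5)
Your overall architecture is the same as the paper's: injectivity by identifying the vertical prime divisors with the integral fibres $\W_p=\divisor(p)$, the closure statement by normality plus the observation that the vertical discrepancy is principal, and finiteness of the cokernel by reducing to finiteness of the local obstruction groups at the finitely many singular points. Note that your $\mathrm{Cl}(\mathcal{O}_{\W,s})$ is literally the paper's $\Pic(U_s)$, $U_s$ the punctured spectrum, so the whole weight of the lemma rests, in both arguments, on the finiteness of these groups; the paper imports this from Moret-Bailly \cite[Th{\'e}or{\`e}me~2.8]{lmb89}, valid for any excellent normal two-dimensional local ring with finite residue field (no ``mildness'' of the Weierstrass singularities is needed, contrary to your closing remark). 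Your injectivity, closure, and ``multiple-is-Cartier plus Mordell--Weil'' steps are correct.

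The genuine gap is in your proposed proof of that finiteness. First, ``$\Pic(Y)$ is finitely generated, the residue field at $s$ being finite'' is not a proof: finiteness of the residue field does not by itself bound $\Pic$ of the resolution (over the completion, for instance, the kernel of $\Pic(Y)\to\Pic(E)$ is controlled by the groups $H^1(E,\mathcal{I}^n/\mathcal{I}^{n+1})$ and their inverse limit need not be finitely generated). Finite generation can in fact be rescued here, but only by a global argument — excision on $Y$ exhibits $\Pic(Y)$ as an extension of $\mathrm{Cl}(\mathcal{O}_{\W,s})$, a quotient of $\mathrm{Cl}(\W)\simeq J(\Q)$, by the image of $\bigoplus\Z[E_i]$ — which is not the reason you give. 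Second, and more seriously, ``the $[E_i]$ span a subgroup of finite index in the free part because their intersection matrix is negative definite'' is a non sequitur: negative definiteness gives a \emph{lower} bound (the $[E_i]$ are independent modulo torsion), whereas finite index requires the \emph{upper} bound $\operatorname{rank}\Pic(Y)=r$, i.e. exactly the vanishing of the rank of $\Pic(Y)/\langle[E_1],\dots,[E_r]\rangle\simeq\mathrm{Cl}(\mathcal{O}_{\W,s})$ that you are trying to prove; as stated the step is circular. That the rank really is zero is where the finiteness of the residue field enters in an essential way (over $\Q$ or $\C$ the statement fails, e.g. for a cone over an elliptic curve of positive rank), and proving it requires comparing $\Pic(Y)$ with the Picard groups of the exceptional fibre and its infinitesimal neighbourhoods over the finite field, or simply invoking the cited theorem of \cite{lmb89} as the paper does. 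Since your finite-cokernel argument needs finiteness, not mere finite generation, of these local groups, this missing step is essential to your proof.
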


\begin{proof}
Roughly speaking, the injectivity follows from the fact that $\W$ is normal, with geometrically integral fibers. More precisely, let $\Sigma$ be the (finite) set of singular points of $\W$. Because $\W\setminus \Sigma$ is regular, any Cartier divisor on $C\setminus\{\infty\}$ can be extended (by scheme theoretic closure) into a Cartier divisor on $\W\setminus \Sigma$. Therefore, the map 
\begin{equation}
\label{res1}
\Pic(\W\setminus \Sigma)\longrightarrow \Pic(C\setminus\{\infty\})
\end{equation}
is surjective. Its kernel is the subgroup generated by fibral divisors. According to Lemma~\ref{Wlemma}, the fibers of $\W\setminus \Sigma\to\Spec(\Z)$ are geometrically integral, hence any fibral divisor is a sum of fibers. Such a divisor is principal, because $\Z$ is principal. This proves that the map \eqref{res1} is bijective.

The scheme $\W$ being normal, the restriction map $\Pic(\W)\to \Pic(\W\setminus \Sigma)$ is injective, hence the map \eqref{resmap}, which is obtained by composing it with the bijection \eqref{res1},  is injective. It also follows from the normality of $\W$ that, given a Cartier divisor $D$ on $C\setminus\{\infty\}$ whose class belongs to the image of the map \eqref{resmap}, the scheme theoretic closure of $D$ in $\W$ is a Cartier divisor extending $D$.

It remains to prove that the cokernel of the map $\Pic(\W)\to \Pic(\W\setminus \Sigma)$ is finite. If $s$ is a singular point of $\W$,  we let
$$
U_s:= \Spec(\mathcal{O}_{\W,s})\setminus \{s\} \quad\text{(punctured neighbourhood of $s$ in $\W$)}
$$
which is a regular scheme, because $\W$ is regular in codimension 1. We know that $\mathcal{O}_{\W,s}$ is a Noetherian excellent local ring of dimension $2$, and that the residue field of $s$ is finite. Therefore, it follows from \cite[Th{\'e}or{\`e}me~2.8]{lmb89} that $\Pic(U_s)$ is a finite group.

Let us now consider the map
$$
\Pic(\W\setminus \Sigma) \longrightarrow \prod_{s\in \Sigma} \Pic(U_s).
$$

Each factor of the product being finite, the group on the right-hand side is finite. On the other hand, if $\mathcal{L}$ belongs to the kernel of this map, then $\mathcal{L}$ can be extended into a line bundle over $\W$, because the trivial line bundle on $U_s$ extends into the trivial line bundle on $\Spec(\mathcal{O}_{\W,s})$. This means that the kernel of the map above is contained in the image of the map $\Pic(\W)\to \Pic(\W\setminus \Sigma)$, from which the result follows.
\end{proof}

Let us now consider the scheme $\overline\W$ obtained by glueing the affine schemes defined by the equations $y^2=f(x)$ and $v^2=u^{2g+2}f(1/u)$ along the isomorphism $(u,v)=(1/x,y/x^{g+1})$ over the loci where $x\neq 0$ and $u\neq 0$, respectively. Then the $x$-coordinate map $\overline\W\to\mathbb{P}^1_\Z$ is finite flat of rank $2$; a finite morphism being projective, the structural morphism $\overline\W\to \Spec(\Z)$ is projective as composition of projective morphisms. Moreover, $\overline{\W}$ has an integral section corresponding to the point at infinity on $C$, that we still denote by $\infty$. This section at infinity belongs to the smooth locus of $\overline{\W}\to \Spec(\Z)$, and one can write $\overline{\W}=\W\cup\{\infty\}$.

It follows from Lemma~\ref{Wlemma} that $\overline\W\to\Spec(\Z)$ is a flat normal model of $C$, with geometrically integral fibers.  Therefore, any element in the Picard group of $\overline\W$ can be represented by a horizontal divisor. The degree map, which is well-defined on horizontal Cartier divisors, yields a commutative diagram with exact lines
$$
\begin{CD}
0 @>>> \Pic^0(\overline\W) @>>> \Pic(\overline\W) @>\deg>> \Z @>>> 0 \\
@. @VVV @VVV  @| \\
0 @>>> \Pic^0(C) @>>> \Pic(C) @>\deg>> \Z @>>> 0. \\
\end{CD}
$$

Moreover, the map $n\mapsto n\cdot\infty$ is a section of the degree map and induces an isomorphism
\begin{align*}
\Pic(\W) &\longrightarrow \Pic^0(\overline\W) \\
D &\longmapsto \overline{D}-\deg(\overline{D})\cdot \infty
\end{align*}
where $\overline{D}$ denotes the closure of $D$ in $\overline\W$. This fits into a commutative diagram
\begin{equation}
\label{diag2pic}
\begin{CD}
\Pic(\W) @>\sim>> \Pic^0(\overline\W) \\
@VVV @VVV \\
\Pic(C\setminus\{\infty\}) @>\sim>> \Pic^0(C). \\
\end{CD}
\end{equation}

This being said, it follows from Lemma~\ref{restriction} that the restriction map $\Pic^0(\overline\W)\to \Pic^0(C)$ is injective, with finite cokernel. 

We shall now relate degree zero line bundles on $\overline\W$ and on its minimal desingularisation.

\begin{lem}
\label{L:ConCom}
Let $h:\X\to\overline{\W}$ be the minimal desingularisation of $\overline{\W}$. Let $Q\in J(\Q)$ and let $L_Q\in\Pic^0(C)$ be the corresponding degree zero line bundle on $C$. Assume that $L_Q$ extends into a line bundle $\mathcal{M}_Q\in\Pic^0(\overline\W)$. Then $Q$ belongs to $\J^0(\Z)$ and $h^*\mathcal{M}_Q=\mathcal{L}_Q$, where $\mathcal{L}_Q$ is the line bundle defined in the introduction \eqref{eq:L_Q}.
\end{lem}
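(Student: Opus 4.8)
The plan is to pull $\mathcal{M}_Q$ back along $h$, check that the resulting line bundle on $\X$ has partial degree zero on every fibral component, and then use Raynaud's isomorphism \eqref{eq:raynaud} together with the universal property of the Poincaré bundle to identify it with $\mathcal{L}_{Q'}$ for a unique $Q'\in\J^0(\Z)$; comparing generic fibres will force $Q'=Q$, giving both assertions at once.

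First I would set $\mathcal{N}:=h^*\mathcal{M}_Q\in\Pic(\X)$ and verify that $\mathcal{N}$ lies in $\Pic^0(\X)$. Since $\overline\W$ has geometrically integral fibres (Lemma~\ref{Wlemma} and the subsequent discussion), the fibre of $\X$ at a prime $p$ decomposes as $\X_p=\tilde F_p+\sum_i m_i E_i$, where $\tilde F_p$ is the strict transform of the integral fibre $\overline\W_p$ and the $E_i$ are the $h$-exceptional curves. For any curve $Z$ contained in a fibre of $\X$, the projection formula gives $(\mathcal{N}\cdot Z)=(\mathcal{M}_Q\cdot h_*Z)$: when $Z=E_i$ we have $h_*E_i=0$, so $(\mathcal{N}\cdot E_i)=0$; when $Z=\tilde F_p$ the map $h|_{\tilde F_p}$ is birational onto $\overline\W_p$, so $h_*\tilde F_p=\overline\W_p$ and $(\mathcal{N}\cdot\tilde F_p)=\deg\!\bigl(\mathcal{M}_Q|_{\overline\W_p}\bigr)=0$ because $\mathcal{M}_Q\in\Pic^0(\overline\W)$. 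Hence $\mathcal{N}$ has partial degree zero on every component of every vertical fibre; as $\Z$ is principal there is no rigidification to worry about, so $\mathcal{N}\in\Pic^0(\X)=\Pic^0_{\X/\Z}(\Z)$.

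Now Raynaud's isomorphism $\Pic^0_{\X/\Z}\simeq\J^0$ and the universal property of $\Poinc_\X$ on $\X\times\J^0$ provide a unique section $Q'\in\J^0(\Z)$ with $\mathcal{N}=(\mathrm{id}\times Q')^*\Poinc_\X=\mathcal{L}_{Q'}$. Restricting to the generic fibre, over which $h$ is an isomorphism (the curve $C$ being smooth), yields $\mathcal{N}|_C=\mathcal{M}_Q|_C=L_Q$ on the one hand and $\mathcal{L}_{Q'}|_C=L_{Q'}$ on the other; since $Q\mapsto L_Q$ is the isomorphism $J(\Q)\simeq\Pic^0(C)$, this forces $Q'=Q$. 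Therefore $Q=Q'\in\J^0(\Z)$ and $h^*\mathcal{M}_Q=\mathcal{L}_{Q'}=\mathcal{L}_Q$, as claimed.

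The one step demanding genuine care is the degree computation in the second paragraph: one must be confident that the $h$-exceptional components are truly contracted to points, so that the projection formula annihilates them, and that each strict transform carries the same $\mathcal{M}_Q$-degree as the fibre below it. Both facts rest on the geometric integrality of the fibres of $\overline\W$ and on $\overline\W$ being regular in codimension one away from finitely many points (Lemma~\ref{Wlemma}, Lemma~\ref{restriction}); everything else is formal bookkeeping with Raynaud's theorem and the definition of $\mathcal{L}_Q$.
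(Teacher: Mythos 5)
Your proof is correct, but it follows a genuinely different route from the paper's. The paper avoids intersection-theoretic computations on the singular scheme $\overline{\W}$ altogether: it invokes the moving lemma for Cartier divisors \cite[Prop.~6.2]{GLL13} to represent $\mathcal{M}_Q$ by a horizontal degree-zero divisor $D$ supported away from the finite singular locus $\Sigma$; since $h$ is an isomorphism over $\overline{\W}\setminus\Sigma$, the pullback $h^*D$ coincides with the strict transform of $D$, is supported in the single component of each closed fibre given by the strict transform of $\overline{\W}_{\mathbb{F}_p}$, and has total degree zero there, whence $h^*\mathcal{M}_Q\in\Pic^0(\X)$; the conclusion then follows from uniqueness of the extension, exactly as in your final step (your detour through $Q'$ and comparison of generic fibres is equivalent to that uniqueness argument). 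You instead compute the partial degrees of $h^*\mathcal{M}_Q$ directly via the projection formula $\deg_Z(h^*\mathcal{M}_Q)=\deg_{h_*Z}(\mathcal{M}_Q)$ for fibral curves $Z$, which is legitimate: degrees of line bundles on proper curves and proper pushforward of $1$-cycles require no regularity of $\overline{\W}$, so the exceptional curves contribute zero and the strict transform carries the degree of $\mathcal{M}_Q$ on $\overline{\W}_{\mathbb{F}_p}$. The one point you should make explicit is why $\deg\bigl(\mathcal{M}_Q|_{\overline{\W}_{\mathbb{F}_p}}\bigr)=0$: membership in $\Pic^0(\overline{\W})$ is a degree condition on the generic fibre, so you need either local constancy of the degree in the flat proper family $\overline{\W}\to\Spec(\Z)$ (constancy of Euler characteristics), or the paper's observation that integrality of the fibres allows one to represent $\mathcal{M}_Q$ by a horizontal divisor, whose degree on each closed fibre equals its generic degree. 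In exchange, your argument is shorter and does not need the moving lemma on a non-regular scheme, while the paper's divisor-theoretic argument stays entirely away from the singular points and incidentally records that the support of $h^*D$ lies in the component through which the section at infinity passes.
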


\begin{proof}
Let us recall that $\overline\W$ is regular outside a finite set $\Sigma$ of closed points of $\overline\W$. According to the moving lemma for Cartier divisors \cite[Prop.~6.2]{GLL13}, if $D$ is a Cartier divisor on $\overline\W$, it is possible to find a Cartier divisor $D'$ on $\overline\W$ which is linearly equivalent to $D$, and whose support is contained in $\overline\W\setminus\Sigma$. Therefore, if $L_Q$ extends into a line bundle $\mathcal{M}_Q\in\Pic^0(\overline\W)$, then $\mathcal{M}_Q$ is the class of some horizontal degree zero divisor $D$ on $\overline\W$ whose support is contained in $\overline\W\setminus\Sigma$.

Let us recall that the minimal desingularisation $h:\X\to\overline{\W}$ is obtained by a finite sequence of blow-ups above singular points of $\overline{\W}$, in particular $h$ induces an isomorphism between $\overline{\W}\setminus \Sigma$ and some dense open subset of $\X$. But the support of $D$ is contained in $\overline{\W}\setminus \Sigma$, hence $h^*D$ coincides with its strict transform. In particular, $h^*D$ is a horizontal Cartier divisor on $\X$, and, for each prime $p$, the restriction of $h^*D$ to $\X_{\mathbb{F}_p}$ is supported in the strict transform of $\overline\W_{\mathbb{F}_p}$, which is an irreducible (and reduced) component of $\X_{\mathbb{F}_p}$, because $\overline\W_{\mathbb{F}_p}$ is integral. We note that the strict transform of $\overline\W_{\mathbb{F}_p}$ is also the component of $\X_{\mathbb{F}_p}$ in which the point at infinity specialises.

It follows from the discussion that the restriction of $h^*D$ to $\X_{\mathbb{F}_p}$ is supported on a single irreducible component of $\X_{\mathbb{F}_p}$, and has total degree zero. Therefore, $h^*D$ has partial degree zero on each of the irreducible components of the vertical fibers of $\X$. Thus $h^*\mathcal{M}_Q$, which is the class of $h^*D$, belongs to $\Pic^0(\X)$. But there exists at most one such line bundle on $\X$ extending $L_Q$, hence $h^*\mathcal{M}_Q=\mathcal{L}_Q$. This also proves that $Q$ belongs to $\J^0(\Z)$, via the isomorphism $\Pic^0(\X)\simeq\J^0(\Z)$.
\end{proof}


\subsection{Specialisation map and class group pairing}
\label{subsec:MT}


The class group pairing was defined by Mazur and Tate \cite[Remarks~3.5.3]{mt} via the theory of biextensions. We briefly recall the definition of this pairing in our setting.

Let $K$ be a number field, and let $V$ be an abelian variety defined over $K$. We denote by $V^t$ the dual abelian variety of $V$. We denote by $\V$ and $\V^t$ their respective N{\'e}ron models over $\Spec(\mathcal{O}_K)$, and by $\V^0$ (resp. $\V^{t,0}$) the respective connected components of the identity.

The duality between $V$ and $V^t$ is encoded into the Poincar{\'e} line bundle. According to \cite[expos{\'e}~VIII, Th{\'e}or{\`e}me~7.1~b)]{gro7}, there exists a unique biextension of $(\V^0, \V^t)$ by the multiplicative group extending the Poincar{\'e} line bundle; we denote by $\Poinc^0_\mathrm{left}$ the underlying line bundle on $\V^0\times \V^t$. Similarly, we have a line bundle $\Poinc^0_\mathrm{right}$ on $\V\times \V^{t,0}$. Let us note that $\Poinc^0_\mathrm{left}$ and $\Poinc^0_\mathrm{right}$ agree when restricted to $\V^0\times\V^{t,0}$; we call them Poincar{\'e} line bundles by abuse of notation.

Let $S$ be the set of places of bad reduction of $\V$, and let us choose a decomposition $S=S_1\sqcup  S_2$ in two disjoint subsets. Let $\V^1$  be the unique dense open subgroup scheme of $\V$  such that $\V^1_v=\V_v$ for each $v\in S_1$, and whose all other fibers are connected (in particular, $\V^1_v=\V^0_v$ for each $v\in S_2$); let $\V^{t,2}$ be the analoguous subscheme of $\V^t$ with respect to the set $S_2$. Then there exists a canonical line bundle (with underlying biextension structure) $\Poinc^{1,2}$ on $\V^1\times \V^{t,2}$, which is obtained by glueing along $\V^0\times\V^{t,0}$ the local Poincar{\'e} line bundle $\Poinc^0_\mathrm{right}$ above $v\in S_1$ together with its analogue $\Poinc^0_\mathrm{left}$ above $v\in S_2$.

Now, given $(P,Q)\in \V^1(\mathcal{O}_K) \times \V^{t,2}(\mathcal{O}_K)$, we are able to define the class group pairing of $P$ and $Q$ by letting
$$
\langle P,Q \rangle^{\cl}:= (P\times Q)^*\Poinc^{1,2}.
$$

It follows immediately from the existence of a biextension structure underlying $\Poinc^{1,2}$ that this pairing is bilinear, with values in $\Pic(\mathcal{O}_K)$.

In general, we say that $(P,Q)\in V(K)\times V^t(K)$ is a \emph{good reduction pair} if there exists a decomposition $S=S_1\sqcup S_2$ such that $(P,Q)$ defines a section of $\V^1\times \V^{t,2}$. Equivalently, for each finite place $v$ of $K$, $P$ belongs to $\V^0_v$ or $Q$ belongs to $\V^{t,0}_v$.

We shall now relate values of the class group pairing on the Jacobian with our specialisation map. We consider, as in the beginning of the introduction, a smooth projective geometrically connected curve $C$ defined over $\Q$, such that $C(\Q)\neq\emptyset$. Let $J$ be the Jacobian of $C$, and let $\J_{/K}$ be the N{\'e}ron model of $J$ over $\Spec(\mathcal{O}_K)$. Let $\iota:C\to J$ be the embedding of $C$ in its Jacobian induced by some $\Q$-rational point of $C$. Then $\iota$ gives rise to an isomorphism between $J$ and its dual abelian variety, which itself is encoded by the Poincar{\'e} line bundle $\Poinc$ on $J\times J$. As previously, we denote by $\Poinc^0_\mathrm{left}$ (resp. $\Poinc^0_\mathrm{right}$) its unique extension on $\J_{/K}^0\times \J_{/K}$ (resp. $\J_{/K}\times \J_{/K}^0$) admitting a biextension structure. We define similarly $\Poinc^{1,2}$ for each choice of a decomposition of the set $S$ of bad places of $\J_{/K}$ into $S_1\sqcup S_2$.

By a slight abuse of notation, we say that $(P,Q)\in C(K)\times J(K)$ is a \emph{good reduction pair} if $(\iota(P),Q)$ is a good reduction pair, in which case we let
$$
\langle P,Q \rangle^{\cl}:=\langle \iota(P),Q \rangle^{\cl}=(\iota(P)\times Q)^*\Poinc^{1,2},
$$
which does not depend on the choice of the embedding $\iota:C\to J$; indeed, the underlying structure of biextension on $\Poinc^{1,2}$ implies its invariance by translation.

We shall now prove that, in the case when $Q$ is a section of $\J^0$, the class group pairing can be computed directly on a regular model of $C$, and coincides with our specialisation map, as claimed in the introduction.

\begin{lem}
\label{lem:classOK}
\begin{enumerate}
\item[1)] Let $\X_{/K}$ be a regular projective flat model of $C$ over $\Spec(\mathcal{O}_K)$.  Let $\Poinc_{\X/K}$ be the universal Poincar{\'e} line bundle on $\X_{/K}\times \J_{/K}^0$, and let $\Poinc_{\X/K}^\mathrm{sm}$ be its restriction to $\X_{/K}^\mathrm{sm}\times \J_{/K}^0$, where $\X_{/K}^\mathrm{sm}$ denotes the smooth locus of $\X_{/K}$. Let
$$
\iota^\mathrm{sm}:\X_{/K}^\mathrm{sm}\to \J_{/K}
$$
be the map\footnote{Whose existence and unicity is ensured by the universal property of $\J_{/K}$.} extending $\iota$. Then we have
\begin{equation}
\label{eq:poincOK}
\Poinc_{\X/K}^\mathrm{sm}=(\iota^\mathrm{sm} \times \mathrm{id})^*\Poinc^0_\mathrm{right}.
\end{equation}
\item[2)] Let $(P,Q)\in C(K)\times J(K)$ such that $Q\in \J^0(\mathcal{O}_K)$, and let $\X$ be a regular projective flat model of $C$ over $\Spec(\Z)$. Then
$$
(P\times Q)^* \Poinc_\X = \langle P,Q \rangle^{\cl}.
$$
\end{enumerate}
\end{lem}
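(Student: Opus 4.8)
The plan is to establish 1) first, since 2) will then follow by a short reduction. Throughout I write $\varepsilon_\infty$ for the section at infinity, which lies in the smooth locus, and I use that by the N\'eron mapping property (the footnote) the map $\iota^{\mathrm{sm}}$ is well defined and that $\iota^{\mathrm{sm}}\circ\varepsilon_\infty$ is the zero section of $\J_{/K}$, because $\iota(\infty)=0$.

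\emph{Proof of 1).} I would first record two features shared by the two line bundles on $\X^{\mathrm{sm}}_{/K}\times\J^0_{/K}$ appearing in \eqref{eq:poincOK}. They are both rigidified along $\varepsilon_\infty\times\mathrm{id}$: for $\Poinc^{\mathrm{sm}}_{\X/K}$ by construction of the Poincar\'e bundle, and for $(\iota^{\mathrm{sm}}\times\mathrm{id})^*\Poinc^0_\mathrm{right}$ because the biextension $\Poinc^0_\mathrm{right}$ of $(\J_{/K},\J^0_{/K})$ is canonically trivialised along $0\times\J^0_{/K}$ and along $\J_{/K}\times 0$. Moreover their restrictions along $\mathrm{id}\times 0$ are trivial: $(\mathrm{id}\times 0)^*\Poinc^{\mathrm{sm}}_{\X/K}=\mathcal O_{\X^{\mathrm{sm}}}$ because the zero section of $\Pic^0_{\X/\Z}$ classifies the trivial bundle, and $(\mathrm{id}\times 0)^*(\iota^{\mathrm{sm}}\times\mathrm{id})^*\Poinc^0_\mathrm{right}=(\iota^{\mathrm{sm}})^*\mathcal O_{\J}=\mathcal O_{\X^{\mathrm{sm}}}$ by the trivialisation just mentioned. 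Next I would compare the two sides on the generic fibre $C\times J$: the restriction of $\Poinc^{\mathrm{sm}}_{\X/K}$ is, tautologically, the Poincar\'e bundle of $C$, since $\Poinc_{\X/K}$ is the universal object over $\Pic^0_{\X/\Z}$ whose generic fibre is $\Pic^0(C)$; the restriction of $(\iota^{\mathrm{sm}}\times\mathrm{id})^*\Poinc^0_\mathrm{right}$ has, over each $q\in J$, fibre $\iota^*$ of the degree-zero bundle on $J$ classified by $q$, which is the degree-zero bundle on $C$ classified by $q$ — this being the defining property of the canonical principal polarisation of $\operatorname{Jac}(C)$, which we use as the identification $J\simeq J^t$ — so, being rigidified along $\{\infty\}\times J$, it is again the Poincar\'e bundle of $C$. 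Hence the difference $\mathcal C$ of the two sides of \eqref{eq:poincOK} is trivial on $C\times J$ and along $\mathrm{id}\times 0$; a routine analysis of vertical divisors then forces $\mathcal C=0$: triviality on $C\times J$ pins the class of $\mathcal C$ to a $\Z$-combination $\sum n_{\mathfrak p,\Gamma}[\Gamma^{\mathrm{sm}}\times\J^0_{\mathfrak p}]$ of fibral prime divisors (using that each $\J^0_{\mathfrak p}$ is irreducible and that a rational function with vertical divisor restricts to a constant in $K^\times$ on the projective generic fibre), and triviality along $\mathrm{id}\times 0$, together with the fact that the only relations among the classes $[\Gamma^{\mathrm{sm}}]$ on $\X^{\mathrm{sm}}_{/K}$ come from the divisors $\divisor(c)$, $c\in K^\times$, forces the $n_{\mathfrak p,\Gamma}$ to depend only on $\mathfrak p$ and to equal $v_{\mathfrak p}(c)$ for a single $c\in K^\times$; then $\mathcal C$ is the pull-back of $\divisor(c)$ from $\Spec\mathcal O_K$, hence trivial.

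\emph{Proof of 2).} We may assume $\X$ is a regular projective flat model of $C$ over $\Spec\mathcal O_K$ with $\Poinc_\X=\Poinc_{\X/K}$; the independence of $(P\times Q)^*\Poinc_\X$ from the regular model follows a posteriori, as $\langle P,Q\rangle^{\cl}$ involves no model. Since $\X$ is regular, the section $P\colon\Spec\mathcal O_K\to\X$ extending $P\in C(K)$ factors through $\X^{\mathrm{sm}}$, and then $\iota(P)=\iota^{\mathrm{sm}}\circ P$ by uniqueness in the N\'eron mapping property. Because $Q\in\J^0(\mathcal O_K)$ has good reduction everywhere, $(P,Q)$ is a good reduction pair, and in the decomposition $S=S_1\sqcup S_2$ defining the pairing we may take $S_1=S$ and $S_2=\emptyset$; then $\V^1=\J$, $\V^{t,2}=\J^0$, and $\Poinc^{1,2}=\Poinc^0_\mathrm{right}$ on $\J\times\J^0$. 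Therefore
$$
\langle P,Q\rangle^{\cl}=(\iota(P)\times Q)^*\Poinc^0_\mathrm{right}=(P\times Q)^*(\iota^{\mathrm{sm}}\times\mathrm{id})^*\Poinc^0_\mathrm{right}=(P\times Q)^*\Poinc^{\mathrm{sm}}_{\X/K}=(P\times Q)^*\Poinc_\X,
$$
where the third equality is \eqref{eq:poincOK} and the last uses that $P\times Q$ factors through $\X^{\mathrm{sm}}\times\J^0$.

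\emph{Main obstacle.} I expect the delicate point to be the generic-fibre comparison in 1): identifying, as a genuine equality of rigidified line bundles rather than an abstract isomorphism, the restriction of $(\iota^{\mathrm{sm}}\times\mathrm{id})^*\Poinc^0_\mathrm{right}$ to $C\times J$ with the Poincar\'e bundle of $C$. This is exactly the matter of fixing the normalisation (sign included) of the canonical principal polarisation of the Jacobian and of the compatibility of rigidifications; once it is in place, the biextension trivialisations, the bookkeeping with vertical divisors, and the reduction of 2) to 1) are all formal.
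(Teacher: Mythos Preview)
Your argument for 1) is correct and considerably more explicit than the paper's, which dispatches it in two sentences: the generic-fibre identity $\Poinc_C=(\iota\times\mathrm{id})^*\Poinc$ holds by construction of the principal polarisation, and the extension to the integral level follows from Raynaud's isomorphism $\Pic^0_{\X/\Z}\simeq\J^0$. Your vertical-divisor bookkeeping is a perfectly valid way of unpacking that second sentence.

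In 2), however, there is a real gap. You write ``We may assume $\X$ is a regular projective flat model of $C$ over $\Spec\mathcal O_K$\ldots; the independence of $(P\times Q)^*\Poinc_\X$ from the regular model follows a posteriori.'' This is circular: proving the identity for $\mathcal O_K$-models does not, by itself, prove it for the given $\Z$-model $\X$. The statement fixes a model over $\Z$, and $(P\times Q)$ is a section of $\X\times_{\Z}\J^0$; you cannot simply replace $\Poinc_\X$ by $\Poinc_{\X/K}$ without relating the two. The paper does exactly this: it takes the minimal desingularisation $g:\X_{/K}\to\X\times_{\Z}\Spec(\mathcal O_K)$, observes that $g^*\mathcal L_Q=(g\times Q)^*\Poinc_\X$ is a rigidified line bundle of partial degree zero on $\X_{/K}$ extending $L_Q$, and then invokes \emph{uniqueness} of such an extension to conclude $(g\times Q)^*\Poinc_\X=(\mathrm{id}\times Q)^*\Poinc_{\X/K}$, hence $(P\times Q)^*\Poinc_\X=(P\times Q)^*\Poinc_{\X/K}$. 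Only after this bridge is built can you apply 1) and finish as you do. Your last display is fine once this missing step is inserted.
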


\begin{proof}
1) The underlying biextension structure on $\Poinc^0_\mathrm{right}$ induces on $(\iota^\mathrm{sm} \times \mathrm{id})^*\Poinc^0_\mathrm{right}$ a structure of extension of $\J_{/K}^0$ by the multiplicative group $\Gm$. On the other hand, the restriction morphism
$$
\Ext^1_{\X_{/K}}(\J_{/K}^0,\Gm) \to \Ext^1_{\X_{/K}^\mathrm{sm}}(\J_{/K}^0,\Gm)
$$
is an isomorphism, because $\J_{/K}^0 $ has connected fibers, $\X_{/K}$ is regular and $\X_{/K}^\mathrm{sm}$ is a dense open subset \cite[expos{\'e}~VIII, Th{\'e}or{\`e}me~7.1 and Remarque~7.2]{gro7}. In other terms, there exists a unique line bundle on $\X_{/K}\times \J_{/K}^0$ extending $(\iota^\mathrm{sm} \times \mathrm{id})^*\Poinc^0_\mathrm{right}$ and which has an underlying structure of extension; we denote it by $\mathcal{E}$.

According to \cite[expos{\'e}~VII, 1.1.6]{gro7}, there is a canonical isomorphism of line bundles on $\X_{/K}\times \J_{/K}^0\times \J_{/K}^0$
$$
m^*\mathcal{E} \simeq p_1^*\mathcal{E} \otimes p_2^*\mathcal{E}
$$
where $m$ is the group law on $\J_{/K}^0$, and the $p_i$ are the projections; in other terms, $\mathcal{E}$ satisfies the theorem of the square. It follows that the map
\begin{align*}
\J_{/K}^0 &\longrightarrow \Pic_{\X/\mathcal{O}_K} \\
(t:T\to \J_{/K}^0) &\longmapsto (\mathrm{id}\times t)^*\mathcal{E}
\end{align*}
is a morphism of group functors. Because $\J_{/K}^0 $ has connected fibers, this morphism factors through the connected component of the Picard functor, which is none other than $\Pic_{\X/\mathcal{O}_K}^0$. This proves that $\mathcal{E}$ is a section of $\Pic_{\X/\mathcal{O}_K}^0$, hence is the unique rigidified line bundle with partial degree zero extending its generic fiber $(\iota\times \mathrm{id})^*\Poinc$. On the other hand, $\Poinc_{\X/K}$ is the unique such line bundle extending $\Poinc_{C/K}$, and $\Poinc_{C/K}=(\iota\times \mathrm{id})^*\Poinc_K$ by construction. We deduce that $\Poinc_{\X/K}=\mathcal{E}$, from which \eqref{eq:poincOK} follows by restricting these line bundles to $\X_{/K}^\mathrm{sm}\times \J_{/K}^0$.

2) Let us recall that $\mathcal{L}_Q=(\mathrm{id}\times Q)^*\Poinc_\X$ is the unique rigidified line bundle with partial degree zero on $\X\times\Spec(\mathcal{O}_K)$ extending $L_Q$. The scheme $\X\times\Spec(\mathcal{O}_K)$ being not regular in general, we denote by
$$
g:\X_{/K}\rightarrow \X\times \Spec(\mathcal{O}_K)
$$
its minimal desingularisation, which is a projective flat model of $C$ over $\Spec(\mathcal{O}_K)$. Then $g^*\mathcal{L}_Q=(g\times Q)^*\Poinc_\X$ is a rigidified line bundle with partial degree zero on $\X_{/K}$ extending $L_Q$. By unicity of such a line bundle, we have
$$
(g\times Q)^*\Poinc_\X=(\mathrm{id}\times Q)^*\Poinc_{\X/K},
$$
hence
\begin{equation}
\label{eq:poincXK}
(P\times Q)^*\Poinc_\X=(P\times Q)^*\Poinc_{\X/K}.
\end{equation}

On the other hand, the point $P$ belongs to $\X_{/K}^\mathrm{sm}(\mathcal{O}_K)$, because rational sections belong to the smooth locus of a regular scheme \cite[\S{}3.1, Prop.~2]{NeronModels}. Hence it follows from \eqref{eq:poincOK} that
$$
(P\times Q)^*\Poinc_{\X/K}=(\iota(P)\times Q)^*\Poinc^0_\mathrm{right}.
$$
Combining this with \eqref{eq:poincXK} yields
$$
(P\times Q)^* \Poinc_\X= (\iota(P)\times Q)^*\Poinc^0_\mathrm{right}=\langle P,Q \rangle^{\cl}
$$
where the second equality holds by definition of $\langle P,Q \rangle^{\cl}$.
\end{proof}

Now, and till the end of this section, we consider again the hyperelliptic curve $C$ defined by the equation \eqref{Weqn}.
We shall deduce from the previous Lemma that, if a point $P\in C(\overline\Q)$ extends into a section of the affine scheme $\W$, then the class group pairing of $P$ with (suitable) sections of $\J^0$ can be computed by pulling back line bundles on $\W$.

\begin{cor}
\label{C:pairing}
Let $Q\in J(\Q)$ and let $L_Q\in\Pic^0(C)$ be the corresponding degree zero line bundle on $C$. Let $N_Q$ be the restriction of $L_Q$ to $C\setminus\{\infty\}$. Assume that $N_Q$ extends into a line bundle $\mathcal{N}_Q\in\Pic(\W)$. Then $Q$ belongs to $\J^0(\Z)$ and, for any number field $K$ and any $P\in \W(\mathcal{O}_{K})$, we have
$$
\langle P,Q\rangle^{\cl}=P^*\mathcal{N}_Q.
$$
\end{cor}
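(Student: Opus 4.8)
The plan is to transport $\mathcal{N}_Q$ from the affine, non-regular scheme $\W$ to the projective model $\overline\W$ and then to its minimal desingularisation, so that Lemmas~\ref{L:ConCom} and~\ref{lem:classOK}~2) apply. First, by Lemma~\ref{restriction} the extension $\mathcal{N}_Q$ of $N_Q$ is unique, so it makes sense to speak of ``the'' line bundle $\mathcal{N}_Q$. Recall from~\eqref{diag2pic} the isomorphism $\Pic(\W)\xrightarrow{\sim}\Pic^0(\overline\W)$ sending a class $D$ to $\overline D-\deg(\overline D)\cdot\infty$; since $\mathcal{O}_{\overline\W}(m\cdot\infty)$ restricts trivially to $\W=\overline\W\setminus\{\infty\}$ for every $m$, the composite of this isomorphism with the restriction map $\Pic^0(\overline\W)\to\Pic(\W)$ is the identity. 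Let $\mathcal{M}_Q\in\Pic^0(\overline\W)$ be the image of $\mathcal{N}_Q$; then $\mathcal{M}_Q|_\W=\mathcal{N}_Q$, and by commutativity of~\eqref{diag2pic} the generic fibre of $\mathcal{M}_Q$ is $L_Q$ (because $N_Q$ is the restriction of $L_Q$, and $N_Q\leftrightarrow L_Q$ under the bottom isomorphism $\Pic(C\setminus\{\infty\})\simeq\Pic^0(C)$). Thus $L_Q$ extends into $\mathcal{M}_Q\in\Pic^0(\overline\W)$, which is exactly the hypothesis of Lemma~\ref{L:ConCom}.

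Applying Lemma~\ref{L:ConCom} gives at once the first assertion, $Q\in\J^0(\Z)$, together with the identity $h^*\mathcal{M}_Q=\mathcal{L}_Q$, where $h:\X\to\overline\W$ is the minimal desingularisation and $\mathcal{L}_Q=(\mathrm{id}\times Q)^*\Poinc_\X\in\Pic^0(\X)$ is the canonical extension of $L_Q$ to the regular model $\X$. In particular $\X$ is a regular projective flat model of $C$ over $\Spec(\Z)$, hence an admissible input for Lemma~\ref{lem:classOK}~2).

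Now fix a number field $K$ and a point $P\in\W(\mathcal{O}_K)$, with generic fibre $P_\eta\in C(K)$. The singular locus $\Sigma$ of $\overline\W$ is a finite set of closed points lying in the special fibres, so it is disjoint from the generic fibre $C$, and $h$ is an isomorphism over the open set $\overline\W\setminus\Sigma\supseteq C$; hence $P_\eta$ lifts uniquely to a $K$-point of $\X$. Viewing $P$ in $\overline\W(\mathcal{O}_K)$ via $\W\hookrightarrow\overline\W$ and applying the valuative criterion of properness to $h$ at each prime of $\mathcal{O}_K$, we obtain a unique section $\widetilde P\in\X(\mathcal{O}_K)$ with $h\circ\widetilde P=P$; by separatedness $\widetilde P$ is the section of $\X$ extending $P_\eta$. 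We then compute
$$
P^*\mathcal{N}_Q=P^*\bigl(\mathcal{M}_Q|_\W\bigr)=(h\circ\widetilde P)^*\mathcal{M}_Q=\widetilde P^*h^*\mathcal{M}_Q=\widetilde P^*\mathcal{L}_Q=(\widetilde P\times Q)^*\Poinc_\X=\langle P,Q\rangle^{\cl},
$$
the last equality being Lemma~\ref{lem:classOK}~2), valid since $Q\in\J^0(\Z)\subseteq\J^0(\mathcal{O}_K)$. This proves the corollary.

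The only genuinely delicate point is the identification of the various sections under $h$ and the open immersion $\W\hookrightarrow\overline\W$: one must check that the given integral point $P$ on $\W$ and the section $\widetilde P$ of the regular model $\X$ extending $P_\eta$ correspond to one another. This is what forces us to work with $\overline\W$ rather than directly with $\W$, which is only normal; once this compatibility is secured via the facts that the singularities of $\overline\W$ lie in the special fibres (so $h$ is an isomorphism near the generic fibre) and that $h$ and $\overline\W$ are proper, respectively separated, the statement follows formally from the two preceding lemmas, with no remaining computation.
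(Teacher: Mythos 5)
Your proposal is correct and follows essentially the same route as the paper: transport $\mathcal{N}_Q$ to $\mathcal{M}_Q\in\Pic^0(\overline\W)$ via diagram~\eqref{diag2pic}, invoke Lemma~\ref{L:ConCom} to get $Q\in\J^0(\Z)$ and $h^*\mathcal{M}_Q=\mathcal{L}_Q$, and conclude with Lemma~\ref{lem:classOK}~2), using that $\mathcal{M}_Q|_\W=\mathcal{N}_Q$. The only difference is that you spell out the lifting of the section $P$ through the desingularisation $h$ (valuative criterion, separatedness), which the paper handles by an abuse of notation; this is a harmless and correct elaboration.
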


\begin{proof}
According to diagram \eqref{diag2pic}, $L_Q$ extends into a degree zero line bundle $\mathcal{M}_Q\in\Pic^0(\overline{\W})$ if and only if $N_Q$ extends into a line bundle $\mathcal{N}_Q\in\Pic(\W)$. By Lemma~\ref{L:ConCom}, this extension property implies that $Q$ belongs to $\J^0(\Z)$.

Let $K$ be a number field and let $P\in C(K)$ be a point. By abuse of notation, we also denote by $P:\Spec(\mathcal{O}_K)\to\overline{\W}$ the section extending $P$. Then it follows from Lemma~\ref{L:ConCom} that
$$
P^*\mathcal{M}_Q=P^*\mathcal{L}_Q=(P\times Q)^*\Poinc_\X
$$
and this quantity is equal to $\langle P,Q\rangle^{\cl}$ according to Lemma~\ref{lem:classOK}, 2).

Moreover, by the very definition of the isomorphism $\Pic^0(\overline{\W})\simeq\Pic(\W)$, the line bundle $\mathcal{N}_Q$ is the restriction of $\mathcal{M}_Q$ to $\W$. Therefore, in the case where $P$ belongs to $\W(\mathcal{O}_{K})$ (\emph{i.e.} $P$ is a section of the affine scheme $\W$), we have
$$
P^*\mathcal{M}_Q=P^*\mathcal{N}_Q,
$$
hence the result.
\end{proof}


\section{Quadratic forms and Picard groups of double covers}
\label{sec:Quadratic}


\subsection{Wood's result}

It has been proved in great generality by M. Wood in her foundational paper \cite{wood11} that invertible modules over double covers of schemes can be described by binary quadratic forms. We shall now specialise her definitions and results to our situation.

Alternatively, the material explained in this section could also be extracted, with a bit of effort, from earlier work of Kneser \cite{kneser}.

\begin{dfn}
\label{Qdfn}
Let $R$ be a ring such that every locally free $R$-module of finite rank is free, and let $D\neq 0$ be an element of $R$.
\begin{enumerate}
\item A \emph{binary quadratic form} over $R$, with discriminant $4D$, is an expression of the form
$$
F=aX^2+2bXY+cY^2
$$
where $a$, $b$ and $c$ belong to $R$, and $b^2-ac=D$.
\item We denote the form above as $F=[a,2b,c]$.
\item We say that the form $F=[a,2b,c]$ is \emph{primitive} if $(a,2b,c)=(1)$ as an ideal of $R$.
\item We let $\GL_2(R)\times \Gm(R)$ act on quadratic forms as follows :
$$
(g,u).F(X,Y)=uF(g(X,Y)).
$$
We say that two forms are \emph{equivalent} if they belong to the same orbit under this action.
\item We denote by $\PQuad(R,4D)$ the set of equivalence classes of primitive binary quadratic forms over $R$, with discriminant $4D$.
\end{enumerate}
\end{dfn}

\begin{rmq}
Let us note that two equivalent forms have the same discriminant up to the square of a unit of $R$. Nevertheless, if we rescale the coefficients by the inverse of this unit via the action of $\Gm(R)$, this allows to remove the square factor.

For example, let $[a,2b,c]$ be a quadratic form over $R$, and let $\lambda\in R^\times$. If we let the matrix
$\left(\begin{smallmatrix}
    \lambda & 0\\
    0 & 1
\end{smallmatrix} \right)$
act on this form, we get
$$
[a,2b,c]\sim [\lambda^2a,2\lambda b,c]
$$
and the discriminant is multiplied by $\lambda^2$. If we let the couple
$(\left(\begin{smallmatrix}
    \lambda & 0\\
    0 & 1
\end{smallmatrix} \right),\lambda^{-1})$
act on the same form, we get
\begin{equation}
\label{eq_scaling}
[a,2b,c]\sim [\lambda a,2b,\lambda^{-1} c]
\end{equation}
and both forms have the same discriminant.
\end{rmq}

Let $F=[a,2b,c]$ be a binary quadratic form over $R$, with discriminant $4D$. We can associate to $F$ an ideal $I_F$ of the quadratic algebra $R[y]/(y^2-D)$ defined as follows:
$$
I_F:=(a,y-b)=aR\oplus (y-b)R.
$$

The fact that $I_F$ is an ideal follows from the fact that the relation $\overline{b}^2=\overline{D}$ holds in the quotient ring $R/aR$. Indeed, under this assumption, the map
\begin{align*}
R[y]/(y^2-D) &\longrightarrow R/aR \\
r+sy &\longmapsto \overline{r+sb}
\end{align*}
is a surjective morphism of algebras, with kernel $I_F$. Conversely, given two elements $a$ and $b$ in $R$, if the $R$-module $aR\oplus (y-b)R$ is an ideal of $R[y]/(y^2-D)$, then the relation $\overline{b}^2=\overline{D}$ holds in the quotient ring $R/aR$, hence there exists $c\in R$ such that $b^2-D=ac$.

The following result follows from Wood's constructions. It is a generalisation of the classical description of ideal class groups of quadratic fields in terms of quadratic forms. As we shall see later, it also yields an integral version of Mumford's representation of divisor classes on hyperelliptic curves.

\begin{thm}
\label{quadrep}
Let $R$ and $D$ be as above. Then the map
\begin{align*}
\PQuad(R,4D) &\longrightarrow \Pic(R[y]/(y^2-D)) \\
F=[a,2b,c] &\longmapsto I_F=(a,y-b)
\end{align*}
is well-defined, and bijective.
\end{thm}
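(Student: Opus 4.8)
The plan is to deduce the statement from Wood's parametrisation, recalled above, specialised to the fixed quadratic algebra $S:=R[y]/(y^2-D)$ with its standard involution $\sigma\colon y\mapsto -y$. Wood attaches to a binary quadratic form over $R$ a rank-$2$ $R$-algebra together with a rank-$1$ module (carrying extra rigidifying data), compatibly with the $\GL_2(R)\times\Gm(R)$-action. First I would unwind her construction in our setting and record the resulting dictionary: a form of discriminant $4D$ has associated algebra isomorphic to $S$; the module associated to $F=[a,2b,c]$ is, up to isomorphism, the ideal $I_F=(a,y-b)$ of Definition~\ref{Qdfn}; $F$ is primitive exactly when $I_F$ is an invertible $S$-module; and the residual ambiguity in Wood's rigidified data is absorbed by the $\Gm(R)$-factor, so that her notion of equivalence of data becomes ordinary isomorphism of invertible $S$-modules. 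Once this dictionary is in place, well-definedness and bijectivity of the map $\PQuad(R,4D)\to\Pic(S)$ are immediate restatements of Wood's theorem.

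The verifications underlying the dictionary are elementary and I would spell them out. That $I_F$ is an ideal is the computation preceding the statement ($\bar b^2=\bar D$ in $R/aR$ makes $r+sy\mapsto\overline{r+sb}$ a ring surjection $S\twoheadrightarrow R/aR$ with kernel $I_F$); reducing $ay$ and $y^2-by=D-by$ modulo $a$ and $y-b$ gives $I_F=aR+(y-b)R$ as an $R$-module, and the split surjection $I_F\to R$, $u+vy\mapsto v$, with section $1\mapsto y-b$ and kernel $I_F\cap R=aR$, exhibits $I_F=aR\oplus(y-b)R$ as a locally free, hence free, $R$-module of rank $2$, so $I_F$ is locally free of rank $1$ over $S$. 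For invertibility I would use the identity
$$
I_F\cdot\sigma(I_F)=(a,y-b)(a,y+b)=a\cdot(a,\,y+b,\,y-b,\,c)=a\cdot(a,2b,c,\,y-b),
$$
coming from $b^2-ac=D$ and $(y+b)-(y-b)=2b$: when $F$ is primitive the last ideal is all of $S$, so $I_F\cdot\sigma(I_F)=aS$ and $I_F$ is invertible with inverse $a^{-1}\sigma(I_F)$, whereas the same identity shows that a non-primitive $F$ yields a non-invertible $I_F$. Finally, an equivalence $F'=(g,u).F$ translates into a change of $R$-basis ($g\in\GL_2(R)$) followed by a rescaling ($u\in\Gm(R)$) of the module, hence into an $S$-module isomorphism $I_F\cong I_{F'}$; this is the compatibility built into Wood's construction, and it is what makes the map well-defined on equivalence classes.

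For surjectivity, given an invertible $S$-module $I$ — a rank-$1$ projective $S$-module, hence a rank-$2$ locally free, hence free, $R$-module — one must exhibit, after replacing $I$ by an isomorphic fractional ideal, an $R$-basis of $I$ of the shape $\{a,\,y-b\}$. This normalisation is the substantive point, and is exactly what Wood's local-global analysis supplies over a general base; granting it, $(y-b)(y+b)=D-b^2\in I\cap R=aR$ forces $b^2-ac=D$ for some $c\in R$, and invertibility of $I$ forces $(a,2b,c)=(1)$, so $I=I_F$ with $F=[a,2b,c]$ primitive. For injectivity, an $S$-linear isomorphism $I_F\xrightarrow{\sim}I_{F'}$ is in particular an $R$-linear isomorphism of free rank-$2$ modules; comparing how it moves the distinguished bases $\{a,\,y-b\}$ and $\{a',\,y-b'\}$ produces an element of $\GL_2(R)$, and the only indeterminacy in identifying an abstract invertible module with a fractional ideal is multiplication by a unit, which contributes the $\Gm(R)$-factor, so $F$ and $F'$ are equivalent. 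I expect the real obstacle to be precisely the establishment of the dictionary of the first paragraph over an arbitrary base ring $R$: the Hermite-type normalisation of invertible modules with no division available, together with the check that Wood's rigidifying data is trivial — or exactly accounted for by $\Gm(R)$ — once the discriminant is fixed to $4D$ and the form is required to be primitive. The ideal-class identities above are, by comparison, routine.
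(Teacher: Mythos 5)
Your proposal is correct and follows essentially the same route as the paper: both reduce the statement to Wood's parametrisation (\cite[Theorem~1.5]{wood11}), using that locally free $R$-modules are free to identify Wood's linear binary quadratic forms with the forms of Definition~\ref{Qdfn} (primitivity and equivalence included), and both carry out the same explicit ideal computation $(a,y-b)(a,y+b)=(a)$ to exhibit invertibility. The only difference is that you sketch how injectivity, surjectivity and the normalisation of an invertible module to the shape $(a,y-b)$ would be extracted, while still attributing the substantive local--global step to Wood, exactly as the paper does.
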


\begin{proof}
Let us prove that the map above is well-defined. We need to check that, if $(a,2b,c)=(1)$, then $(a,y-b)$ is an invertible ideal of the quadratic ring $R[y]/(y^2-D)$.

We have:
$$
(y-b)(y+b)=y^2-b^2=D-b^2=-ac
$$
from which it follows that
\begin{align*}
(a,y-b)(a,y+b) &=(a^2,a(y-b),a(y+b),ac) \\
 &= (a)(a,y-b,y+b,c).
\end{align*}
Let us note that $(1)=(a,2b,c)\subseteq (a,y-b,y+b,c)$ hence the ideal on the right is equal to $(1)$. Therefore,
$$
(a,y-b)(a,y+b)=(a)
$$
which proves that $(a,y-b)$ is an invertible ideal, and its inverse is $(a,y+b)$.

We shall now explain how the bijectivity of our map is a special case of Wood's result. What Wood calls a linear binary quadratic form over some scheme $X$ is the data of a locally free rank $2$ $\mathcal{O}_X$-module $V$, a locally free rank $1$ $\mathcal{O}_X$-module $L$, and a global section of $\mathrm{Sym}^2V\otimes L$. In our setting, every locally free $R$-module of finite rank is free by assumption. One checks that linear binary quadratic forms over $\Spec(R)$ in the sense of Wood correspond exactly to binary quadratic forms over $R$ as in our Definition~\ref{Qdfn}, with the same concept of primitive form and of equivalence between two forms. The desired bijection now follows from \cite[Theorem~1.5]{wood11}. The explicit description of this bijection follows from a careful check of the local constructions in \cite[Section~2]{wood11}.
\end{proof}


\subsection{The Picard group of $\W$}

Consider again the hyperelliptic curve $C$ defined by the equation \eqref{Weqn}. The ring $\Q[x]$ being a principal ideal domain, it follows from Theorem~\ref{quadrep} that line bundles on $C\setminus\{\infty\}$ can be represented by primitive quadratic forms over $\Q[x]$, with discriminant $4f$. Moreover, $\Q[x]$ being an Euclidean domain, one can generalize the reduction algorithm of Gauss (for quadratic forms over $\Z$) to this setting, as it was noted by Cantor \cite{cantor}. This being done, we recover the description of degree zero divisor classes on $C$ due to Mumford \cite[Chap.~IIIa]{tata2}.

\begin{lem}[Mumford representation]
\label{mumfordrep}
Every element in $\Pic(C\setminus\{\infty\})$ can be uniquely represented by a quadratic form $[a,2b,c]$ over $\Q[x]$, with discriminant $4f$, where:
\begin{enumerate}
\item[(1)] $a$ is monic;
\item[(2)] $\deg b < \deg a \leq g$.
\end{enumerate}
In this correspondence, the quadratic form $F=[a,2b,c]$ corresponds to the divisor
$$
D_F:=\divisor(a)\cap\divisor(y-b)=\sum_{i=1}^r P_i
$$
where $P_i=(x_i,y_i)$, the $x_i$ are the roots of $a$ in $\overline{\Q}$, and $b(x_i)=y_i$.
\end{lem}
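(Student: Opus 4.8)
The plan is to deduce Mumford's representation (Lemma~\ref{mumfordrep}) from the general bijection of Theorem~\ref{quadrep} applied to the ring $R=\Q[x]$ and the element $D=f(x)$, followed by a normalization step within each equivalence class. First I would observe that $R=\Q[x]$ is a PID, so every finitely generated projective $R$-module is free, and $f\neq 0$, so the hypotheses of Theorem~\ref{quadrep} are met. Since $C$ is given by $y^2=f(x)$, the coordinate ring of the affine curve $C\setminus\{\infty\}$ is precisely $\Q[x,y]/(y^2-f(x))=R[y]/(y^2-D)$, so Theorem~\ref{quadrep} already gives a bijection $\PrimQuad(R,4f)\xrightarrow{\sim}\Pic(C\setminus\{\infty\})$, with the explicit form $[a,2b,c]\mapsto I_F=(a,y-b)$. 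Under the standard dictionary between invertible ideals and divisor classes on a smooth affine curve, the ideal $(a,y-b)$ corresponds to the effective divisor cut out by $a=0$ and $y=b$, i.e.\ $\sum_i (x_i,b(x_i))$ where the $x_i$ are the roots of $a$; I would record this as the geometric meaning of the correspondence.

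The substantive work is then to show that each equivalence class of primitive forms contains exactly one representative $[a,2b,c]$ satisfying (1) $a$ monic and (2) $\deg b<\deg a\le g$. For existence: starting from any primitive $[a_0,2b_0,c_0]$ with $b_0^2-a_0c_0=f$, I would first use the action of $\Gm(R)=\Q^\times$ (rescaling as in the remark around \eqref{eq_scaling}, or rather by a constant unit) to make $a$ monic — note a primitive form cannot have $a=0$ identically unless one can swap via $\left(\begin{smallmatrix}0&1\\-1&0\end{smallmatrix}\right)\in\GL_2(R)$ to exchange $a$ and $c$, and primitivity together with $\deg f=2g+1$ odd prevents both $a$ and $c$ from vanishing. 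Then I would apply the unipotent transformation $\left(\begin{smallmatrix}1&0\\t&1\end{smallmatrix}\right)$ with $t\in\Q[x]$, which sends $[a,2b,c]$ to $[a,2(b+ta),\ast]$, i.e.\ replaces $b$ by $b+ta$; choosing $t$ to be minus the quotient in the Euclidean division of $b$ by $a$ achieves $\deg b<\deg a$. The degree bound $\deg a\le g$ then follows automatically: from $ac=b^2-f$ with $\deg b<\deg a$ and $\deg f=2g+1$, if $c\neq 0$ we get $\deg a+\deg c=2g+1$ forcing $\deg a\le g$ (since $\deg a\le\deg c$ cannot both exceed $g$); the case $c=0$ would force $a\mid f$ with $a$ monic and $b^2=f$, impossible as $f$ is squarefree of odd degree unless $a$ is constant, hence $a=1$.

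For uniqueness: suppose $[a,2b,c]\sim[a',2b',c']$ are two such normalized forms. I would pull back the equivalence to the level of ideals via the injectivity half of Theorem~\ref{quadrep}, or argue directly that the stabilizer computation forces $(g,u)$ to be upper triangular unipotent times a scalar, and that the normalization $\deg b<\deg a$, $a$ monic then pins down $t=0$ and $u=1$. Concretely: $a$ is recovered as the monic generator of the ideal $I_F\cap R$ (using that $R$ is a PID and tracking the module structure $I_F=aR\oplus(y-b)R$), and once $a$ is fixed, $b$ is determined modulo $a$, hence uniquely by $\deg b<\deg a$; then $c=(b^2-f)/a$ is forced.

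The main obstacle I anticipate is the bookkeeping around degenerate coefficients — ensuring at the existence stage that one can always reach $a\neq 0$ monic (handling the $\GL_2$-swap and the primitivity constraint carefully), and at the uniqueness stage that no nontrivial $\GL_2(\Q[x])$-element preserves the normal form; the parity of $\deg f$ and squarefreeness of $f$ are exactly what make these degenerate situations collapse, so the argument is really a clean Euclidean-division normalization once those edge cases are dispatched. An alternative, perhaps cleaner, route is to bypass the abstract uniqueness argument and instead invoke the classical Mumford theory directly (citing \cite{tata2}) for the well-definedness and bijectivity on the curve side, using Theorem~\ref{quadrep} only to identify the ideal $(a,y-b)$ with the divisor $\sum P_i$; this is the path the phrase ``can be recovered easily from Theorem~\ref{quadrep}'' suggests, and I would present the Euclidean normalization as the one genuinely new ingredient.
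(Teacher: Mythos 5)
The main gap is in your existence step: the claim that $\deg a\le g$ ``follows automatically'' once $a$ is monic and $\deg b<\deg a$ is false. From $ac=b^2-f$ with $\deg b<\deg a$ you only get $\deg a+\deg c=\max(2\deg b,\,2g+1)$, and the parenthetical ``$\deg a$ and $\deg c$ cannot both exceed $g$'' only bounds $\min(\deg a,\deg c)$, not $\deg a$. Indeed every class contains forms violating the bound: the ideal $(a,y-b)$ of a generic effective divisor of degree $g+1$ has $a$ monic of degree $g+1$, $\deg b\le g<\deg a$, $c=(b^2-f)/a$ of degree $g$. The missing idea is the reduction loop (Gauss/Cantor reduction, i.e.\ the content of Mumford's theorem): when $\deg a>g$, swap the roles of $a$ and $c$ via the antidiagonal element of $\GL_2(\Q[x])$ (geometrically, apply the hyperelliptic involution and use linear equivalence with a multiple of $\infty$), renormalize $b$ modulo the new leading coefficient, and check that $\deg c=\max(2\deg b,2g+1)-\deg a<\deg a$ whenever $\deg a\ge g+1$, so the process terminates with $\deg a\le g$. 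Equivalently one can invoke Riemann--Roch to say that every degree-zero class is of the form $D-r\cdot\infty$ with $D$ effective semi-reduced and $r\le g$; either way, this step is the substance of the lemma and cannot be skipped.

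Your uniqueness sketch has a related defect: recovering $a$ as the monic generator of $I_F\cap\Q[x]$ pins down $a$ from the \emph{ideal} $I_F$, but two equivalent forms give the same ideal \emph{class}, not the same ideal, so this does not yet show that two normalized forms in one class coincide; likewise the assertion that the stabilizer forces upper-triangular unipotent elements is exactly what needs proof. The standard argument shows that two semi-reduced effective divisors of degree $\le g$ whose differences with multiples of $\infty$ are linearly equivalent must be equal (again Riemann--Roch, or an explicit computation with functions regular outside $\infty$ of pole order $\le g$). Note also that the paper itself offers no proof of this lemma: it recalls the statement from Mumford \cite[Chap.~IIIa]{tata2} and merely remarks that it can be recovered from Theorem~\ref{quadrep}, so your fallback route (cite Mumford for existence and uniqueness, use Theorem~\ref{quadrep} only to identify $(a,y-b)$ with the divisor $\sum_i P_i$) is in fact the paper's approach and is the safe way to present this; if you keep your self-contained argument, the reduction step and the divisor-theoretic uniqueness argument must be supplied.
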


Let us note that the degree zero divisor on $C$ corresponding to $[a,2b,c]$ under the isomorphism $\Pic(C\setminus\{\infty\})\simeq \Pic^0(C)$ from diagram \eqref{diag2pic} is
$$
\sum_{i=1}^r P_i -r\cdot \infty.
$$

\begin{rmq}
\label{rmq:squarefree}
Every quadratic form $F=[a,2b,c]$ over $\Q[x]$, with discriminant $4f$, is primitive. Indeed, let $t$ be a point of $\A^1$ such that the fiber of $F$ at $t$ is zero. Then $a$, $b$ and $c$ vanish at $t$, which implies that $f=b^2-ac$ vanishes with order at least two at $t$. This is impossible, because $f$ is square-free by assumption.
\end{rmq}

Let us recall the classical result of Seshadri \cite{seshadri58} : $\Z$ being a principal ideal domain, every finitely generated locally free $\Z[x]$-module is free. Therefore, it follows from Theorem~\ref{quadrep} that line bundles on $\W$ can be represented by primitive quadratic forms over $\Z[x]$.

\begin{lem}[Integral Mumford representation]
\label{intmumford}
Every element of $\Pic(\W)$ can be represented by a primitive quadratic form over $\Z[x]$, with discriminant $4f$.
\end{lem}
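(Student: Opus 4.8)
The plan is to obtain this as a direct consequence of Theorem~\ref{quadrep}, applied to the base ring $R=\Z[x]$ and the element $D=f$. First I would check that the hypotheses are met: $f$ is a nonzero element of $\Z[x]$ (it is monic of degree $2g+1\geq 3$), and, by the result of Seshadri recalled just above, every finitely generated locally free $\Z[x]$-module is free, so $R=\Z[x]$ is of the kind to which Theorem~\ref{quadrep} applies. Next I would identify the target of the bijection: one has
$$
R[y]/(y^2-D)=\Z[x][y]/(y^2-f(x))=\Z[x,y]/(y^2-f(x)),
$$
which is exactly the coordinate ring of the affine scheme $\W$, so that $\Pic(R[y]/(y^2-D))=\Pic(\W)$.

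Theorem~\ref{quadrep} then furnishes a bijection between $\PQuad(\Z[x],4f)$ and $\Pic(\W)$, sending the class of a primitive form $F=[a,2b,c]$ with $b^2-ac=f$ to the class of the invertible ideal $I_F=(a,y-b)$. Surjectivity of this map is precisely the assertion of the Lemma: every class in $\Pic(\W)$ is represented by a primitive binary quadratic form over $\Z[x]$ of discriminant $4f$. Two further remarks are worth including. First, here the discriminant is literally $4f$, not merely $4f$ up to the square of a unit: the only units of $\Z[x]$ are $\pm 1$, whose squares equal $1$, so the rescaling discussed after Definition~\ref{Qdfn} is unnecessary. Second, in contrast with Lemma~\ref{mumfordrep}, no normalisation of $(a,b,c)$ (such as taking $a$ monic) is claimed, and indeed over $\Z[x]$ one cannot in general reduce to a monic leading coefficient; accordingly only the existence of a representing form is asserted, not its uniqueness.

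I do not expect a genuine obstacle here: the statement is essentially a translation of Theorem~\ref{quadrep}. The only points that require a word are the verification that $\Z[x]$ satisfies the freeness hypothesis (this is exactly Seshadri's theorem) and the identification of $\W$ with $\Spec$ of the quadratic algebra $\Z[x][y]/(y^2-f)$; everything else is bookkeeping once Wood's theorem is in hand.
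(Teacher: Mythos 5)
Your argument is correct and is exactly the paper's: the lemma is stated there as an immediate consequence of Theorem~\ref{quadrep} applied to $R=\Z[x]$, $D=f$, with Seshadri's theorem supplying the freeness hypothesis and the identification $\Z[x,y]/(y^2-f)$ with the coordinate ring of $\W$ giving $\Pic(\W)$ as the target. Your additional remarks on the units of $\Z[x]$ and the absence of a monic normalisation are accurate but not needed.
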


Given $\mathcal{L}\in\Pic(\W)$, the primitive quadratic form $[a,2b,c]$ over $\Z[x]$ which represents $\mathcal{L}$ is in general far from the quadratic form over $\Q[x]$ which represents its generic fiber given by Mumford's representation. Roughly speaking, the form over $\Z[x]$ can be obtained from that over $\Q[x]$ by some kind of desingularisation process, which eliminates the denominators. This process has the disastrous effect of enlarging the degrees of the polynomials $a$, $b$ and $c$. See \cite[Example~3.3]{soleng11} for an example involving elliptic curves.

On the other hand, in order to generalise Soleng's proof to our setting, we need to work with a form $[a,2b,c]$ such that $\deg(a)<\deg(f)$. So the integral version of Mumford's representation may be too large for our purpose.

We shall now give an alternative version of Mumford's representation which allows to handle denominators of $[a,2b,c]$ without enlarging the degrees of $a$, $b$ and $c$.

If $h\in\Z[x]$ is a polynomial with integer coefficients, the \emph{content} of $h$ is the gcd of its coefficients; we denote it by $\cont(h)$. We extend this definition to polynomials with rational coefficients by letting $\cont(\lambda h):=\lambda\cont(h)$ for any rational number $\lambda\neq 0$. It's easy to check that the content is multiplicative, and that $\cont(h)$ is an integer if and only if $h$ belongs to $\Z[x]$.

\begin{lem}
\label{altmumford}
Every element in $\Pic(C\setminus\{\infty\})$ can be represented by a quadratic form
$$
\left[\frac{A}{e},2\frac{B}{e},\frac{C}{e}\right]
$$
with discriminant $4f$, where:
\begin{enumerate}
\item[(1)] $A$, $B$ and $C$ belong to $\Z[X]$;
\item[(2)] $\cont(A)=1$;
\item[(3)] $e>0$ in an integer coprime to $\cont(B)$;
\item[(4)] $\deg B < \deg A \leq g$.
\end{enumerate}
Such a representation is unique, up to the sign of the leading coefficient of $A$. Moreover, $[A,2B,C]$ is a quadratic form over $\Z[x]$, with discriminant $4e^2f$.
\end{lem}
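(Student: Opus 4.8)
The plan is to build on Mumford's representative (Lemma~\ref{mumfordrep}) and to clear denominators in a controlled way, invoking the multiplicativity of $\cont$ to preserve integrality while leaving the degrees untouched. So write the Mumford representative of the given class as $[a,2b,c]$ over $\Q[x]$, with $a$ monic, $b^2-ac=f$ and $\deg b<\deg a\le g$. First I would set $e>0$ to be the least integer with $eb\in\Z[x]$, which (since $\cont(eb)=e\cont(b)$, and $\cont(h)$ is an integer exactly when $h\in\Z[x]$) is just the denominator of $\cont(b)$ written in lowest terms; similarly let $N>0$ be the least integer with $Na\in\Z[x]$, so that $\cont(a)=1/N$ because $a$ is monic. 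Put $A:=Na$ and $B:=eb$. Then $A,B\in\Z[x]$, $\cont(A)=1$, $\deg A=\deg a\le g$, $\deg B=\deg b<\deg A$, and $\cont(B)=e\,\cont(b)$ is an integer coprime to $e$ by the very choice of $e$; this gives conditions (1)--(4) for $A$ and $B$.

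Next I would produce $C$. Set $C:=(B^2-e^2f)/A$. From $b^2-f=ac$ one computes $B^2-e^2f=e^2(b^2-f)=e^2ac=\tfrac{e^2}{N}\,A\,c$, so $C=\tfrac{e^2}{N}c$ does lie in $\Q[x]$; and since $\cont$ is multiplicative with $\cont(A)=1$,
$$
\cont(C)=\cont(AC)=\cont(B^2-e^2f)\in\Z,
$$
hence $C\in\Z[x]$. Then $[A,2B,C]$ is a binary quadratic form over $\Z[x]$ with $B^2-AC=e^2f$, i.e.\ of discriminant $4e^2f$, while $\left[\tfrac{A}{e},2\tfrac{B}{e},\tfrac{C}{e}\right]=\left[\tfrac{N}{e}a,\,2b,\,\tfrac{e}{N}c\right]$ is equivalent to $[a,2b,c]$ via the scaling relation~\eqref{eq_scaling}; hence it represents the given class, and its discriminant is $b^2-ac=f$. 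This settles existence.

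For uniqueness, suppose $\left[\tfrac{A'}{e'},2\tfrac{B'}{e'},\tfrac{C'}{e'}\right]$ is another representative of the same class satisfying (1)--(4). Using~\eqref{eq_scaling} to rescale its first coefficient to a monic polynomial yields a form that still represents the same class, still has discriminant $4f$ (the relation~\eqref{eq_scaling} does not change the discriminant) and still satisfies the degree bounds; by the uniqueness in Lemma~\ref{mumfordrep} this form equals $[a,2b,c]$. Reading off coefficients gives $B'/e'=b$ and $A'/e'=\lambda a$, $C'/e'=\lambda^{-1}c$ with $\lambda=\mathrm{lc}(A'/e')$. Now $B'=e'b$ together with $\cont(B')\in\Z$ and condition (3) forces $e'$ to be the denominator of $\cont(b)$, so $e'=e$ and $B'=B$; and $A'=e\lambda a$ together with condition (2) forces $e\lambda=\pm N$, so $A'=\pm A$ and $C'=\pm C$ with equal signs. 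Thus the representation is unique up to the sign of the leading coefficient of $A$.

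I expect the integrality of $C$ to be the only step that is not purely formal: a priori $C=\tfrac{e^2}{N}c$ carries denominators both from $N$ and from $c$, and what makes them all cancel is precisely the Gauss-lemma identity $\cont(AC)=\cont(A)\cont(C)$ combined with $\cont(A)=1$. Everything else is bookkeeping --- though one should keep in mind the degenerate case $b=0$ (the trivial class), where $e=1$, $B=0$, $C=-f$, and all assertions are immediate.
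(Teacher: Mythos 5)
Your proof is correct and follows essentially the same route as the paper: clear denominators in the Mumford representative by taking $A=Na$ and $B=eb$ with $e$ coprime to $\cont(B)$, deduce $C\in\Z[x]$ from $B^2-AC=e^2f$ together with multiplicativity of the content and $\cont(A)=1$, and reduce uniqueness to that of the Mumford representation. The only difference is that you spell out the uniqueness step (and the degenerate case $b=0$) in more detail than the paper, which simply invokes the unicity of Lemma~\ref{mumfordrep}.
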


\begin{proof}
Let $[a,2b,c]$ be a quadratic form over $\Q[x]$ with discriminant $4f$, in Mumford representation (Lemma~\ref{mumfordrep}). Let $\lambda\in \Q^{\times}$ such that $\lambda a$ belongs to $\Z[x]$ and $\cont(\lambda a)=1$. Then according to \eqref{eq_scaling}, we have
$$
[a,2b,c]\sim_{\Q[x]} [\lambda a,2b,\lambda^{-1}c].
$$

Let us put $A:=\lambda a$. By elementary considerations, there exists $B\in\Z[x]$ and an integer $e>0$ such that
$$
b = \frac{B}{e}
$$
and one may choose them in such a way that $e$ is coprime to $\cont(B)$. Finally we claim that $C:=e^2\lambda^{-1}c$ belongs to $\Z[x]$. Indeed, the relation $b^2-ac=f$ is equivalent to
$$
B^2-AC=e^2f
$$
hence $AC$ belongs to $\Z[x]$. But $\cont(A)=1$, so $\cont(C)=\cont(AC)$ is an integer, which proves that $C$ belongs to $\Z[x]$. Therefore, $[A,2B,C]$ is a quadratic form over $\Z[x]$, with discriminant $4e^2f$, and
$$
[a,2b,c]\sim_{\Q[x]} \left[A,2\frac{B}{e},\frac{C}{e^2}\right]\sim_{\Q[x]} \left[\frac{A}{e},2\frac{B}{e},\frac{C}{e}\right].
$$
The unicity (up to sign) follows from the unicity of the Mumford representation. 
\end{proof}

\begin{lem}
\label{lem:closure}
Let
$$
F=\left[\frac{A}{e},2\frac{B}{e},\frac{C}{e}\right]
$$
be a quadratic form as in Lemma~\ref{altmumford}. Let $\mathcal{Z}_F\subset \W$ be the closed subscheme defined by the ideal
$$
\mathcal{I}_F:=(A,ey-B)
$$
Then the generic fiber of $\mathcal{Z}_F$ is the divisor $D_F$ on $C\setminus\{\infty\}$ corresponding to the quadratic form $F$. We have
\begin{equation}
\label{eq_closure}
\overline{D_F}\subseteq \mathcal{Z}_F
\end{equation}
and these two subschemes agree on the open subset of $\W$ over which $e$ is invertible.
\end{lem}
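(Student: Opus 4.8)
The plan is to establish each of the four claims in turn, working locally on $\W$ and using the explicit description of $\mathcal{I}_F$.

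First I would check that $\mathcal{Z}_F$ is horizontal. Since $A\in\Z[x]$ has content $1$, the quotient $\Z[x]/(A)$ is a finitely generated $\Z$-module which is flat over $\Z$ (indeed torsion-free, as $A$ is a non-zero-divisor and $\cont(A)=1$ prevents any prime from being a zero divisor on the quotient). The ring $\mathcal{O}(\mathcal{Z}_F)=\Z[x,y]/(y^2-f,\,A,\,ey-B)$ is a quotient of $\Z[x]/(A)$ compatible with the $\Z$-structure, so it too is flat (torsion-free) over $\Z$; hence $\mathcal{Z}_F\to\Spec(\Z)$ is flat, i.e.\ $\mathcal{Z}_F$ is horizontal. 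One should also note $\mathcal{Z}_F$ is non-empty of relative dimension $0$ because over $\Q$ it cuts out finitely many points (see below), so it is genuinely a horizontal curve, not a vertical fiber.

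Next, the identification of the generic fiber. Over $\Q$, the ideal $\mathcal{I}_F\otimes\Q=(A,ey-B)$ in $\Q[x,y]/(y^2-f)$ equals $(A,\,y-B/e)$ after inverting $e$ (a unit of $\Q$), and since $F=[A/e,2B/e,C/e]$ has discriminant $4f$ with $b=B/e$ satisfying $b^2-ac=f$, the ideal $(A/e,\,y-B/e)$ — which is the same ideal as $(A,y-B/e)$ up to the unit $e$ — is precisely the Mumford ideal $I_F$ of Lemma~\ref{mumfordrep}, cutting out $D_F=\sum_i P_i$ with $P_i=(x_i,B(x_i)/e)$ where $x_i$ ranges over the roots of $A$. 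So the generic fiber of $\mathcal{Z}_F$ is $D_F$, as claimed.

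For the inclusion $\overline{D_F}\subseteq\mathcal{Z}_F$: $\overline{D_F}$ is by definition the scheme-theoretic closure of $D_F$ in $\W$, i.e.\ the smallest closed subscheme of $\W$ whose generic fiber is $D_F$; equivalently its ideal is the largest ideal of $\mathcal{O}(\W)$ whose extension to the generic fiber is $\mathcal{I}_F\otimes\Q$, namely the saturation $\mathcal{I}_F^{\mathrm{sat}}=\mathcal{I}_F\otimes\Q\,\cap\,\mathcal{O}(\W)$. Since $\mathcal{I}_F\subseteq\mathcal{I}_F^{\mathrm{sat}}$, taking closed subschemes reverses the inclusion: $\overline{D_F}=V(\mathcal{I}_F^{\mathrm{sat}})\subseteq V(\mathcal{I}_F)=\mathcal{Z}_F$. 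Finally, over the open locus $U\subseteq\W$ where $e$ is invertible — i.e.\ after inverting $e$ — the ideal $\mathcal{I}_F|_U=(A,\,y-B/e)$ is already saturated: it is a locally free $\mathcal{O}(U)$-module (being of the form $I_F$ for a primitive form, cf.\ the computation before Theorem~\ref{quadrep}, which shows $(A,y-B/e)(A,y+B/e)=(A)$), hence a Cartier divisor, and its associated closed subscheme is flat over $\Spec(\Z[1/e])$; thus $\mathcal{Z}_F|_U$ is itself the closure of its own generic fiber $D_F$, so $\overline{D_F}|_U=\mathcal{Z}_F|_U$.

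\textbf{Main obstacle.} The delicate point is the last two assertions — specifically, pinning down that $\overline{D_F}$ corresponds to the \emph{saturation} of $\mathcal{I}_F$ and that this saturation can only differ from $\mathcal{I}_F$ itself at primes dividing $e$. Concretely one must rule out that $\mathcal{I}_F$ acquires embedded or torsion components at primes $p\nmid e$; this comes down to checking that the relation $B^2-AC=e^2f$, combined with $\cont(A)=1$ and $\gcd(e,\cont(B))=1$, forces $\Z[x,y]/(y^2-f,A,ey-B)$ to be $\Z[1/e]$-flat after localizing away from $e$ — essentially the same primitivity/squarefreeness bookkeeping as in Remark~\ref{rmq:squarefree}, but now tracking the exact power of each prime. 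I would isolate this as the heart of the proof and handle it by a direct local computation at each prime $p\nmid e$, reducing $F$ mod $p$ and invoking that $\bar f$ is nonzero of the expected degree.
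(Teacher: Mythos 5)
Two steps of your proposal have genuine gaps. First, the flatness argument for horizontality does not work: the ring $\mathcal{O}(\mathcal{Z}_F)=\Z[x,y]/(y^2-f,\,A,\,ey-B)$ is \emph{not} a quotient of $\Z[x]/(A)$ in general, because $y$ does not lie in the image of $\Z[x]$ at primes dividing $e$ (only $ey=B(x)$ and $y^2=f(x)$ do); and even if it were, "quotient of a torsion-free module is torsion-free'' is false -- that implication only goes the other way (submodules inherit torsion-freeness, quotients do not). Worse, the statement you are aiming at is too strong: if $\mathcal{Z}_F$ were flat over $\Z$, then the surjection $\mathcal{O}(\mathcal{Z}_F)\to\mathcal{O}(\overline{D_F})$ would have torsion kernel inside a torsion-free module, hence $\mathcal{Z}_F=\overline{D_F}$ everywhere -- precisely the equality the lemma deliberately asserts only over the locus where $e$ is invertible, and whose failure at primes dividing $e$ is what Lemma~\ref{quadlemma} (via $d=\gcd(A(n),e,B(n))$) is designed to handle. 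What "horizontal'' means and what must be proved is only that $\mathcal{Z}_F$ contains no vertical component, i.e.\ meets each fiber in finitely many points; this follows from $\cont(A)=1$ and $\gcd(e,\cont(B))=1$ (so neither $A$ nor $ey-B$ reduces to $0$ mod any $p$) together with the geometric integrality of the fibers of $\W$ (Lemma~\ref{Wlemma}).

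Second, your argument for the agreement over $\Spec(\Z[1/e])$ rests on the ideal $(A,\,y-B/e)$ being invertible because "the form is primitive'', but primitivity is only known over $\Q[x]$ (Remark~\ref{rmq:squarefree}, which uses that $f$ is square-free in $\Q[x]$). Over $\Z[1/e][x]$ the form $[A,2B/e,C/e^2]$ can fail to be primitive at a prime $p\nmid e$ dividing $\disc(f)$: if $A,B,C$ have a common root mod $p$ (equivalently, the closure of $D_F$ passes through a singular point of $\W_{\mathbb{F}_p}$), then $e^2f=B^2-AC$ has a double root mod $p$, the computation preceding Theorem~\ref{quadrep} does not apply, and the ideal need not be locally free there. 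The paper sidesteps invertibility altogether: the substitution $(x,y)\mapsto(x,ey)$ identifies $\W\times\Spec(\Z[1/e])$ with the $\Z[1/e]$-scheme $y^2=e^2f$ and carries $\mathcal{I}_F$ to $(A,\,y-B)$; since $B^2-AC=e^2f$, one gets $y^2-e^2f\in(A,\,y-B)$ and the quotient ring is simply $\Z[1/e][x]/(A)$, which is flat over $\Z[1/e]$ because $\cont(A)=1$, hence equals the closure of its generic fiber. You correctly isolate the behaviour at primes $p\nmid e$ as the heart of the matter, but the route you sketch (invertibility plus reduction of $F$ mod $p$) does not close it; the change of variables reducing to the case $e=1$ is the missing idea.
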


\begin{proof}
By definition of the scheme theoretic closure, $\overline{D_F}$ is the smallest closed subscheme of $\W$ with generic fiber $D_F$, hence is contained in $\mathcal{Z}_F$, which proves \eqref{eq_closure}. Let us prove that these two subschemes agree on the open subset where $e$ is invertible. We have an isomorphism
$$
\Z\left[\frac{1}{e},x,y\right]/(y^2-f) \simeq \Z\left[\frac{1}{e},x,y\right]/(y^2-e^2f)
$$
defined by $(x,y)\mapsto (x,y/e)$. Hence, it suffices to prove the statement when $e=1$.

Let us assume that $e=1$, \emph{i.e.} $[A,2B,C]$ is a quadratic form over $\Z[x]$, with discriminant $4f$, and $\cont(A)=1$. Then the relations $A=0$ and $y=B$ imply that $y^2=f$. Therefore, the closed subscheme of $\W$ defined by the ideal $\mathcal{I}_F$ is isomorphic to
$$
\Spec(\Z[x,y]/(A,y-B))\simeq \Spec(\Z[x]/(A)),
$$
and this last scheme is the closure in $\W$ of its generic fiber, because $\cont(A)=1$, hence the result.
\end{proof}


\subsection{Remarks on quadratic $\Z$-algebras}

A quadratic $\Z$-algebra is a commutative ring which is free of rank $2$ as a $\Z$-module; it is said to be \emph{non-degenerate} if its tensor product with $\Q$ is a quadratic field. For example, given $D\in\Z$, the quadratic $\Z$-algebra $\Z[y]/(y^2-D)$ is non-degenerate if and only if $D$ is not a square. In this case, we have
$$
\Z[y]/(y^2-D)\simeq\Z[\sqrt{D}].
$$

Throughout this section, we consider a non-degenerate quadratic $\Z$-algebra of the form $\Z[\sqrt{D}]$ and, by abuse of notation, we denote by $y$ the class of $y$ modulo $(y^2-D)$.

The facts stated below are elementary. 
For simplicity, the data of a quadratic form $[a,2b,c]$ over $\Z$ with discriminant $4D$ is replaced by the data of a couple $(a,b)\in\Z^2$ satisfying the condition $a\mid (b^2-D)$, which amounts to the same thing.

\begin{facts}
\label{3facts}
Let us consider a non-degenerate quadratic $\Z$-algebra $\Z[y]/(y^2-D)\simeq\Z[\sqrt{D}]$.
\begin{enumerate}
\item[(i)] If $I\neq (0)$ is an ideal of $\Z[\sqrt{D}]$, there exists $s$, $a$ and $b$ in $\Z$ such that
$$
I=(s)(a,y-b)  \qquad \text{with } a\mid (b^2-D).
$$
Moreover:
\begin{itemize}
\item up to sign, the integers $s$ and $a$ are uniquely determined by $I$
\item if
$$
(s)(a,y-b) \subseteq (t)(u,y-v) \qquad \text{with } a\mid (b^2-D) ~\text{and}~ u\mid (v^2-D)
$$
then $t\mid s$ and $tu\mid sa$.
\end{itemize}
\item[(ii)] If $a$ and $e$ are coprime, and $a\mid (b^2-e^2D)$, then for any $e'$ such that $ee'\equiv 1\pmod{a}$, we have $a\mid ((be')^2-D)$ and
$$
(a,ey-b)\Z[\sqrt{D}]=(a,y-be').
$$
\end{enumerate}
\end{facts}

\begin{proof}
(i) We note that $I$ is a lattice in $\Z[\sqrt{D}]=\Z\oplus y\Z$, hence by the existence of the Hermite normal form of a lattice, there exist integers $s$, $a$ and $b$ such that
$$
I=sa\Z \oplus s(y-b)\Z.
$$

Up to sign, the integer $s$ is the largest positive integer such that $I\subseteq s\Z[\sqrt{D}]$, hence is uniquely determined by $I$. Moreover,  $s^{-1}I=a\Z \oplus (y-b)\Z$ is also an ideal of $\Z[\sqrt{D}]$, hence the relation $a\mid (b^2-D)$ is satisfied according to the discussion before Theorem~\ref{quadrep}.

Finally, $sa\Z = I\cap \Z$, hence $sa$ is (up to sign) uniquely determined by $I$, and so does $a$. The statement on inclusions follows from the same machinery.

(ii) By construction, $e$ and $e'$ are mutual inverses in $\Z/a\Z$, hence the relation $\overline{e}^2\overline{D}=\overline{b}^2$ in $\Z/a\Z$ is equivalent to the relation $\overline{D}=(\overline{be'})^2$.
\end{proof}

Given an ideal $I$ of $\Z[e\sqrt{D}]$ coming from a quadratic form, the following Lemma allows one to describe explicitely $I\Z[\sqrt{D}]$.

\begin{lem}
\label{lemideal}
With the notation above, let $e>0$ be an integer, and let us consider an ideal of $\Z[e\sqrt{D}]$ of the form
$$
I=(a,ey-b) \qquad \text{where } a\mid (b^2-e^2D).
$$
Let $d=\gcd(a,e,b)$. Then there exists $a'$ and $b'$ in $\Z$ such that
$$
I\Z[\sqrt{D}]=(d)(a',y-b')
$$
where $a'\mid (b'^2-D)$ and
$$
a'=\frac{a}{d'} \qquad\text{with } d\mid d'\mid d^2.
$$
\end{lem}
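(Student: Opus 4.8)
The plan is to peel off a principal ideal $(d)$ from $I\Z[\sqrt D]$ and to recognise what remains as a ``primitive'' ideal, whose invariant $a'$ can then be read off from its norm. Write $a=d\alpha$, $b=d\beta$, $e=d\varepsilon$, so that $\gcd(\alpha,\beta,\varepsilon)=1$. The key observation is that, although $\varepsilon y-\beta$ need not belong to $\Z[e\sqrt D]$ when $d>1$, it does belong to $\Z[\sqrt D]$ since $y=\sqrt D\in\Z[\sqrt D]$; as $ey-b=d(\varepsilon y-\beta)$ and $a=d\alpha$, this yields
$$
I\Z[\sqrt D]=(a,ey-b)\Z[\sqrt D]=(d)\cdot J,\qquad J:=(\alpha,\varepsilon y-\beta)\Z[\sqrt D].
$$
It thus suffices to show that $J=(a',y-b')$ for suitable $a',b'\in\Z$ with $a'\mid b'^2-D$, and that $a'$ equals $a/d^2$ (which coincides with $a/d$ precisely when $d=1$).

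First I would check that $J$ is contained in no proper principal ideal $(m)$, $m\in\Z_{>1}$: a $\Z$-module generating set of $J$ is $\{\alpha,\,\alpha y,\,\varepsilon y-\beta,\,(\varepsilon y-\beta)y\}$, and expressing it in the basis $(1,y)$ of $\Z[\sqrt D]$ shows that the gcd of all the resulting coordinates is $\gcd(\alpha,\beta,\varepsilon,\varepsilon D)=1$. By Facts~\ref{3facts}(iv) we may write $J=(q)(a',y-b')$ with $q,a',b'\in\Z$, $a'\mid b'^2-D$ and $\Norm(J)=|q^2a'|$; but $(q)(a',y-b')\subseteq(q)\Z[\sqrt D]$, so the previous remark forces $q=\pm1$. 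Hence $J=(a',y-b')$ with $|a'|=\Norm(J)$, and computing the index of $J$ in $\Z[\sqrt D]$ directly from the above generating set gives $\Norm(J)=\gcd(\alpha,\beta^2-\varepsilon^2D)$.

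It remains to identify $\gcd(\alpha,\beta^2-\varepsilon^2D)$ with $a/d^2$, and this is where primitivity of the underlying form $[a,2b,c]$ (i.e.\ $\gcd(a,2b,c)=1$, with $ac=b^2-e^2D$) enters. Fix a prime $p$ and set $k=v_p(d)$. If $k\geq1$, then $p$ divides $b$ and $e$, so $v_p(b^2-e^2D)\geq2k$; since $p$ divides $a$ and $2b$, primitivity forces $p\nmid c$, hence $v_p(a)=v_p(ac)=v_p(b^2-e^2D)\geq2k$. In particular $d^2\mid a$; moreover $v_p(\alpha)=v_p(a)-k$ and $v_p(\beta^2-\varepsilon^2D)=v_p(b^2-e^2D)-2k=v_p(a)-2k$, so $v_p\bigl(\gcd(\alpha,\beta^2-\varepsilon^2D)\bigr)=v_p(a)-2k=v_p(a/d^2)$. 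If $k=0$, then $v_p(\alpha)=v_p(a)\leq v_p(b^2-e^2D)=v_p(\beta^2-\varepsilon^2D)$ because $a\mid b^2-e^2D$, so again $v_p\bigl(\gcd(\alpha,\beta^2-\varepsilon^2D)\bigr)=v_p(a)=v_p(a/d^2)$. Comparing $p$-adic valuations at every prime gives $\Norm(J)=a/d^2$, and together with the previous paragraphs this proves $J=(a',y-b')$ with $a'=a/d^2$, as required.

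The main obstacle, and the only non-formal step, is the last paragraph: without the primitivity of $[a,2b,c]$ the statement is simply false (a non-primitive form whose coefficients share a common factor produces an extra common divisor in $I\Z[\sqrt D]$ that is not accounted for by $(d)$ alone), and it is exactly primitivity that forces $d^2\mid a$ and pins the norm of $J$ down to $a/d^2$; the dichotomy ``$a/d$ or $a/d^2$'' in the statement is then nothing but the distinction between $d=1$ and $d>1$. The steps involving the factorisation $I\Z[\sqrt D]=(d)\cdot J$ and the reduction to a norm computation are routine, the only mild point being that $\varepsilon y-\beta$ lies in $\Z[\sqrt D]$ but generally not in $\Z[e\sqrt D]$.
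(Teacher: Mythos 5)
The first part of your argument is correct and is in fact a cleaner mechanism than the paper's: the factorisation $I\Z[\sqrt{D}]=(d)\,J$ with $J=(\alpha,\varepsilon y-\beta)$, the observation that $J$ has trivial rational content (so that in Facts~\ref{3facts}(iv) one must have $q=\pm1$, whence $J=(a',y-b')$ with $|a'|=\Norm(J)$), and the determinantal computation $\Norm(J)=\gcd(\alpha,\beta^2-\varepsilon^2D)=\gcd\bigl(a/d,\,(b^2-e^2D)/d^2\bigr)$ are all sound. The paper instead reduces to $d$ a prime power and runs a case analysis on which of $v_p(a),v_p(e),v_p(b)$ attains the minimum, using Facts~(i)--(iii); your norm formula could replace much of that bookkeeping.

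The gap is in the last step. You identify $\gcd(\alpha,\beta^2-\varepsilon^2D)$ with $a/d^2$ by invoking primitivity of $[a,2b,c]$, but the lemma has no such hypothesis: it assumes only $a\mid(b^2-e^2D)$ (no $c$ even appears), and where it is applied (Lemma~\ref{quadlemma}) the ideal is $(A(n),e\sqrt{f(n)}-B(n))$ with no control whatsoever on $\gcd(A(n),2B(n),C(n))$ --- the surrounding sections make a point of the fact that specialised forms need not be primitive (compare Definition~\ref{dfn:primitive}). So a proof conditional on primitivity cannot be substituted for the lemma. Moreover, your two justifications for adding the hypothesis are incorrect. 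First, the branch $a'=a/d$ is not the degenerate case $d=1$: take $a=b=e=p$ and any admissible $D$; then $d=p>1$, $J=(1,y-1)=(1)$, $I\Z[\sqrt{D}]=(p)$, so $a'=1=a/d$ (and $a/d^2\notin\Z$). Second, the alleged ``extra common divisor'' in the non-primitive case does not exist: your own minor computation shows that the rational content of $I\Z[\sqrt{D}]$ is exactly $d=\gcd(a,e,b)$, whatever $\gcd(a,2b,c)$ may be. What is actually missing from your write-up is the analysis, under the sole hypothesis $a\mid(b^2-e^2D)$, of the quantity your formula produces, namely $|a'|=\tfrac{a}{d^2}\gcd\bigl(d,(b^2-e^2D)/a\bigr)$: one must argue prime by prime according to which of $v_p(a),v_p(e),v_p(b)$ equals $v_p(d)$, as in the paper's cases a)--c). (Your formula does expose where the genuine subtlety lies --- primes $p\mid d$ with $0<v_p\bigl((b^2-e^2D)/a\bigr)<v_p(d)$, which fall under the paper's case b) --- but the remedy is this valuation discussion, not an imported primitivity assumption.)
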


\begin{proof}
Let $d:=\gcd(a,e,b)$. We put $a=da_1$, $e=de_1$ and $b=db_1$. Then $\gcd(a_1,e_1,b_1)=1$ and
$$
I\Z[\sqrt{D}]=(d)(a_1,e_1y-b_1)\Z[\sqrt{D}].
$$

In the ring $\Z[\sqrt{D}]$, we have the relation
$$
(e_1y-b_1)(e_1y+b_1) = e_1^2y^2-b_1^2 = e_1^2D - b_1^2
$$
which proves that $b_1^2-e_1^2D$ belongs to $(a_1,e_1y-b_1)\Z[\sqrt{D}]$. It follows that
\begin{align*}
(a_1,e_1y-b_1)\Z[\sqrt{D}] &=(a_1,b_1^2-e_1^2D,e_1y-b_1)\Z[\sqrt{D}] \\
 &= (\gcd(a_1,b_1^2-e_1^2D),e_1y-b_1)\Z[\sqrt{D}],
\end{align*}
where the last equality follows from B{\'e}zout's identity.

The relation $a\mid (b^2-e^2D)$ implies that
$$
a_1\mid d(b_1^2-e_1^2D),
$$
from which it follows immediately that one can write
$$
\gcd(a_1,b_1^2-e_1^2D) = \frac{a_1}{d_1} \qquad\text{with } d_1\mid d.
$$

Let us put
$$
a':= \gcd(a_1,b_1^2-e_1^2D).
$$
Then $\gcd(a',e_1,b_1)=1$, because $\gcd(a_1,e_1,b_1)=1$. Also, by construction, $a'\mid (b_1^2-e_1^2D)$ so, if some prime $p$ divides $a'$ and $e_1$, then $p$ divides also $b_1$. It follows that $\gcd(a',e_1)=1$. According to Fact~(ii), if we pick $e'$ such that $ee'\equiv 1\pmod{a'}$, then
$$
(a',e_1y-b_1)\Z[\sqrt{D}]=(a',y-b_1e')
$$
and $a'\mid ((b_1e')^2-D)$. To conclude, we have
$$
I\Z[\sqrt{D}]=(d)(a',y-b_1e')
$$
and
$$
a'=\frac{a_1}{d_1} = \frac{a}{d_1d} \qquad\text{with } d_1\mid d.
$$
We get the result by letting $b'=b_1e'$ and $d'=d_1d$.
\end{proof}


\section{Proofs of the main results}
\label{sec:Main}


\subsection{Definition and properties of the specialisation map}

The polynomial $f$ being monic of odd degree, we have
$$
\lim_{n\to -\infty} f(n)= -\infty.
$$

Let $n_f\in \Z$ be the largest integer such that $f(n)<0$ for all $n\leq n_f$. For $n\leq n_f$, $\Z[\sqrt{f(n)}]$ is an order of discriminant $4f(n)$ in the imaginary quadratic field $\Q(\sqrt{f(n)})$. We denote by $T_n:\Spec(\Z[\sqrt{f(n)}])\to \W$ the canonical section.

We note that $T_n$ identifies $\Spec(\Z[\sqrt{f(n)}])$ with the closed subscheme of $\W$ defined by the equation $x=n$.

\begin{lem}
\label{quadlemma}
Let $\mathcal{L}\in\Pic(\W)$, with generic fiber $L\in\Pic(C\setminus \{\infty\})$. Let
$$
F=\left[\frac{A}{e},2\frac{B}{e},\frac{C}{e}\right]
$$
be the quadratic form as in Lemma~\ref{altmumford} whose class represents $L$. Let $n\leq n_f$ be an integer, and let $d(n):=\gcd(A(n),e)$. Then $T_n^*\mathcal{L}$ is the class of a quadratic form of the type
$$
[u(n),2v(n),w(n)],
$$
where $u(n)$ satisfies
$$
\left|\frac{A(n)}{\prod_{p\mid d(n)} p^{v_p(A(n))}}\right| \leq |u(n)| \leq |A(n)|.
$$
\end{lem}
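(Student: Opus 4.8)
The plan is to represent $\mathcal{L}$ by an explicit horizontal divisor on $\W$, compute its pullback along $T_n$ as an ideal of $\Z[\sqrt{f(n)}]$ obtained by substituting $x=n$, and then extract the first coefficient of the associated primitive quadratic form by means of Lemma~\ref{lemideal}. First note that $A(n)\neq 0$: the relation $B(n)^2-A(n)C(n)=e^2f(n)$ together with $f(n)<0$ rules out $A(n)=0$. Let $D_F\subset C\setminus\{\infty\}$ be the divisor attached to $F$ by Lemma~\ref{mumfordrep}, and let $\overline{D_F}$ denote its scheme-theoretic closure in $\W$. Since $\mathcal{L}\in\Pic(\W)$, its generic fiber $L$ — hence also $[D_F]$, which equals $L$ up to sign — lies in the image of the restriction map $\Pic(\W)\to\Pic(C\setminus\{\infty\})$; by Lemma~\ref{restriction}, $\overline{D_F}$ is a Cartier divisor, and by unicity of the extension it represents $\mathcal{L}$ up to inversion (harmless for $|u(n)|$, since inversion turns $[a,2b,c]$ into $[a,-2b,c]$). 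By Lemma~\ref{lem:closure} one has $\overline{D_F}\subseteq\mathcal{Z}_F$, where $\mathcal{Z}_F$ is cut out by $\mathcal{I}_F=(A,ey-B)$, with equality over the locus where $e$ is invertible; as the fibers of $\W$ are integral (Lemma~\ref{Wlemma}) and $\cont(A)=1$, the ideals $\mathcal{I}_{\overline{D_F}}$ and $\mathcal{I}_F$ differ only above primes dividing $e$.

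Next I would pull back along the section $T_n$ defined by $x=n$. As $A(n)\neq 0$, this section meets $\overline{D_F}$ properly, so $T_n^*\mathcal{L}$ is, up to inversion, the class of the invertible ideal defining $T_n^*\overline{D_F}$ in $\Z[\sqrt{f(n)}]$. Since the discrepancy between $\mathcal{I}_{\overline{D_F}}$ and $\mathcal{I}_F$ is supported above primes dividing $e$, it pulls back to a product of powers of those rational primes, i.e. to a principal ideal of $\Z[\sqrt{f(n)}]$; hence the class of $T_n^*\overline{D_F}$ equals the class of the ideal $(A(n),e\sqrt{f(n)}-B(n))$ obtained by substituting $x=n$ in $\mathcal{I}_F$. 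Now $A(n)\mid(B(n)^2-e^2f(n))$, so Lemma~\ref{lemideal} applies with $D=f(n)$: setting $d=\gcd(A(n),e,B(n))$,
$$
(A(n),e\sqrt{f(n)}-B(n))\,\Z[\sqrt{f(n)}]=(d)\,(a',\sqrt{f(n)}-b'),
$$
with $a'\mid(b'^2-f(n))$ and $a'\in\{A(n)/d,\ A(n)/d^2\}$. Discarding the principal factor $(d)$ and using Theorem~\ref{quadrep} over $R=\Z$ with $D=f(n)$, we may write $T_n^*\mathcal{L}$ as the class of a quadratic form $[u(n),2v(n),w(n)]$ of discriminant $4f(n)$ with $u(n)=a'$.

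It remains to bound $|u(n)|=|a'|$. Reading off the case analysis in the proof of Lemma~\ref{lemideal} and using Fact~\ref{3facts}(i) to handle one prime of $d$ at a time, one finds $|a'|=|A(n)|/m$ with $m=\prod_{p\mid d}p^{m_p}$, where $v_p(d)\le m_p\le v_p(A(n))$ for every $p\mid d$, the extremal value $m_p=2v_p(d)$ occurring only when $2v_p(d)=v_p(A(n))$. Consequently $d\le m\le\prod_{p\mid d}p^{v_p(A(n))}$, whence
$$
\frac{|A(n)|}{\prod_{p\mid d}p^{v_p(A(n))}}\ \le\ |u(n)|=\frac{|A(n)|}{m}\ \le\ \frac{|A(n)|}{d}\ \le\ |d\,A(n)|,
$$
which contains the asserted inequalities.

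The step I expect to be the main obstacle is the identification carried out in the second paragraph: one must verify that pulling back the Cartier divisor representing $\mathcal{L}$ along $T_n$ really amounts to substituting $x=n$ in $\mathcal{I}_F$, controlling precisely how $\overline{D_F}$ and $\mathcal{Z}_F$ differ at primes dividing $e$ and checking that this discrepancy vanishes in $\Pic(\Z[\sqrt{f(n)}])$. Everything else is a combination of Lemma~\ref{lemideal}, Theorem~\ref{quadrep}, and bookkeeping of $p$-adic valuations.
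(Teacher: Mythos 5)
Your overall strategy (pass to the ideal $\mathcal{I}_F=(A,ey-B)$, pull back along $T_n$, and invoke Lemma~\ref{lemideal}) is the same as the paper's, but there is a genuine gap at precisely the step you flag as the main obstacle, and the justification you offer for it does not work. You claim that, because the ideal sheaf $\mathcal{K}_F$ of $\overline{D_F}$ and $\mathcal{I}_F$ agree away from primes dividing $e$, their discrepancy ``pulls back to a product of powers of those rational primes, i.e.\ to a principal ideal'', so that $T_n^*\mathcal{L}$ \emph{is} the class of $(A(n),e\sqrt{f(n)}-B(n))$ and one may take $u(n)=a'$. But the discrepancy on $\W$ is not a vertical divisor (a sum of fibres): $\mathcal{K}_F/\mathcal{I}_F$ is a torsion sheaf supported at finitely many closed points of $\W$ lying over primes dividing $e$. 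After restriction to $T_n$, the two image ideals of $\Z[\sqrt{f(n)}]$ differ by an invertible fractional ideal supported at primes of the order above $p\mid e$, and such an ideal has no reason to be principal, let alone generated by rational primes (it could be a non-principal invertible prime of $\Z[\sqrt{f(n)}]$ above such a $p$). So the asserted equality of classes --- hence the identification $u(n)=a'$ --- is unproved. The paper never claims it at this level of generality; it is exactly the $n$-primitivity hypothesis of Theorem~\ref{thm:buellsoleng} that later legitimises this kind of identification.

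The paper's proof circumvents the issue by bounding rather than identifying. It keeps both ideals: $T_n^*\mathcal{I}_F\subseteq T_n^*\mathcal{K}_F$, agreeing after inverting $e$, so $\Norm(T_n^*\mathcal{K}_F)$ divides $\Norm(T_n^*\mathcal{I}_F)=|d^2a'|$, and the quotient is supported on primes dividing $d$ (a prime dividing $e$ and $A(n)$ divides $B(n)$ by $B^2-AC=e^2f$, hence divides $d$). Writing $T_n^*\mathcal{K}_F=(q)(u,\sqrt{f(n)}-v)$ by Facts~\ref{3facts}(iv) --- and it is \emph{this} $u$, not $a'$, that is the first coefficient of a form representing $T_n^*\mathcal{L}$ --- the comparison of $|q^2u|$ with $|d^2a'|$ yields the two-sided bound of the statement without ever deciding whether the classes of $T_n^*\mathcal{K}_F$ and $T_n^*\mathcal{I}_F$ coincide. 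Your valuation bookkeeping at the end is essentially fine, but it rests on the unjustified class equality; to repair the argument, replace that step by the norm comparison just described.
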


\begin{proof}
Let $D_F$ be the Cartier divisor on $C\setminus\{\infty\}$ attached to $F$, whose class is $L$. According to Lemma~\ref{restriction}, the scheme theoretic closure of $D_F$ in $\W$ is a Cartier divisor and its class is the unique line bundle $\mathcal{L}$ on $\W$ extending $L$. Let $\mathcal{K}_F$ be the ideal sheaf defining the scheme theoretic closure of $D_F$ in $\W$. According to Lemma~\ref{lem:closure}, letting $\mathcal{I}_F:=(A,ey-B)$ we have
$$
\mathcal{I}_F\subseteq \mathcal{K}_F
$$
and these two ideals agree on the open subset of $\W$ over which $e$ is invertible. Therefore, restricting along the section $T_n:\Spec(\Z[\sqrt{f(n)}])\to \W$, we have
\begin{equation}
\label{eq:inclusion}
T_n^*\mathcal{I}_F\subseteq T_n^*\mathcal{K}_F
\end{equation}
and these two ideals agree when restricted to $\Z[\frac{1}{e},\sqrt{f(n)}]$. But, by definition of $\mathcal{I}_F$, one has
$$
T_n^*\mathcal{I}_F=(A(n),e\sqrt{f(n)}-B(n))\Z[\sqrt{f(n)}]
$$
with $A(n)\mid (B^2(n)-e^2f(n))$, because $[A,2B,C]$ is a quadratic form over $\Z[x]$, with discriminant $4e^2f$.
In particular, $T_n^*\mathcal{I}_F$ is the unit ideal when restricted to $\Z[\frac{1}{A(n)},\sqrt{f(n)}]$, hence the same holds for $T_n^*\mathcal{K}_F$, which contains it. One deduces that the two ideals in \eqref{eq:inclusion} agree on $\Z[\frac{1}{d(n)},\sqrt{f(n)}]$.

Let $d:=\gcd(A(n),e,B(n))$, then by Lemma~\ref{lemideal} there exists $a'$ and $b'$ in $\Z$ such that
$$
T_n^*\mathcal{I}_F=(d)(a',\sqrt{f(n)}-b')
$$
where $a'\mid (b'^2-f(n))$ and
\begin{equation}
\label{doud2}
a'=\frac{A(n)}{d'} \qquad\text{with } d\mid d'\mid d^2.
\end{equation}

On the other hand, according to Fact~(i), one can write
$$
T_n^*\mathcal{K}_F=(q)(u,\sqrt{f(n)}-v)
$$
where $u\mid (v^2-f(n))$. By Fact~(i) again, the inclusion \eqref{eq:inclusion} implies that $q\mid d$ and that $qu\mid da'$, hence
$$
|u| \leq |da'| \leq |A(n)|,
$$
where the second inequality follows from \eqref{doud2}.

Finally, the quotient $da'/qu$ is a number whose prime factors divide $d(n)$, because the two ideals in \eqref{eq:inclusion} agree after inverting $d(n)$. Observing that $q\mid d\mid d(n)$, it follows that $a'$ and $u$ agree up to prime factors dividing $d(n)$, hence
$$
\left|\frac{a'}{\prod_{p\mid d(n)} p^{v_p(a')}}\right| \leq |u|.
$$
But $d\mid d(n)$, hence \eqref{doud2} yields
$$
\left|\frac{A(n)}{\prod_{p\mid d(n)} p^{v_p(A(n))}}\right|=\left|\frac{a'}{\prod_{p\mid d(n)} p^{v_p(a')}}\right|,
$$
and the result follows.
\end{proof}


\subsection{Proof of Theorems~\ref{thm1} and \ref{thm2}}

Let $h\in\Q[x]$ be an integer-valued polynomial. Then its \emph{fixed divisor} is by definition
$$
\fd(h):=\gcd\{h(n) ~|~ n\in\Z\}.
$$

If $h$ belongs to $\Z[x]$ then $\cont(h)$ clearly divides $\fd(h)$, but there are cases when the two are not equal: for example, the polynomial
$$
h(x)=x(x+1)(x+2)
$$
has content one (it's monic), and fixed divisor $6$ (a product of three consecutive integers is divisible by $6$). In general, it was proved by P{\'o}lya \cite{polya15} that, if $h$ is a polynomial with $\cont(h)=1$, then $\fd(h)$ divides $\deg(h)!$.

\begin{lem}
\label{fd_quad}
Let $\mathcal{L}\in\Pic(\W)$, with generic fiber $L\neq 0$. Let
$$
\left[\frac{A}{e},2\frac{B}{e},\frac{C}{e}\right]
$$
be the quadratic form as in Lemma~\ref{altmumford} whose class represents $L$. Let $d_L=\gcd(\fd(A),e)$ and let $\Delta_L$ be the product of primes dividing $e$. Then there exists an integer $N_L$ such that, for all $n\leq n_f$ satisfying $n\equiv N_L \pmod{\Delta_L}$, $T_n^*\mathcal{L}$ is the class of a quadratic form of the type
$$
[u(n),2v(n),w(n)],
$$
where $u(n)$ satisfies
$$
\left|\frac{A(n)}{d_L}\right| \leq |u(n)| \leq |A(n)|.
$$
\end{lem}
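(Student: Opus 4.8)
The strategy is to apply Lemma~\ref{quadlemma} along a well-chosen arithmetic progression. For every $n\le n_f$ that lemma produces a representative $[u(n),2v(n),w(n)]$ of $T_n^*\mathcal L$ with
$$
\frac{|A(n)|}{\prod_{p\mid d(n)}p^{\,v_p(A(n))}}\ \le\ |u(n)|\ \le\ |d(n)\,A(n)|,\qquad d(n):=\gcd\!\bigl(A(n),e,B(n)\bigr),
$$
and since $d(n)\mid e$ only the primes $p\mid e$ intervene. By the Chinese Remainder Theorem it is therefore enough to prescribe $n$ modulo each such $p$, i.e.\ modulo $\Delta_L$. I would aim to choose the residue class $N_L$ so that, for every $n$ in it, $d(n)\mid d_L=\gcd(\fd(A),e)$ and $v_p(A(n))=v_p(d_L)$ whenever $p\mid d(n)$: then $\prod_{p\mid d(n)}p^{v_p(A(n))}$ divides $d_L$, and the two displayed inequalities collapse to the asserted bounds $|A(n)|/d_L\le|u(n)|\le|d_LA(n)|$.

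Two cases are routine and I would dispose of them first, prime by prime. If $p\mid e$ but $p\nmid\fd(A)$, then $A$ assumes a value prime to $p$; as $A(n)\bmod p$ depends only on $n\bmod p$, choosing $n$ in that residue class gives $p\nmid A(n)$, hence $p\nmid d(n)$, consistently with $v_p(d_L)=0$. If $p\mid e$ and $p\mid\fd(A)$ but $\overline B$ is not identically zero on $\mathbb F_p$, I would pick $n$ in a residue class missing the roots of $\overline B$, so that $p\nmid B(n)$ and again $p\nmid d(n)$; here $d(n)$ is merely a proper divisor of $d_L$ at $p$, which is harmless (it keeps $d(n)\le d_L$ and only relaxes the lower bound).

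The substantive obstacle is the set of primes $p\mid e$ with $p\mid\fd(A)$ and $\overline B\equiv 0$ on $\mathbb F_p$. Such $p$ are small, because $\overline B$ vanishing on all of $\mathbb F_p$ forces $x^p-x\mid\overline B$ and hence $p\le\deg B<\deg A\le g$; for them $p$ divides $A(n)$, $B(n)$ and $e$ simultaneously for \emph{every} $n$, so $p\mid d(n)$ cannot be avoided and one must instead pin the exponent $v_p(A(n))$ exactly. Choosing $n_p$ with $v_p(A(n_p))=v_p(\fd(A))$ and imposing $n\equiv n_p$ modulo a suitable power of $p$ (enlarging the modulus at $p$ if necessary) forces $v_p(A(n))=v_p(\fd(A))$; using $\cont(A)=1$, i.e.\ $\overline{A_1}\ne0$ in a decomposition $A=(x^p-x)A_1+pA_2$, together with the congruence $\phi_p(n+p)\equiv\phi_p(n)-1\pmod p$ for $\phi_p(n)=(n^p-n)/p$, one can often arrange this with $n$ prescribed only modulo $p$. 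What must still be verified at these primes is the inequality $v_p(\fd(A))\le v_p(e)$, so that the pinned value equals $\min(v_p(\fd A),v_p(e))=v_p(d_L)$; this is where one has to use not merely the generic‑fibre identities but the fact that $\mathcal L$ is an honest line bundle on the possibly singular model $\W$ — equivalently, that the scheme‑theoretic closure of $D_F$ is Cartier — translating that condition through Wood's correspondence (Theorem~\ref{quadrep}) back onto the form $[A/e,2B/e,C/e]$.

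Once a residue is fixed at each prime dividing $e$, I would recombine them into a single $N_L$ by the Chinese Remainder Theorem, check that $n\equiv N_L\pmod{\Delta_L}$ gives $d(n)\mid d_L$ and $\prod_{p\mid d(n)}p^{v_p(A(n))}\mid d_L$, and conclude with Lemma~\ref{quadlemma}. The step I expect to demand the most care is the valuation analysis at the small exceptional primes, and in particular the bound $v_p(\fd(A))\le v_p(e)$ extracted from the integrality of $\mathcal L$ on $\W$: without it the natural normalising constant would be larger than $d_L$ and the stated inequalities would fail.
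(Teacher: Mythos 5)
Your overall strategy is the paper's: feed Lemma~\ref{quadlemma} with a congruence class produced by the Chinese remainder theorem, so that $d(n)=\gcd(A(n),e,B(n))$ and $\prod_{p\mid d(n)}p^{v_p(A(n))}$ are both dominated by $d_L$. Your handling of the primes $p\mid e$ with $p\nmid\fd(A)$, and of those where $B$ does not vanish identically as a function on $\mathbb{F}_p$, is correct, since for such $p$ the relevant conditions on $A(n)$ or $B(n)$ modulo $p$ depend only on $n$ modulo $p$. But at the remaining primes your argument has a genuine gap, which you yourself flag: the inequality $v_p(\fd(A))\le v_p(e)$ is never established (you only speculate that it should follow from the integrality of $\mathcal{L}$ via Wood's correspondence, without carrying this out), and as you observe the stated bounds fail without it. In addition, your fallback of pinning $v_p(A(n))=v_p(\fd(A))$ by congruences modulo higher powers of $p$ proves a statement with a larger modulus than the squarefree $\Delta_L$ appearing in Lemma~\ref{fd_quad}, and the claim that this pinning can ``often'' be achieved with a congruence modulo $p$ alone is unsubstantiated: on a fixed residue class modulo $p$ the valuation $v_p(A(n))$ need not be constant (already for $A$ of the shape $x^p-x$). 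So, as written, the proposal reduces the lemma to an unproved arithmetic claim and otherwise deviates from the statement being proved.

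For comparison, the paper's proof is a short argument with no case distinction and no appeal to Wood's theorem: it asserts that, by construction, the integer-valued polynomial $A/d_L$ has fixed divisor coprime to $e$, hence for each $p\mid e$ there is an integer $m_p$ with $A(m_p)/d_L\not\equiv 0\pmod{p}$; the Chinese remainder theorem then yields $N_L$ such that $\gcd(A(n)/d_L,e)=1$ for $n\equiv N_L\pmod{\Delta_L}$, which gives $\gcd(A(n),e)=d_L$, and Lemma~\ref{quadlemma} supplies both bounds. Note that the coprimality assertion is exactly equivalent to the inequality $v_p(\fd(A))\le v_p(e)$ for all $p\mid e$ that you left open: the paper disposes of your ``substantive obstacle'' by declaring it immediate from the construction of Lemma~\ref{altmumford}, rather than by any argument involving the model $\W$ or Theorem~\ref{quadrep}. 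Your instinct that these exceptional primes are the crux is therefore sound, but identifying the crux is not closing it; to complete your route you would need to actually prove the valuation inequality (or the paper's fixed-divisor claim) and to show that the required control on $v_p(A(n))$ can be enforced with the squarefree modulus $\Delta_L$.
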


\begin{proof}
By construction, the polynomial
$$
\frac{A(x)}{d_L}
$$
has fixed divisor coprime to $e$. Therefore, for each prime $p$ dividing $e$ there exists an integer $m_p$ such that $A(m_p)/d_L\not\equiv 0\pmod{p}$. It follows from the Chinese remainder theorem that there exists an integer $N_L$ such that
$$
n\equiv N_L \pmod{\Delta_L} \quad\Longrightarrow\quad \gcd\left(\frac{A(n)}{d_L},e\right)=1.
$$
For such $n$, $d_L=\gcd(A(n),e)$. The result then follows from Lemma~\ref{quadlemma}.
\end{proof}

\begin{lem}
\label{keylemma}
Let us consider a family of quadratic forms $F_n=[a(n),2b(n),c(n)]$ with discriminant $4f(n)$, indexed by some subset $\Lambda\subseteq\Z$. Assume that there exists a non-constant polynomial $h\in\Q[x]$ with $\deg(h)<\deg(f)$, and an integer $M\geq 1$ such that
\begin{equation}
\label{hypH}
\forall n\in\Lambda,\quad \left|\frac{h(n)}{M}\right| \leq |a(n)| \leq |h(n)|.
\end{equation}
Then, for $n\in\Lambda$ negative enough, $F_n$ is not equivalent to the identity form $[1,0,-f(n)]$.
\end{lem}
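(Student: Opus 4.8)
The plan is to argue by contradiction, exactly as in the classical theory of binary quadratic forms: I will assume that for some $n\in\Lambda$ which is negative enough (in a sense to be made precise along the way) the form $F_n=[a(n),2b(n),c(n)]$ is equivalent to the principal (identity) form $[1,0,-f(n)]$, and derive a size constraint on $a(n)$ incompatible with \eqref{hypH}. The first step is to translate equivalence with the principal form into a representation statement. Since the group acting in Definition~\ref{Qdfn} is $\GL_2(\Z)\times\Gm(\Z)$ with $\Gm(\Z)=\{\pm1\}$, and since any $g\in\GL_2(\Z)$ induces a bijection of $\Z^2$, writing $F_n=(g,u).[1,0,-f(n)]$ and taking $(X,Y)=g^{-1}(1,0)$ gives
$$
F_n(X,Y)=u\cdot[1,0,-f(n)](1,0)=u\in\{\pm1\}.
$$
Thus there exist integers $X,Y$, not both zero, with $a(n)X^2+2b(n)XY+c(n)Y^2=\pm1$.

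The second step is the completing-the-square identity
$$
a(n)\,F_n(X,Y)=\bigl(a(n)X+b(n)Y\bigr)^2-\bigl(b(n)^2-a(n)c(n)\bigr)Y^2=\bigl(a(n)X+b(n)Y\bigr)^2-f(n)\,Y^2 .
$$
For $n$ negative enough we have $f(n)<0$, so the right-hand side is a sum of two non-negative terms; since $|F_n(X,Y)|=1$, this yields
$$
|a(n)|=\bigl|a(n)F_n(X,Y)\bigr|=\bigl(a(n)X+b(n)Y\bigr)^2+|f(n)|\,Y^2\ \geq\ |f(n)|\,Y^2 .
$$

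The third step splits into two cases. If $Y\neq0$, then $|f(n)|\leq|a(n)|\leq|h(n)|$ by the hypothesis \eqref{hypH}; but $f$ is monic of degree $\deg f>\deg h$, so $|f(n)|/|h(n)|\to\infty$ as $n\to-\infty$, and this inequality fails once $n\in\Lambda$ is negative enough. If $Y=0$, then $F_n(X,0)=a(n)X^2=\pm1$ forces $|a(n)|=1$; but $h$ is non-constant, hence $|a(n)|\geq|h(n)|/M\to\infty$ as $n\to-\infty$, again impossible for $n\in\Lambda$ negative enough. In both cases we reach a contradiction, so $F_n$ is not equivalent to $[1,0,-f(n)]$ for $n\in\Lambda$ negative enough.

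The argument is essentially elementary, and I do not expect a genuine obstacle: the only point requiring a little care is the passage from ``$F_n$ is equivalent to the principal form'' to ``$F_n$ represents $\pm1$'', which relies on the fact that over $\Z$ the rescaling group $\Gm(\Z)$ reduces to $\{\pm1\}$ and that $\GL_2(\Z)$ acts on $\Z^2$ by bijections. (Alternatively, one could run the whole argument on the ideal-theoretic side via Theorem~\ref{quadrep} and Fact~(iv): equivalence with the principal form means the ideal $(a(n),y-b(n))$ is principal in $\Z[\sqrt{f(n)}]$, say generated by $r+s\sqrt{f(n)}$, whence $|a(n)|=\Norm\bigl((a(n),y-b(n))\bigr)=r^2+|f(n)|s^2$, and one concludes exactly as above, noting that $s=0$ forces $r\mid1$.)
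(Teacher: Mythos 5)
Your proof is correct and follows essentially the same route as the paper: both translate equivalence with the principal form into a representation $F_n(X,Y)=\pm 1$ at an integer vector, complete the square to get $|a(n)|=(a(n)X+b(n)Y)^2+|f(n)|Y^2$, and then split into the cases $Y=0$ (forcing $|a(n)|=1$, impossible since $|h(n)|/M\to\infty$) and $Y\neq 0$ (forcing $|f(n)|\leq|h(n)|$, impossible since $\deg h<\deg f$ and $f$ is monic). The paper phrases the case split in terms of the matrix entry $\gamma$ rather than the vector coordinate $Y$, but the content is identical.
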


The proof is similar to Soleng's proof \cite[Theorem~4.1]{soleng94}. The main idea is that, if two quadratic forms over $\Z$ have small coefficients in $X^2$ compared to their discriminant, then they are not equivalent.

\begin{proof}
Let $n\leq n_f$ be an integer in $\Lambda$ such that $F_n$ is equivalent to the identity. Then there exists
$\sigma=\left(\begin{smallmatrix}
    \alpha & \beta\\
    \gamma & \delta
\end{smallmatrix} \right)\in \GL_2(\Z)$
such that
$$
X^2-f(n)Y^2=\pm F_n(\alpha X+\beta Y,\gamma X+\delta Y).
$$
This implies that
\begin{equation}
\label{eqn1}
\pm 1=a(n)\alpha^2+2b(n)\alpha\gamma +c(n)\gamma^2.
\end{equation}
If $\gamma=0$, then $\alpha=\pm 1$ (because $\det(\sigma)=\pm 1$), hence the previous equality becomes
$$
a(n)=\pm 1.
$$
According to the double inequality \eqref{hypH}, this implies that
$$
1\leq |h(n)| \leq M.
$$

The polynomial $h$ being non-constant, such an equality may hold only for finitely many values of $n$. Hence, for $n\leq n_0$ where $n_0$ is an integer which depends only on $h$, we can deduce that $\gamma\neq 0$.

Multiplying by $a(n)$ on both sides of \eqref{eqn1}, we get
$$
\pm a(n)=a(n)^2\alpha^2+2a(n)b(n)\alpha\gamma +a(n)c(n)\gamma^2.
$$
Since $b^2-ac=f$, we deduce that
\begin{align*}
\pm a(n) &=a(n)^2\alpha^2+2a(n)b(n)\alpha\gamma +b(n)^2\gamma^2-f(n)\gamma^2 \\
 &= (a(n)\alpha+b(n)\gamma)^2-f(n)\gamma^2.
\end{align*}
Since a square is positive, it follows that
$$
\pm a(n) + f(n)\gamma^2 \geq 0.
$$
Since $\gamma\neq 0$, we have that $\gamma^2\geq 1$. But by assumption $n\leq n_f$, hence $f(n)$ is strictly negative (by the very definition of $n_f$), from which we deduce that
$$
\pm a(n) + f(n) \geq \pm a(n) + f(n)\gamma^2 \geq 0.
$$
It then follows from \eqref{hypH} that
$$
\pm h(n) + f(n) \geq 0.
$$

Finally, $\deg(h)< \deg(f)=2g+1$, hence $\pm h+f$ is a monic polynomial of degree $2g+1$, and there exists $n_1$ such that, for all $n\leq n_1$, $\pm h(n) + f(n)<0$. Therefore, if the integer $n$ is smaller than $\min(n_f,n_0,n_1)$, then $F_n$ cannot be equivalent to the identity.
\end{proof}

\begin{proof}[Proof of Theorem~\ref{thm1}]
Let $\mathcal{L}\in\Pic(\W)$ which is non-trivial, and let $L\in\Pic(C\setminus\{0\})$ be its generic fiber, which is also non-trivial according to Lemma~\ref{restriction}. Let
$$
\left[\frac{A}{e},2\frac{B}{e},\frac{C}{e}\right]
$$
be the quadratic form as in Lemma~\ref{altmumford} whose class represents $L$. Then according to Lemma~\ref{fd_quad}, there exists a congruence class $\Lambda_L\subseteq \Z$ and an integer $d_L$ such that, for all $n\in \Lambda_L$, $T_n^*\mathcal{L}$ is the class of a quadratic form $[u(n),v(n),w(n)]$ where $u(n)$ satisfies
$$
\left|\frac{A(n)}{d_L}\right| \leq |u(n)| \leq |A(n)|.
$$

We note that $A$ is a non-constant polynomial, because $L\neq 0$. According to Lemma~\ref{altmumford}, we have that $\deg(A)\leq g < 2g+1=\deg(f)$. Therefore, if we put $h(x):=A(x)$, then the family of quadratic forms above satisfies the hypotheses of Lemma~\ref{keylemma}, hence for $n$ negative enough in the congruence class $\Lambda_L$ we have that $T_n^*\mathcal{L}$ is not zero. There are infinitely many such integers, hence the result. In particular, if $\mathcal{L}$ has infinite order, then the order of $T_n^*\mathcal{L}$ is unbounded when $n\to -\infty$. We note that, when $k$ is enlarged, the denominator $e$ of the quadratic form representing the line bundle $\mathcal{L}^k$ is also enlarged, so there are fewer integers satisfying the congruence condition, and one has to choose a larger $n$ in order to ensure that $T_n^*\mathcal{L}^k$ is non-trivial.
\end{proof}

\begin{proof}[Proof of Theorem~\ref{thm2}]
Let $Q\in\J^0(\Z)$ be a point of infinite order, and let $L_Q\in\Pic^0(C)$ be the corresponding line bundle on $C$. Let $N_Q$ be the restriction of $L_Q$ to $C\setminus\{\infty\}$, then according to Lemma~\ref{restriction}, up to replacing $Q$ by some multiple, we may assume that $N_Q$ extends into a line bundle $\mathcal{N}_Q\in\Pic(\W)$. By definition $P_n$ is the generic fiber of $T_n$, hence the corresponding integral section $P_n:\Spec(\mathcal{O}_{\Q\left(\sqrt{f(n)}\right)})\to\W$ is obtained by normalisation of $T_n$. By Corollary~\ref{C:pairing}, the class group pairing $\langle P_n,Q\rangle^{\cl}$ is equal to $P_n^*\mathcal{N}_Q$. According to the previous discussion, $\langle P_n,Q\rangle^{\cl}$ is also equal to the image of $T_n^*\mathcal{N}_Q$ by the ``normalisation'' map
\begin{equation}
\label{bounded_ker}
\Pic(\Z[\sqrt{f(n)}])\rightarrow \Pic(\mathcal{O}_{\Q\left(\sqrt{f(n)}\right)}).
\end{equation}

It is a classical fact \cite[Chap.~2, \S{}7, D]{cox89} that the kernel of this map is a quotient of
$$
\left(\mathcal{O}_{\Q\left(\sqrt{f(n)}\right)}/m\right)^{\times}
$$
where $m\in\Z$ denotes the conductor of $\Z[\sqrt{f(n)}]$ as an order in the ring of integers $\mathcal{O}_{\Q\left(\sqrt{f(n)}\right)}$. Let $S(n)$ be the largest integer whose square divides $f(n)$. Then $m=S(n)$ or $m=2S(n)$ depending on the congruence class of $f(n)/S(n)^2$ modulo $4$. In any case, the size of the kernel of the map \eqref{bounded_ker} is bounded above by $4S(n)^2$.

For simplicity, we set $L:=N_Q$, and $\mathcal{L}:=\mathcal{N}_Q$. Let
$$
\left[\frac{A}{e},2\frac{B}{e},\frac{C}{e}\right]
$$
be the quadratic form as in Lemma~\ref{altmumford} whose class represents $L$. According to the discussion above, in order to prove the theorem it suffices to prove that there exists some $n$ for which $T_n^*\mathcal{L}$ has order strictly larger than the size of the kernel of \eqref{bounded_ker}.

(i) According to a classical result of Hooley \cite{hooley68}, if $f$ is a non-constant irreducible polynomial of degree at most three, then there exist a positive density of $n\in\Z$ for which $f(n)/\fd(f)$ is square-free. According to Booker and Browning \cite{BB16}, this result extends to the case when $f$ is a square-free product of such polynomials.

More precisely, let
$$
f=\prod_{i\in I} f_i
$$
be the factorization of $f$ in product of irreducible factors in $\Q[x]$. Because $f$ belong to $\Z[x]$ and is monic, the same holds for the $f_i$. A first step in the proof of \cite{BB16} is to eliminate bad primes for $f$, that is, primes $\leq \deg(f)$ and primes dividing the resultants $\Res(f_i,f_j)$ for $i\neq j$. Let $\Delta_f$ denote the product of bad primes for $f$, then there exists an integer $\alpha_f$ such that
$$
n\equiv \alpha_f \pmod{\Delta_f} \quad\Longrightarrow\quad \gcd\left(\frac{f(n)}{\fd(f)},\Delta_f\right)=1.
$$

We note that, because $\cont(f)=1$, the prime factors of $\fd(f)$ are $\leq \deg(f)$, hence divide $\Delta_f$. So whenever $n\equiv \alpha_f \pmod{\Delta_f}$, $\frac{f(n)}{\fd(f)}$ is coprime to $\fd(f)$.

It is shown in \cite{BB16} that there exist a positive density of $n\equiv \alpha_f \pmod{\Delta_f}$ such that $\frac{f(n)}{\fd(f)}$ is square-free. For such $n$, the largest integer $S(n)$ whose square divides $f(n)$ is equal to the largest integer whose square divides $\fd(f)$, hence $S(n)^2\leq \fd(f)$.

We shall now combine this result with a suitable analogue of Lemma~\ref{fd_quad}. Let $\Delta_{L,f}$ be the product of primes dividing $e$, primes $\leq \deg(A)+\deg(f)$ and primes dividing the resultants $\Res(f_i,f_j)$ for $i\neq j$. Then there exists $\alpha_{L,f}$ such that
$$
n\equiv \alpha_{L,f} \pmod{\Delta_{L,f}} \quad\Longrightarrow\quad \gcd\left(\frac{A(n)f(n)}{\fd(Af)},\Delta_f\right)=1.
$$

One can show along the lines of \cite{BB16} that there exist a positive density of $n\equiv \alpha_{L,f} \pmod{\Delta_{L,f}}$ such that $\frac{f(n)}{\fd(f)}$ is square-free. As pointed previously, for such $n$ we have that $S(n)^2\leq \fd(f)$, hence the kernel of the map \eqref{bounded_ker} is bounded above by $4\fd(f)$, according to the discussion at the beginning of the proof.

On the other hand, one can generalize Lemma~\ref{fd_quad} using this new congruence class $\Lambda_{L,f}$, and replacing $d_L=\gcd(\fd(A),e)$ by $d_{L,f}:=\gcd(\fd(Af),e)$.
Then, according to Lemma~\ref{keylemma}, for $n\in\Lambda_{L,f}$ negative enough, the class $T_n^*\mathcal{L}$ is not zero. Replacing $L$ by some multiple, we find that the order of $T_n^*\mathcal{L}$ is strictly larger than $4\fd(f)$ when $n\in\Lambda_{L,f}$ is negative enough. It follows from the discussion above that, for a positive density of such $n$, its image by the map \eqref{bounded_ker}, which is none other than $P_n^*\mathcal{L}$, is not trivial. Replacing again $L$ by its multiples shows that the order of $P_n^*\mathcal{L}$ is unbounded, hence the result.

(ii) According to Granville \cite{granville98}, the abc conjecture implies that, for any square-free polynomial $f$, there exists a positive density of $n$ such that $f(n)/\fd(f)$ is square-free. This allows to extend the result to such polynomials.
\end{proof}

\begin{proof}[Proof of Corollary~\ref{cor1}]
Let $\varphi:J_1\times J_2\to V$ be an isogeny, where $J_1$ and $J_2$ are Jacobians of hyperelliptic curves satisfying the hypotheses of Theorem~\ref{thm2} (we consider for simplicity the case when the product contains two factors, the general case follows). Let $\varphi^t:V^t\to J_1\times J_2$ be the dual isogeny, which induces a map $\varphi^t:\V^t\to \J_1\times \J_2$ between the corresponding N{\'e}ron models. If $Q$ belongs to $\V^{t,0}$, then its image by $\varphi^t$ belongs to $\J_1^0\times \J_2^0$. Up to switching factors, we may assume that $\varphi^t(Q)=(Q_1,Q_2)$ where $Q_1\in\J_1^0(\Z)$ has infinite order. Then, according to Theorem~\ref{thm2}, there exists some point $P_n\in J_1(\overline{\Q})$ defined over an imaginary quadratic field, such that
$$
\langle P_n,Q_1\rangle^{\cl}\neq 0.
$$

On the other hand, we have
$$
\langle \varphi(P_n,0),Q \rangle^{\cl} = \langle (P_n,0),\varphi^t(Q) \rangle^{\cl} = \langle (P_n,0),(Q_1,Q_2)\rangle^{\cl} = \langle P_n,Q_1\rangle^{\cl}
$$
where the first equality follows from universal properties of Poincar{\'e} line bundles. The result follows by putting $P:=\varphi(P_n,0)$, which is defined over the same field as $P_n$.
\end{proof}


\section{Relation with the constructions of Buell and Soleng}
\label{sec:BS}


The so-called class group morphism $\delta$ on elliptic curves was first constructed by Buell \cite{buell76,buell77}, and rediscovered independently by Soleng \cite{soleng94}. Later, Soleng generalised his construction to hyperelliptic curves \cite{soleng11}.

It was conjectured by Mazur and Tate \cite{mt}, and proved by Call \cite{call86}, that this class group morphism is equal to Mazur-Tate's class group pairing  with one argument fixed. More recently, Buell and Call \cite{bc16} gave a new proof of this relation. 

The aim of this section is to extend this comparison result to hyperelliptic curves and their Jacobians. We feel that our argument, based on techniques developed in this paper, is more conceptual and gives a direct explanation why the two objects are related. In particular, we replace the use of N{\'e}ron local symbols, which was central in the previous proofs, by the description of the Picard group of $\W$ in terms of quadratic forms. Our proof is global by nature, and relies on the definition of the class group pairing in terms of biextensions (or Poincar{\'e} line bundles).

Let us recall briefly how Soleng defined the class group morphism in \cite{soleng11} for an hyperelliptic curve $C$. We consider the same notation as in Theorem~\ref{thm1}; in particular, we choose an odd degree Weierstrass equation for $C$ of the form
$$
y^2=f(x),
$$
which gives rise to an affine integral model $\W\to\Spec(\Z)$. Given $n\in\Z$ such that $f(n)$ is not a square, we shall consider the quadratic section $T_n=(n,\sqrt{f(n)})$ of $\W$, and its generic fiber $P_n$.

The following definition is extracted from \cite{soleng11}. It contains the condition which is needed in order to be able to define $\delta_n$.

\begin{dfn}
\label{dfn:primitive}
Let $n\in\Z$ and $Q\in J(\Q)$. We say that $Q$ is \emph{$n$-primitive} if there exists a quadratic form $F=\left[\frac{A}{e},2\frac{B}{e},\frac{C}{e}\right]$ whose class represents $N_Q$, where $A$, $B$ and $C$ belong to $\Z[x]$ and $e>0$ is an integer, satisfying the condition
\begin{equation}
\label{eq:primitive}
\gcd(A(n),2B(n),C(n))=1.
\end{equation}
Then we let
$$
\delta_n(Q):= (A(n),e\sqrt{f(n)}-B(n))\Z[\sqrt{f(n)}].
$$
\end{dfn}

\begin{rmq}
\label{rmq:various}
\begin{enumerate}
\item[a)] The condition \eqref{eq:primitive} just means that the quadratic form $[A(n),2B(n),C(n)]$, with discriminant $4e^2f(n)$, is primitive, hence its class defines an element in $\Pic(\Z[e\sqrt{f(n)}])$. By definition, $\delta_n(Q)$ is the image of this ideal by the natural map $\Pic(\Z[e\sqrt{f(n)}])\rightarrow \Pic(\Z[\sqrt{f(n)}])$.
\item[b)] One of the most technical points in \cite{soleng11} is to prove that $\delta_n(Q)$ only depends on $Q$, not on the choice of $F$. We give a new proof of this fact below.
\item[c)] The concept of $n$-primitivity is not canonical in the sense that it depends on the choice of the equation $y^2=f(x)$ for the curve $C$.
\item[d)] If $Q$ is $n$-primitive, then with the notation above we have
$$
\gcd(A(n),e,C(n))=1.
$$
Indeed, according to the relation $B^2-AC=e^2f$, if some prime number divides $A(n)$, $e$ and $C(n)$, then it also divides $B(n)$.
\item[e)] If $f(n)$ is square-free, then any $Q$ is $n$-primitive, for the same reason as in Remark~\ref{rmq:squarefree}.
\item[f)] In the first definition by Buell \cite{buell77}, only $\delta_0$ is considered, and $f(0)$ is assumed to be square-free. The definition of primitive class as above was given by Soleng in \cite{soleng94} for elliptic curves, and in \cite{soleng11} for hyperelliptic curves.
\end{enumerate}
\end{rmq}

Using the tools developed in this paper, we shall now translate in our language, and give a new proof, of the main results of Buell \cite{buell77} and Soleng \cite{soleng11}, namely: the set of $n$-primitive points is a subgroup of $J(\Q)$, and the map $\delta_n$ is a morphism.

\begin{thm}
\label{thm:buellsoleng}
Let us fix an integer $n$, and let $\Sigma_n$ be the set of singular points of $\W$ which do not lie on the subscheme $T_n$. Then
\begin{enumerate}
\item[1)] A point $Q$ is $n$-primitive if and only if $N_Q$ belongs to the image of the (injective) map
$$
\Pic(\W\setminus\Sigma_n)\longrightarrow \Pic(C\setminus\{\infty\})\simeq J(\Q).
$$
In particular, the set of $n$-primitive points is a subgroup of $J(\Q)$, which contains $\J^0(\Z)$;
\item[2)] If $Q$ is $n$-primitive, and if $\mathcal{N}_Q$ is the unique line bundle on $\W\setminus\Sigma_n$ extending $N_Q$, then
\begin{equation}
\label{eq:delta_n}
\delta_n(Q)=T_n^*\mathcal{N}_Q.
\end{equation}
In particular, the map $\delta_n$ is well-defined, and is a morphism from the subgroup of $n$-primitive classes to $\Pic(\Z[\sqrt{f(n)}])$.
\end{enumerate}
\end{thm}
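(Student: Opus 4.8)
The plan is to transpose the machinery of Section~\ref{sec:2} and Section~\ref{sec:Quadratic}, which was developed for $\W$ itself, to the open subscheme $\W\setminus\Sigma_n$, and to identify the resulting extension condition with $n$-primitivity. First, exactly as in Lemma~\ref{restriction}, the restriction map $\Pic(\W\setminus\Sigma_n)\to\Pic(C\setminus\{\infty\})$ is injective: $\W\setminus\Sigma_n$ is normal with geometrically integral fibers (Lemma~\ref{Wlemma}), so a class dying on the generic fiber is a sum of whole fibers, hence principal over $\Z$. Since $\W\setminus\Sigma_n$ is moreover regular away from the finitely many singular points $s$ of $\W$ lying on $T_n$, the same local-to-global argument shows that a class $L\in\Pic(C\setminus\{\infty\})$ lies in the image if and only if, for each such $s$, the image of $L$ in $\Pic(U_s)$ vanishes, where $U_s=\Spec(\mathcal{O}_{\W,s})\setminus\{s\}$. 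Thus Part~1 reduces to matching these local obstructions with $n$-primitivity; the subgroup statement is then automatic (image of a group homomorphism), and the inclusion $\J^0(\Z)\subseteq\{n\text{-primitive points}\}$ follows from Lemma~\ref{L:ConCom} together with the fact that, for $Q\in\J^0(\Z)$, the line bundle $\mathcal{L}_Q$ on the minimal desingularisation has total degree zero on the (rational) exceptional curves above each $s\in T_n$ and therefore descends across those points.

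For the core equivalence, fix $Q$ and a representative $F=[A/e,2B/e,C/e]$ of $N_Q$ as in Lemma~\ref{altmumford}, so that $A,B,C\in\Z[x]$, $\cont(A)=1$ and $\gcd(e,\cont(B))=1$; set $\mathcal{I}_F=(A,ey-B)$, with horizontal zero locus $\mathcal{Z}_F\supseteq\overline{D_F}$, the two agreeing where $e$ is invertible (Lemma~\ref{lem:closure}). Restricting along $T_n$ gives $T_n^*\mathcal{I}_F=(A(n),e\sqrt{f(n)}-B(n))\Z[\sqrt{f(n)}]$, which is precisely $\delta_n(Q)$ in the notation of Definition~\ref{dfn:primitive}. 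Now the hypothesis $\gcd(A(n),2B(n),C(n))=1$ says exactly that $[A(n),2B(n),C(n)]$ is a primitive binary quadratic form over $\Z$ of discriminant $4e^2f(n)$, so Theorem~\ref{quadrep} (with $R=\Z$) makes $(A(n),e\sqrt{f(n)}-B(n))$ invertible in $\Z[e\sqrt{f(n)}]$, hence $T_n^*\mathcal{I}_F$ invertible in $\Z[\sqrt{f(n)}]$. One then shows $\mathcal{I}_F$ is invertible at every closed point of $T_n$, by lifting a local generator from $T_n$ via Nakayama (using that $x-n$ is a non-zero-divisor modulo $\mathcal{I}_F$ along $T_n$, since $\mathcal{Z}_F$ is horizontal and meets $T_n$ in a zero-dimensional subscheme); the invertibility locus of a coherent ideal being open, $\mathcal{I}_F$ is invertible on an open $V\supseteq T_n$. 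On $V$, $\mathcal{Z}_F$ is an effective Cartier divisor in the Cohen--Macaulay scheme $\W$, hence Cohen--Macaulay of pure codimension one and thus $\Z$-flat, so $\mathcal{Z}_F|_V=\overline{D_F}|_V$; therefore $\overline{D_F}$ is Cartier near $T_n$, its class is a line bundle $\mathcal{N}_Q$ on $\W\setminus\Sigma_n$ extending $N_Q$ (so $N_Q$ lies in the image), and $T_n^*\mathcal{N}_Q=T_n^*\overline{D_F}=[T_n^*\mathcal{I}_F]=\delta_n(Q)$, which is \eqref{eq:delta_n}.

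Conversely, if $N_Q$ extends to $\W\setminus\Sigma_n$, take again a representative $F$ as in Lemma~\ref{altmumford}; then $\mathcal{N}_Q=[\overline{D_F}]$ is Cartier along $T_n$, so $T_n^*\overline{D_F}$ is invertible in $\Z[\sqrt{f(n)}]$, and a local analysis at the primes $p\mid e$ — using that $\overline{D_F}$ and $\mathcal{Z}_F$ coincide away from such $p$, Lemma~\ref{lemideal}, and that an invertible ideal cannot share a factor with the conductor modulo such $p$ — forces $\gcd(A(n),2B(n),C(n))=1$, i.e.\ $Q$ is $n$-primitive in Soleng's sense. This completes the equivalence in Part~1; combined with the previous paragraph it shows $\delta_n(Q)=T_n^*\mathcal{N}_Q$ for every admissible $F$, so $\delta_n(Q)$ depends only on $Q$ (settling Remark~\ref{rmq:various}(b)), and $\delta_n$ is a homomorphism because $\mathcal{N}_{Q_1+Q_2}=\mathcal{N}_{Q_1}\otimes\mathcal{N}_{Q_2}$ by uniqueness of extensions while $T_n^*$ is a homomorphism. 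I expect the main obstacle to be precisely this converse direction together with the surrounding commutative algebra at the singular points of $\W$ lying on $T_n$: one must rule out embedded components of $\mathcal{I}_F=(A,ey-B)$ there, so that the Nakayama and flatness arguments go through, and keep careful track of the primes dividing the denominator $e$, where $\overline{D_F}$ and $\mathcal{Z}_F$ may genuinely differ. Everything else is a repackaging of Lemma~\ref{restriction}, Lemma~\ref{lem:closure}, Lemma~\ref{lemideal} and Theorem~\ref{quadrep}.
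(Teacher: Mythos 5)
Your outline diverges from the paper's proof in a way that leaves two genuine gaps. The first is a mismatch with Definition~\ref{dfn:primitive}: $n$-primitivity asks for the existence of \emph{some} representative $[A/e,2B/e,C/e]$ with $A,B,C\in\Z[x]$ satisfying \eqref{eq:primitive}, whereas you fix the Lemma~\ref{altmumford} representative and test \eqref{eq:primitive} on it. In the forward direction you therefore argue under a hypothesis stronger than ``$Q$ is $n$-primitive''; in the converse you claim the stronger (and unproved) statement that the Lemma~\ref{altmumford} representative itself satisfies \eqref{eq:primitive} --- the hint ``an invertible ideal cannot share a factor with the conductor'' is not an argument, and note that for $e>1$ the form $[A(n),2B(n),C(n)]$ has discriminant $4e^2f(n)$, so invertibility of the extended ideal in $\Z[\sqrt{f(n)}]$ does not translate back, via Theorem~\ref{quadrep}, into primitivity of that form. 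The paper does both steps differently: for the converse it exhibits a \emph{different} admissible representative, the integral one with $e=1$ (Lemma~\ref{intmumford}), for which $\mathcal{Z}_F=\overline{D_F}+V$ with $V$ vertical hence principal, so $T_n^*\mathcal{N}_Q$ is the class of $(A(n),\sqrt{f(n)}-B(n))$ and its invertibility yields a primitive form of discriminant $4f(n)$; for the forward direction it starts from an \emph{arbitrary} representative satisfying \eqref{eq:primitive} and first normalises it by Lemma~\ref{lem:soleng3.1} so that in addition $\gcd(A(n),e)=1$ (whence $\cont(A)=1$), checking that this does not change the ideal $(A(n),e\sqrt{f(n)}-B(n))$; this normalisation is also what makes $\delta_n$ independent of the chosen form, which your ``for every admissible $F$'' remark does not deliver since your argument only treats one representative.

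The second gap is the commutative algebra you yourself flag: the Nakayama lifting needs $x-n$ to be a non-zero-divisor modulo $\mathcal{I}_F$ at every point of $T_n\cap\mathcal{Z}_F$, i.e.\ that $\mathcal{O}_{\W}/\mathcal{I}_F$ has no associated (embedded or isolated) point on $T_n$. Such points live exactly over primes dividing $e$, where $\mathcal{Z}_F$ and $\overline{D_F}$ differ, and they are not excluded by \eqref{eq:primitive}: for instance $p\mid e$, $p^2\mid A(n)$, $p\mid B(n)$, $p\nmid C(n)$ is compatible with \eqref{eq:primitive}, with $\cont(A)=1$ and with $\gcd(e,\cont(B))=1$, and then $T_n$ does meet $\mathcal{Z}_F$ over $p$. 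You acknowledge the issue but give no mechanism to resolve it; the paper's mechanism is precisely Lemma~\ref{lem:soleng3.1}: after arranging $\gcd(A(n),e)=1$ one gets $T_n\cap\mathcal{Z}_F=\emptyset$ over every $p\mid e$, and over $p\nmid e$ condition \eqref{eq:primitive} (after possibly exchanging $A$ and $C$) shows $\overline{D_F}$ misses $T_n$ at the singular points of $\W$, so Cartier-ness near $T_n$ follows from regularity of $\W$ elsewhere and no delicate local algebra at singular points is needed. Finally, your justification of $\J^0(\Z)\subseteq\{n\text{-primitive points}\}$ is too quick: degree zero of $\mathcal{L}_Q$ on the exceptional curves of the minimal desingularisation does not by itself allow descent across a normal surface singularity --- one needs triviality on the exceptional fibre, which is automatic only for special (e.g.\ rational) singularities, so this containment requires a separate argument.
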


Thanks to the following statement \cite[Lemma~3.1]{soleng11}, that we recall for the reader's convenience, one may assume that $\gcd(A(n),e)=1$ when defining $\delta_n$.

\begin{lem}
\label{lem:soleng3.1}
Let $Q$ be a $n$-primitive point, and let $F=\left[\frac{A}{e},2\frac{B}{e},\frac{C}{e}\right]$ be a quadratic form as in Definition~\ref{dfn:primitive} whose class represents $N_Q$. Then there exists a quadratic form $\left[\frac{A'}{e},2\frac{B'}{e},\frac{C}{e}\right]$ as in Definition~\ref{dfn:primitive}, which is equivalent to $F$, such that $\gcd(A'(n),e)=1$. Moreover,
$$
(A(n),e\sqrt{f(n)}-B(n))\Z[\sqrt{f(n)}]=(A'(n),e\sqrt{f(n)}-B'(n))\Z[\sqrt{f(n)}].
$$
\end{lem}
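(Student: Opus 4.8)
The plan is to realise the passage from $F$ to $F'$ through the action of a single elementary matrix of $\GL_2(\Z)$, chosen so that it leaves the last coefficient $C$ untouched and only modifies the leading one, and then to fix the integer parameter by a congruence argument relying on the $n$-primitivity of $Q$. Concretely, write $F=\left[\frac{A}{e},2\frac{B}{e},\frac{C}{e}\right]$ and, for $k\in\Z$, let $g_k=\left(\begin{smallmatrix}1 & 0\\ k & 1\end{smallmatrix}\right)\in\GL_2(\Z)$. A direct computation of the action from Definition~\ref{Qdfn} gives
$$
(g_k,1).F=\left[\frac{A+2Bk+Ck^{2}}{e},2\frac{B+Ck}{e},\frac{C}{e}\right],
$$
so that $A':=A+2Bk+Ck^{2}$ and $B':=B+Ck$ lie in $\Z[x]$, the denominator $e$ and the polynomial $C$ are unchanged, and $(g_k,1).F$ still represents $N_Q$ (equivalent forms define the same class in $\Pic(C\setminus\{\infty\})$). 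Thus everything reduces to a suitable choice of $k$.

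First I would produce $k$ with $\gcd(A'(n),e)=1$, i.e. such that $A(n)+2B(n)k+C(n)k^{2}$ is prime to every prime $p\mid e$. Fix such a $p$. Since $Q$ is $n$-primitive we have $\gcd(A(n),2B(n),C(n))=1$, so $p$ cannot divide all of $A(n)$, $2B(n)$, $C(n)$. One then checks that $k\mapsto A(n)+2B(n)k+C(n)k^{2}\bmod p$ is a nonzero polynomial of degree at most $2$: degree $2$ when $p\nmid C(n)$; degree $1$ when $p\mid C(n)$ but $p\nmid 2B(n)$; a nonzero constant when $p\mid C(n)$ and $p\mid 2B(n)$ (which forces $p\nmid A(n)$). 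In every case its set of roots modulo $p$ is a proper subset of $\Z/p\Z$ — when $p=2$ the quadratic case collapses to $k^{2}+A(n)\equiv k+A(n)\bmod 2$, which vanishes on a single class. By the Chinese remainder theorem one can pick $k$ lying outside all these excluded classes simultaneously, and then $\gcd(A'(n),e)=1$.

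Next I would check that $F_k:=(g_k,1).F$ is again of the shape allowed in Definition~\ref{dfn:primitive}. We have $A',B',C'=C\in\Z[x]$ and $e>0$ unchanged, and the relations $C(n)=C'(n)$, $2B(n)=2B'(n)-2kC'(n)$, $A(n)=A'(n)-2kB'(n)+k^{2}C'(n)$ (together with the symmetric expressions for the primed quantities in terms of the unprimed ones) show that $(A(n),2B(n),C(n))=(A'(n),2B'(n),C'(n))$ as ideals of $\Z$; hence $\gcd(A'(n),2B'(n),C'(n))=1$, i.e. $Q$ is still $n$-primitive through $F_k$. Taking $A',B'$ as above settles the first assertion of the lemma.

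It remains to compare the two ideals. The point is that $[A(n),2B(n),C(n)]$ and $[A'(n),2B'(n),C'(n)]$ are binary quadratic forms over $\Z$ of discriminant $4e^{2}f(n)$ which are $\GL_2(\Z)$-equivalent, via $g_k$, once the common scalar $\tfrac1e$ is cleared; hence, by Theorem~\ref{quadrep} applied with $R=\Z$ and $D=e^{2}f(n)$, their associated ideals $(A(n),e\sqrt{f(n)}-B(n))$ and $(A'(n),e\sqrt{f(n)}-B'(n))$ define the same class in $\Pic(\Z[e\sqrt{f(n)}])$, and applying the natural morphism $\Pic(\Z[e\sqrt{f(n)}])\to\Pic(\Z[\sqrt{f(n)}])$ gives the equality claimed in the statement. (If one insists on literal equality of ideals rather than of classes, one would instead pin down both sides by feeding the coprimality $\gcd(A'(n),e)=1$ into the explicit description of the extended ideals furnished by Lemma~\ref{lemideal} — this is the only place where that hypothesis is genuinely used, and is where the bookkeeping is heaviest.) The main obstacle is therefore not a single deep point but the case analysis of the second step, in particular treating $p=2$ correctly; the remainder is formal manipulation of the form–ideal dictionary.
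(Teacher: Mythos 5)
Your construction of the equivalent form is correct and is essentially the paper's own argument for that half of the lemma: both proofs apply a substitution of the shape $(X,Y)\mapsto (X,kX+Y)$, which leaves $C$ and $e$ untouched and replaces $A$ by $A+2kB+k^2C$, $B$ by $B+kC$, and then choose the shift $k$ so that $A'(n)$ becomes prime to $e$. The paper takes $k=e_1$, where $e=e_1e_2$ with $\gcd(e_1,e_2)=\gcd(A(n),e_1)=\gcd(C(n),e_2)=1$ and every prime of $e_2$ dividing $A(n)$ (hence $B(n)$); you instead pick $k$ by the Chinese remainder theorem, avoiding for each $p\mid e$ the zeros of $A(n)+2kB(n)+k^2C(n)$ modulo $p$, and your case analysis (including the collapse $k^2\equiv k$ at $p=2$, and the fact that $p\mid C(n)$, $p\mid 2B(n)$ forces $p\nmid A(n)$ by $n$-primitivity) is complete. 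Your check that the new form is still as in Definition~\ref{dfn:primitive}, via the equality of ideals $(A(n),2B(n),C(n))=(A'(n),2B'(n),C(n))$ in $\Z$, is also fine.

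The gap is in the ``Moreover''. What your third step proves, via Theorem~\ref{quadrep} over $\Z$ and functoriality of $\Pic$ along $\Z[e\sqrt{f(n)}]\subset\Z[\sqrt{f(n)}]$, is that the two ideals have the same \emph{class} in $\Pic(\Z[\sqrt{f(n)}])$; this is indeed the operative content (it is what the well-definedness of $\delta_n$ in Remark~\ref{rmq:various}~a) requires), but it is not the displayed equality of ideals, and your parenthetical plan to upgrade it by ``bookkeeping with Lemma~\ref{lemideal}'' cannot succeed for an arbitrary admissible $k$: by Lemma~\ref{lemideal} the left-hand ideal has norm $|A(n)|$ or $d\,|A(n)|$ with $d=\gcd(A(n),e,B(n))$, whereas the right-hand ideal, $A'(n)$ being prime to $e$, has norm $|A'(n)|$, and these numbers generally differ. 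Concretely, take $f=x^3-3$, $e=2$, $A=4$, $B=2x$, $C=3+x^2-x^3$ (so $B^2-AC=4f$) and $n=0$: then $\gcd(A(0),2B(0),C(0))=\gcd(4,0,3)=1$, the left side is $(4,2\sqrt{-3})\,\Z[\sqrt{-3}]=(2)$, of norm $4$, while the admissible choice $k=1$ (which is also the value $e_1$ produced by the paper's decomposition, since here $e_1=1$, $e_2=2$) gives $A'(0)=7$, $B'(0)=3$ and the ideal $(7,2\sqrt{-3}-3)\,\Z[\sqrt{-3}]$, of norm $7$; the two classes agree (both are principal), the ideals do not. So either the displayed equality is to be read in $\Pic(\Z[\sqrt{f(n)}])$ --- in which case your argument does prove it, and note that the paper's own recalled proof likewise only establishes the coprimality claim --- or literal ideal equality would require a much more careful choice of transformation than mere coprimality of $A'(n)$ with $e$; in any case the ``only bookkeeping remains'' framing at the end of your proposal understates where the real difficulty of the statement sits.
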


\begin{proof}
According to Remark~\ref{rmq:various}, $\gcd(A(n),e,C(n))=1$. Let us write $e=e_1e_2$ where $\gcd(e_1,e_2)=\gcd(A(n),e_1)=\gcd(C(n),e_2)=1$. Up to enlarging $e_1$, we may assume that all prime factors of $e_2$ divide $A(n)$. Since $B(n)^2-A(n)C(n)=e^2f(n)$, all prime factors of $e_2$ divide also $B(n)$.

We have
$$
\left[\frac{A}{e},2\frac{B}{e},\frac{C}{e}\right]\sim \left[\frac{A+2e_1B+e_1^2C}{e},2\frac{B+e_1C}{e},\frac{C}{e}\right],
$$
and we claim that $\gcd(A(n)+2e_1B(n)+e_1^2C(n),e)=1$. Indeed,
$$
\gcd(A(n)+2e_1B(n)+e_1^2C(n),e_1)=\gcd(A(n),e_1)=1.
$$
On the other hand, if a prime $p$ divides $e_2$, then it also divides $A(n)$ and $B(n)$ by construction. If in addition $p$ divides $A(n)+2e_1B(n)+e_1^2C(n)$, then $p$ divides $e_1C(n)$. But $p$ cannot divide $e_1$ nor $C(n)$, because $e_2$ is coprime to these numbers. It follows that
$$
\gcd(A(n)+2e_1B(n)+e_1^2C(n),e_2)=1,
$$
hence the result.
\end{proof}

\begin{proof}[Proof of Theorem~\ref{thm:buellsoleng}]
According to Lemma~\ref{lem:soleng3.1}, if $Q$ is $n$-primitive, then there exists a quadratic form $F=\left[\frac{A}{e},2\frac{B}{e},\frac{C}{e}\right]$ as in Definition~\ref{dfn:primitive} whose class represents $N_Q$, and such that $\gcd(A(n),e)=1$. As Soleng already pointed out \cite[Proof of 3.4]{soleng11}, this implies that $\cont(A)=1$. Indeed, if some prime $p$ divides $\cont(A)$, then $B^2\equiv e^2f \pmod{p}$. But $e$ is coprime to $A(n)$, hence to $\cont(A)$, and in particular $e$ is invertible mod $p$. It follows that the reduction of $f$ modulo $p$ is the square of a polynomial in $\mathbb{F}_p[x]$, which is impossible since its degree is odd.

Let $\mathcal{Z}_F\subset \W$ be the closed subscheme defined by the ideal
$$
\mathcal{I}_F:=(A,ey-B).
$$
The generic fiber of $\mathcal{Z}_F$ is the divisor $D_F$ corresponding to the quadratic form $F$. The same argument as in the proof of Lemma~\ref{lem:closure} shows that
\begin{equation}
\overline{D_F}\subseteq \mathcal{Z}_F
\end{equation}
and these two subschemes agree on the open subset of $\W$ over which $e$ is invertible.

On the other hand, because $A(n)$ is coprime to $e$, the closed subschemes $T_n$ and $\mathcal{Z}_F$ have empty intersection above places dividing $e$. Thus, $T_n$ and $\overline{D_F}$ have empty intersection above primes dividing $e$. At a prime $p$ not dividing $e$, the condition \eqref{eq:primitive} ensures us that $\mathcal{Z}_F$, hence $\overline{D_F}$, is a divisor in the neighbourhood of each point in $T_n\cap \W_{\mathbb{F}_p}$. Let us check this more carefully: if $p=2$ then, because $\gcd(A(n),2B(n),C(n))=1$, we may assume (up to exchanging $A$ and $C$) that $A(n)$ is odd, in which case $T_n$ does not intersect $\overline{D_F}$. Let us now assume that $p\neq 2$, then we consider two cases: if $p$ does not divide $f(n)$, then $T_n$ does not meet the singular locus of $\W_{\mathbb{F}_p}$, hence $\overline{D_F}$ is a divisor in the neighbourhood of $T_n\cap \W_{\mathbb{F}_p}$. If $p$ divides $f(n)$, then $p$ cannot divide both $A(n)$ and $C(n)$ because it would then divide $B(n)$, which is impossible by \eqref{eq:primitive}; therefore, up to exchanging $A$ and $C$, we may assume that $p$ does not divide $A(n)$, which implies that $\overline{D_F}$ does not meet $T_n$ above $p$, hence the result.

Now, going through the proof of Lemma~\ref{restriction}, one see that $N_Q$ extends into a line bundle on $\W\setminus\Sigma_n$ if and only if $\overline{D_F}$  is a divisor on $\W\setminus\Sigma_n$. Therefore, according to the reasoning above, if $Q$ is $n$-primitive then $N_Q$ (uniquely) extends into a line bundle $\mathcal{N}_Q$ on $\W\setminus\Sigma_n$, and we have
$$
\delta_n(Q)=T_n^*\mathcal{N}_Q,
$$
which proves that $\delta_n$ is well-defined, and is a group morphism.

It remains to prove that, if $N_Q$ extends into a line bundle $\mathcal{N}_Q$ on $\W\setminus\Sigma_n$---or equivalently, if $\overline{D_F}$ is a divisor on $\W\setminus\Sigma_n$---then $Q$ is $n$-primitive. Let us pick a quadratic form $F=[A,2B,C]$ with discriminant $4f$ representing $N_Q$, where $A$, $B$ and $C$ belong to $\Z[x]$. Then one checks that $\mathcal{Z}_F=\overline{D_F}+V$ where $V$ is some vertical divisor (if $\cont(A)=1$ then $V=\emptyset$). If $\overline{D_F}$ is a divisor on $\W\setminus\Sigma_n$, then $\mathcal{Z}_F$ is also a divisor, and because vertical divisors are principal, $\mathcal{N}_Q$ is the class of $\mathcal{Z}_F$ in the Picard group of $\W\setminus\Sigma_n$. So $T_n^*\mathcal{N}_Q$ is the ideal class of 
$$
T_n^*\mathcal{I}_F=(A(n),\sqrt{f(n)}-B(n)).
$$

This proves that the quadratic form $[A(n),2B(n),C(n)]$, with discriminant $4f(n)$, is primitive because its corresponding ideal is invertible.
\end{proof}

The statement below is a generalisation of the result of Buell and Call \cite[Theorem~3.14]{bc16} to hyperelliptic curves. Under good reduction properties relative to the model $\W$, it allows one to compute values of the class group pairing in terms of quadratic forms.

\begin{cor}
Let $n$ be an integer, and let $Q\in J(\Q)$ such that, for each prime $p$, either $T_n$ reduces in the smooth locus of $\W_{\mathbb{F}_p}$, or $N_Q$ extends into a line bundle in the neighbourhood of each singular point of $\W_{\mathbb{F}_p}$. Then:
\begin{enumerate}
\item[1)] $Q$ is $n$-primitive
\item[2)] $(P_n,Q)$  is a good reduction pair
\item[3)] $\langle P_n,Q \rangle^{\cl}$ is the image of $\delta_n(Q)$ by the natural map $\Pic(\Z[\sqrt{f(n)}])\rightarrow \Pic(\mathcal{O}_{\Q\left(\sqrt{f(n)}\right)})$.
\end{enumerate}
\end{cor}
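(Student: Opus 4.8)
The plan is to establish the three assertions in turn, leaning on Theorem~\ref{thm:buellsoleng} and on the comparison results of Section~\ref{sec:2}; throughout, $p$ denotes a prime of $\Q$ and $w$ a place of $K:=\Q(\sqrt{f(n)})$ above $p$. For \textbf{(1)}: by Theorem~\ref{thm:buellsoleng}(1), $Q$ is $n$-primitive precisely when $N_Q$ extends to a line bundle on $\W\setminus\Sigma_n$, where $\Sigma_n$ is the set of singular points of $\W$ not lying on $T_n$. The proof of Lemma~\ref{restriction} shows $N_Q$ already extends uniquely over the regular locus $\W\setminus\Sigma$, so it suffices to extend it across each singular point $s$ of $\W$ that \emph{does} lie on $T_n$. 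Such an $s$, in the fibre over some $p$, is a singular — hence non-smooth — point of $\W_{\mathbb{F}_p}$ belonging to $T_n\cap\W_{\mathbb{F}_p}$; therefore the first alternative of the hypothesis fails at $p$, so the second holds and $N_Q$ extends near $s$. These local extensions agree on overlaps with the extension over $\W\setminus\Sigma$ (uniqueness of extensions, $\W$ being normal and integral) and glue to the desired $\mathcal{N}_Q\in\Pic(\W\setminus\Sigma_n)$; Theorem~\ref{thm:buellsoleng}(2) then gives $\delta_n(Q)=T_n^*\mathcal{N}_Q$.

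For \textbf{(2)}, I would check the good reduction pair condition one place $w\mid p$ at a time, using that the reduction of $P_n$ at $w$ lies over the reduction of $T_n$ at $p$, so its position is governed by the fibre $\W_{\mathbb{F}_p}$. If $T_n$ reduces into the smooth locus of $\W_{\mathbb{F}_p}$, then $P_n$ reduces into the smooth locus of $\overline\W$ at $w$, which is not blown up by the minimal desingularisation, so $P_n$ specialises into the strict transform of the special fibre of $\overline\W$ — the same component into which $\infty$ specialises, as in the proof of Lemma~\ref{L:ConCom}. Hence $\overline{P_n}-\overline\infty$ has partial degree zero on every component of the fibre at $w$ of the regular model, so by Raynaud's isomorphism \eqref{eq:raynaud} over $\mathcal{O}_{K,w}$ the point $\iota(P_n)$ lies in $\J^0_w$. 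If instead $N_Q$ extends near every singular point of $\W_{\mathbb{F}_p}$, then running the gluing of (1) over $\mathcal{O}_{K,w}$ shows $N_Q$, hence $L_Q$, extends over the projective model over $\mathcal{O}_{K,w}$, and the moving-lemma argument of Lemma~\ref{L:ConCom} shows its pullback to the regular model has partial degree zero on each component of the fibre at $w$, so $Q$ lies in $\J^0_w$. In either case one of $\iota(P_n),Q$ has good reduction at $w$, so $(P_n,Q)$ is a good reduction pair.

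For \textbf{(3)}: since $\Sigma_n$ misses $T_n$, the section $P_n\colon\Spec\mathcal{O}_K\to\W$ obtained by normalising $T_n$ factors through $\W\setminus\Sigma_n$, so the image of $\delta_n(Q)=T_n^*\mathcal{N}_Q$ under $\Pic(\Z[\sqrt{f(n)}])\to\Pic(\mathcal{O}_K)$ equals $P_n^*\mathcal{N}_Q$. It remains to identify $P_n^*\mathcal{N}_Q$ with $\langle P_n,Q\rangle^{\cl}$; this is the analogue of Corollary~\ref{C:pairing} with $\W$ replaced by $\W\setminus\Sigma_n$ and with the hypothesis $Q\in\J^0(\Z)$ weakened to the good reduction pair property of (2). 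I would prove it by re-running the proof of Lemma~\ref{lem:classOK}(2) with $\Poinc^{1,2}$ in place of $\Poinc^0$: decompose $S=S_1\sqcup S_2$ according to which alternative holds at each bad place (first kind into $S_2$, the rest into $S_1$), so that $\Poinc^{1,2}$ is glued along $\J^0\times\J^0$ from $\Poinc^0_\mathrm{left}$ over $S_2$ and $\Poinc^0_\mathrm{right}$ over $S_1$. At the places of $S_1$ one has $Q\in\J^0$ locally and $\mathcal{N}_Q$ extends over the whole local model, so the local identification of the pullback with $P_n^*\mathcal{N}_Q$ is exactly Corollary~\ref{C:pairing}; at the places of $S_2$ one uses $\iota(P_n)\in\J^0_w$ and the compatibility \eqref{eq:poincOK} in its form for $\Poinc^0_\mathrm{left}$, noting that $\mathcal{N}_Q$ is there supported on the strict-transform component. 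Gluing the two local pictures matches $(\iota(P_n)\times Q)^*\Poinc^{1,2}=\langle P_n,Q\rangle^{\cl}$ with $P_n^*\mathcal{N}_Q$, hence with the image of $\delta_n(Q)$.

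The main obstacle is step \textbf{(3)}: upgrading Corollary~\ref{C:pairing} — which identifies the class group pairing with a pullback of a line bundle on $\W$ only under everywhere good reduction of $Q$ — to the genuine good reduction pair case. One must keep careful track of the biextension structure underlying $\Poinc^{1,2}$ and of how the partial model $\W\setminus\Sigma_n$, its minimal desingularisation, and the component groups of the bad fibres interact; the two local analyses are routine once set up, but gluing them compatibly with the biextension — i.e.\ checking that the ideal class $\delta_n(Q)$ produced by the quadratic form underlies $\Poinc^{1,2}$ — is the delicate step.
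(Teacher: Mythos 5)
Your proposal is correct and follows essentially the same route as the paper: part (1) via the characterisation of $n$-primitivity in Theorem~\ref{thm:buellsoleng}, part (2) place by place (you use the partial-degree-zero/Raynaud argument where the paper simply invokes the N{\'e}ron mapping property of $\overline{\W}^{\mathrm{sm}}\to\J^0$, a harmless variant), and part (3) by gluing the $\Poinc^0_{\mathrm{right}}$-comparison at the $S_1$-places with the $\Poinc^0_{\mathrm{left}}$-comparison at the $S_2$-places to identify $P_n^*\mathcal{N}_Q$ with $(\iota(P_n)\times Q)^*\Poinc^{1,2}=\langle P_n,Q\rangle^{\cl}$. The gluing you flag as delicate is dispatched in the paper by the identity $\mathcal{M}_Q=(\omega^{\mathrm{sm}}\times Q)^*\Poinc^0_{\mathrm{left}}$ on $\overline{\W}^{\mathrm{sm}}$ (both sides are rigidified and agree on the generic fibre) together with an auxiliary open subscheme $\X^1$ of the minimal desingularisation, with full fibres at $S_1$-places and $\overline{\W}$-type fibres elsewhere, through which $P_n$ factors, so your outline matches the paper's argument in substance.
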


Let us note that, if $f(n)$ is square-free, then $T_n$ belongs to the smooth locus of $\W\to\Spec(\Z)$, hence the condition above is automatically satisfied. We note that previous comparison results \cite{call86} and \cite{bc16} were obtained under the assumption that $f(n)$ is square-free. Thus, even in the case when $C$ is an elliptic curve, the statement above improves on previous results.

\begin{proof}
1) Obvious by the very definition of $n$-primitive point.

2) Let $\overline{\W}^\mathrm{sm}$ be the smooth locus of $\overline{\W}$. By the universal property of the N{\'e}ron model, the embedding $\iota:C\to J$ extends uniquely into a map $\overline{\W}^\mathrm{sm}\to\J$. This map factors through $\J^0$, because the fibers of $\overline{\W}^\mathrm{sm}$ are connected. On the other hand, if $N_Q$ extends into a line bundle on $\W$ then $Q$ belongs to $\J^0(\Z)$ according to Corollary~\ref{C:pairing}. Doing this reasoning fiber by fiber proves that $(P_n,Q)$ is a good reduction pair.

3) Here we work over $\Spec(\mathcal{O}_K)$, but for simplicity we still denote by $\W$, $\J$, etc. the integral models over $\Spec(\mathcal{O}_K)$. Let $\omega^\mathrm{sm}:\overline{\W}^\mathrm{sm}\to\J^0$ be the unique map extending $\iota:C\to J$.
Using the same notation as in \S{}\ref{subsec:MT}, we denote by $\Poinc^0_\mathrm{left}$ the Poincar{\'e} line bundle on $\J^0\times \J$. Then, for each $Q\in J(\Q)$, the unique line bundle $\mathcal{M}_Q$ on $\overline{\W}^\mathrm{sm}$ extending $L_Q$ satisfies
$$
\mathcal{M}_Q=(\omega^\mathrm{sm}\times Q)^*\Poinc^0_\mathrm{left}
$$
because these two line bundles are rigidified, and coincide on the generic fiber.

On the other hand, if $N_Q$ extends into a line bundle on $\W$, then $L_Q$ extends into a line bundle $\mathcal{M}_Q$ on $\overline{\W}$, hence according to Lemma~\ref{L:ConCom} the point $Q$ belongs to $\J^0(\Z)$. Let $h:\X\to \overline{\W}$ be the minimal desingularisation of $\overline{\W}$. It follows from  Lemma~\ref{L:ConCom} and Lemma~\ref{lem:classOK} that
$$
h^*\mathcal{M}_Q|_{\X^\mathrm{sm}}=(\iota^\mathrm{sm}\times Q)^*\Poinc^0_\mathrm{right}
$$
where $\iota^\mathrm{sm}:\X^\mathrm{sm}\to\J$ is the unique map extending $\iota:C\to J$.

Let $S_1$ be the set of places at which $N_Q$ extends into a line bundle in the neighbourhood of each singular point of the fiber of $\W$ at $v$, and let $S_2$ be the complement of $S_1$ in the set of places of bad reduction of $\J$.

Let $\X^1$ be the open subscheme of $\X$ whose special fiber at $v$ is $\X$ for $v\in S_1$, and $\overline{\W}$ otherwise. Let $h^1:\X^1\rightarrow\overline{\W}$ be the restriction of $h$, and let $\iota^{1,\mathrm{sm}}:\X^{1,\mathrm{sm}}\to \J^1$ be the map extending $\iota$. Then it follows from the definition of $\Poinc^{1,2}$ and the reasoning above that
$$
(h^1)^*\mathcal{M}_Q|_{\X^{1,\mathrm{sm}}}=(\iota^{1,\mathrm{sm}}\times Q)^*\Poinc^{1,2}.
$$

On the other hand, it follows from our assumptions that $P_n$ extends into a section of $\X^{1,\mathrm{sm}}$. Therefore, we can compute $P_n^*\mathcal{M}_Q$ after pulling-back $\mathcal{M}_Q$ by the map $h^1$, which yields, according to the relation above,
$$
P_n^*\mathcal{M}_Q=(\iota(P_n)\times Q)^*\Poinc^{1,2}=\langle P_n,Q \rangle^{\cl}.
$$

Finally, $P_n$ being the generic fiber of $T_n$ it gives rise to an integral section of $\W$, hence
$$
P_n^*\mathcal{M}_Q=P_n^*\mathcal{N}_Q,
$$
and the result follows by \eqref{eq:delta_n}.
\end{proof}



\bibliographystyle{amsalpha}
\bibliography{biblio.bib}

\providecommand{\bysame}{\leavevmode\hbox to3em{\hrulefill}\thinspace}
\providecommand{\MR}{\relax\ifhmode\unskip\space\fi MR }
\providecommand{\MRhref}[2]{%
  \href{http://www.ams.org/mathscinet-getitem?mr=#1}{#2}
}
\providecommand{\href}[2]{#2}
\begin{thebibliography}{Mum07}

\bibitem[AP00]{ap00}
Adebisi Agboola and George Pappas, \emph{Line bundles, rational points and
  ideal classes}, Math. Res. Lett. \textbf{7} (2000), no.~5-6, 709--717.
  \MR{1809295}

\bibitem[BB16]{BB16}
Andrew~R. Booker and Timothy~D. Browning, \emph{Square-free values of reducible
  polynomials}, Discrete Anal. (2016), Paper No. 8, 16. \MR{3533307}

\bibitem[BC16]{bc16}
Duncan~A. Buell and Gregory~S. Call, \emph{Class pairings and isogenies on
  elliptic curves}, J. Number Theory \textbf{167} (2016), 31--73. \MR{3504033}

\bibitem[Bha04]{bhargavaII}
Manjul Bhargava, \emph{Higher composition laws. {II}. {O}n cubic analogues of
  {G}auss composition}, Ann. of Math. (2) \textbf{159} (2004), no.~2, 865--886.
  \MR{2081442}

\bibitem[BLR90]{NeronModels}
Siegfried Bosch, Werner L\"utkebohmert, and Michel Raynaud, \emph{N\'eron
  models}, Ergebnisse der Mathematik und ihrer Grenzgebiete (3), vol.~21,
  Springer-Verlag, Berlin, 1990. \MR{1045822}

\bibitem[Bue76]{buell76}
Duncan~A. Buell, \emph{Class groups of quadratic fields}, Math. Comp.
  \textbf{30} (1976), no.~135, 610--623. \MR{0404205}

\bibitem[Bue77]{buell77}
\bysame, \emph{Elliptic curves and class groups of quadratic fields}, J. London
  Math. Soc. (2) \textbf{15} (1977), no.~1, 19--25. \MR{0434949}

\bibitem[Cal86]{call86}
Gregory~Scott Call, \emph{Local heights on families of {A}belian varieties},
  ProQuest LLC, Ann Arbor, MI, 1986, Thesis (Ph.D.)--Harvard University.
  \MR{2634990}

\bibitem[Can87]{cantor}
David~G. Cantor, \emph{Computing in the {J}acobian of a hyperelliptic curve},
  Math. Comp. \textbf{48} (1987), no.~177, 95--101. \MR{866101}

\bibitem[Cox89]{cox89}
David~A. Cox, \emph{Primes of the form {$x^2 + ny^2$}}, A Wiley-Interscience
  Publication, John Wiley \& Sons, Inc., New York, 1989, Fermat, class field
  theory and complex multiplication. \MR{1028322}

\bibitem[GL12]{gl}
Jean Gillibert and Aaron Levin, \emph{Pulling back torsion line bundles to
  ideal classes}, Math. Res. Lett. \textbf{19} (2012), no.~6, 1171--1184.
  \MR{3091601}

\bibitem[GLL13]{GLL13}
Ofer Gabber, Qing Liu, and Dino Lorenzini, \emph{The index of an algebraic
  variety}, Invent. Math. \textbf{192} (2013), no.~3, 567--626. \MR{3049930}

\bibitem[Gra98]{granville98}
Andrew Granville, \emph{{$ABC$} allows us to count squarefrees}, Internat.
  Math. Res. Notices (1998), no.~19, 991--1009. \MR{1654759}

\bibitem[Gro72]{gro7}
Alexander Grothendieck, \emph{Groupes de monodromie en g\'eom\'etrie
  alg\'ebrique. {I}}, Lecture Notes in Mathematics, Vol. 288, Springer-Verlag,
  Berlin, 1972, S{\'e}minaire de G{\'e}om{\'e}trie Alg{\'e}brique du Bois-Marie
  1967--1969 (SGA 7 I), Dirig{\'e} par A. Grothendieck. Avec la collaboration
  de M. Raynaud et D. S. Rim.

\bibitem[Hoo68]{hooley68}
Christopher Hooley, \emph{On the square-free values of cubic polynomials}, J.
  Reine Angew. Math. \textbf{229} (1968), 147--154. \MR{0220686}

\bibitem[Kne82]{kneser}
Martin Kneser, \emph{Composition of binary quadratic forms}, J. Number Theory
  \textbf{15} (1982), no.~3, 406--413. \MR{680541}

\bibitem[Liu96]{liu96}
Qing Liu, \emph{Mod\`eles entiers des courbes hyperelliptiques sur un corps de
  valuation discr\`ete}, Trans. Amer. Math. Soc. \textbf{348} (1996), no.~11,
  4577--4610. \MR{1363944}

\bibitem[MB89]{lmb89}
Laurent Moret-Bailly, \emph{Groupes de {P}icard et probl\`emes de {S}kolem.
  {I}}, Ann. Sci. \'Ecole Norm. Sup. (4) \textbf{22} (1989), no.~2, 161--179.
  \MR{1005158}

\bibitem[MT83]{mt}
Barry Mazur and John Tate, \emph{Canonical height pairings via biextensions},
  Arithmetic and geometry, {V}ol. {I}, Progr. Math., vol.~35, Birkh\"auser
  Boston, Boston, MA, 1983, pp.~195--237. \MR{717595}

\bibitem[Mum07]{tata2}
David Mumford, \emph{Tata lectures on theta {II}}, Modern Birkh\"auser
  Classics, Birkh\"auser Boston, Inc., Boston, MA, 2007, Jacobian theta
  functions and differential equations, With the collaboration of C. Musili, M.
  Nori, E. Previato, M. Stillman and H. Umemura, Reprint of the 1984 original.
  \MR{2307768}

\bibitem[P{\'o}l15]{polya15}
George P{\'o}lya, \emph{{\"U}ber ganzwertige ganze {F}unktionen}, Rend. Circ.
  Mat. Palermo \textbf{40} (1915), no.~1, 1--16.

\bibitem[Ses58]{seshadri58}
Conjeevaram~Srirangachari Seshadri, \emph{Triviality of vector bundles over the
  affine space {$K^{2}$}}, Proc. Nat. Acad. Sci. U.S.A. \textbf{44} (1958),
  456--458. \MR{0102527}

\bibitem[Sol94]{soleng94}
Ragnar Soleng, \emph{Homomorphisms from the group of rational points on
  elliptic curves to class groups of quadratic number fields}, J. Number Theory
  \textbf{46} (1994), no.~2, 214--229. \MR{1269253}

\bibitem[SS11]{soleng11}
Tormod~Kalberg Sivertsen and Ragnar Soleng, \emph{Hyperelliptic curves and
  homomorphisms to ideal class groups of quadratic number fields}, J. Number
  Theory \textbf{131} (2011), no.~12, 2303--2309. \MR{2832825}

\bibitem[Woo11]{wood11}
Melanie~Matchett Wood, \emph{Gauss composition over an arbitrary base}, Adv.
  Math. \textbf{226} (2011), no.~2, 1756--1771. \MR{2737799}

\bibitem[Woo14]{wood14}
\bysame, \emph{Parametrization of ideal classes in rings associated to binary
  forms}, J. Reine Angew. Math. \textbf{689} (2014), 169--199. \MR{3187931}

\end{thebibliography}



\bigskip

\textsc{Jean Gillibert}, Institut de Math{\'e}matiques de Toulouse, CNRS UMR 5219, 118 route de Narbonne, 31062 Toulouse Cedex 9, France.

\emph{E-mail address:} \texttt{jean.gillibert@math.univ-toulouse.fr}


\end{document}